\newcommand\Humlaut[1]{\stackengine{-.05ex}{#1}{\hstretch{.8}{\vstretch{.65}{%
  \mkern1mu\scriptscriptstyle''}}}{O}{c}{F}{F}{S}}
\let\@fnsymbol\@alph
\newcommand{\K}{{\mathbb K}}
\newcommand{\Q}{{\mathbb Q}}
\newcommand{\Z}{{\mathbb Z}}
\newcommand{\calA}{\mathcal{A}}
\newcommand{\calD}{\mathcal{D}}
\newcommand{\calO}{\mathcal{O}}
\newcommand{\calQ}{\mathcal{Q}}
\newcommand{\calZ}{\mathcal{Z}}
\newcommand\eps\varepsilon
\newcommand\ph\varphi
\newcommand{\Gal}{\operatorname{Gal}}
\newtheorem{theorem}{Theorem}[section]
\newtheorem{proposition}[theorem]{Proposition}
\newtheorem{corollary}[theorem]{Corollary}
\newtheorem{lemma}[theorem]{Lemma}
\newtheorem{remark}[theorem]{Remark}
\newtheorem{conjecture}[theorem]{Conjecture}
\numberwithin{equation}{section}
\newcounter{jump}
\title{On the $p$-adic zeros of the Tribonacci sequence}
\author{Yuri Bilu\\
IMB, Université de Bordeaux \& CNRS\\
E-mail: yuri@math.u-bordeaux.fr,
\and
Florian Luca\\
School of Maths, Wits, South Africa\\
Research Group in Algebraic Structures and Applications,\\ 
King Abdulaziz University, Saudi Arabia\\
CCM, UNAM, Morelia, Mexico\\
E-mail: Florian.Luca@wits.ac.za,
\and
Joris Nieuwveld and J\"oel Ouaknine\\
Max Planck Institut for Software Systems, Germany\\
E:mail jnieuwve@mpi-sws.org; joel@mpi-sws.org,
\and
James Worrell\\
Department of Computer Science, Oxford University, UK\\
jbw@cs.ox.ac.uk
}
\renewcommand*\l@section[2]{%
  \ifnum \c@tocdepth >\z@
    \addpenalty\@secpenalty
    \addvspace{0.2em \@plus\p@}%
    \setlength\@tempdima{1.5em}%
    \begingroup
      \parindent \z@ \rightskip \@pnumwidth
      \parfillskip -\@pnumwidth
      \leavevmode \bfseries
      \advance\leftskip\@tempdima
      \hskip -\leftskip
      #1\nobreak\hfil \nobreak\hb@xt@\@pnumwidth{\hss #2}\par
    \endgroup
  \fi}
\begin{document}

\hfuzz 6pt


\maketitle

{\footnotesize

\tableofcontents

}

\section{Introduction}
Let ${\Lambda=\{\lambda_1,\lambda_2,\lambda_3\}\subset \overline\Q}$ be the set of roots of the polynomial 
$$
P(X)=X^3-X^2-X-1.
$$
For ${\lambda \in \Lambda}$ define ${c_\lambda=\lambda P'(\lambda)^{-1}}$.  
For ${n\in \Z}$, the \textit{Tribonacci number} ${T(n)\in \Z}$ is defined by 
$$
T(n)=\sum_{\lambda \in \Lambda}c_\lambda\lambda^n. 
$$
More famously, ${T:\Z\to \Z}$ is defined by the recurrence relation 
\begin{align*}
&T(0)=0,\qquad  T(1)=T(2)=1, \\ 
&T(n+3)=T(n+2)+T(n+1)+T(n) \qquad (n\in \Z). 
\end{align*}
Attention: $a(n)$ in~\cite{oeis} corresponds to our ${T(n+1)}$.

It is known  that  ${T(n)=0}$ if and only if ${n\in {\mathcal Z}_T:=\{0,-1,-4,-17\}}$. For a proof see,  for instance,~\cite{MT91}, Example~2 on page~360; in that example~$u_n$  corresponds to our $T(-n)$. In \cite{ML14}, Marques and Lengyel determined 
the exponent of $2$ in $T_n$. Denoting for a prime $p$ and an nonzero integer $m$ by $\nu_p(m)$ the exact exponent of $p$ in the factorization of $m$, and setting $\nu_p(0)=\infty$,
they proved the following theorem. 

\begin{theorem}
For $n\ge 1$, we have
$$
\nu_2(T_n)=\left\{\begin{matrix} 0, & {\text{\rm if}} & n\equiv 1,2\pmod 4;\\
1, & {\text{\rm if}} & n\equiv 3,11\pmod {16};\\
2, & {\text{\rm if}} & n\equiv 4,8\pmod {16};\\
3, &{\text{\rm if}} & n\equiv 7\pmod {16};\\
\nu_2(n)-1, & {\text{\rm if}} & n\equiv 0\pmod {16};\\
\nu_2(n+4)-1, &{\text{\rm if}} & n\equiv 12\pmod {16};\\
\nu_2(n+17)+1, &{\text{\rm if}} & n\equiv 15\pmod {32};\\
\nu_2(n+1)+1, &{\text{\rm if}} & n\equiv 31\pmod {32}.
\end{matrix}
\right.
$$
\end{theorem}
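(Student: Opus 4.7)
The plan is to combine $2$-adic analysis of the Binet formula $T(n) = \sum_{\lambda \in \Lambda} c_\lambda \lambda^n$ with direct computation modulo small powers of $2$.

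For the setup, observe that $P(X) \equiv (X-1)^3 \pmod 2$, and that the substitution $Y = X - 1$ gives $P(Y+1) = Y^3 + 2Y^2 - 2$, whose Newton polygon is a single segment of slope $-1/3$. Hence $P$ is irreducible over $\Q_2$, and $K := \Q_2(\lambda)$ is the totally ramified cubic extension with uniformizer $\pi := \lambda - 1$. Normalize the valuation $v$ on the Galois closure of $K$ so that $v(\pi) = 1$; then $v(2) = 3$ and $v_2(\lambda - 1) = 1/3$. A short calculation using $P'(\lambda) = (3\lambda + 1)(\lambda - 1)$ gives $v(c_\lambda) = -2$, and an easy induction via $\lambda^{2^{s+1}} - 1 = (\lambda^{2^s} - 1)(\lambda^{2^s} + 1)$ yields $v(\lambda^{2^s} - 1) = 3s - 2$ for $s \geq 2$.

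The first four cases of the theorem claim that $\nu_2(T_n)$ is constant (at most $3$) on the indicated residue classes modulo $4$ or $16$; these are handled directly by computing $T_n \pmod{16}$ for one full period of $(T_n \bmod 16)$ using the recurrence. The remaining four cases correspond to the four zeros of $T$: namely $n = 0, -4, -17, -1$, which lie respectively in the classes $0 \pmod{16}$, $12 \pmod{16}$, $15 \pmod{32}$, $31 \pmod{32}$. For each $c \in \{0, 4, 17, 1\}$, write $n = -c + 2^s m$ with $m$ odd; since $\sum_\lambda c_\lambda = 0$ and $T(-c) = 0$, the expansion
\[
T(n) = \sum_\lambda c_\lambda \lambda^{-c}(1 + u_\lambda)^m, \qquad u_\lambda := \lambda^{2^s} - 1,
\]
has vanishing constant ($m^0$) term, with leading contribution $m \cdot S(s, c)$, where $S(s, c) := \sum_\lambda c_\lambda \lambda^{-c} u_\lambda \in \Q_2$. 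The identity $u_{\lambda, s+1} = u_{\lambda,s}^2 + 2 u_{\lambda,s}$ gives the recursion $S(s+1, c) = 2 S(s, c) + \sum_\lambda c_\lambda \lambda^{-c} u_{\lambda,s}^2$, whose second summand has $v \geq 6s - 6$; for $s$ above an explicit threshold this exceeds $v(2 S(s, c))$, yielding $\nu_2(S(s+1, c)) = 1 + \nu_2(S(s, c))$. Analogous estimates show the $m^{\geq 2}$ binomial terms have strictly larger $v$, whence $\nu_2(T(n)) = \nu_2(S(s, c))$.

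The main obstacle is pinning down the base case $\nu_2(S(s_0, c))$ for each $c$. Since $S(s, c) \in \Q_2$ is a Galois-symmetric sum of elements with fractional valuations in $K$, its exact integer $\nu_2$-valuation depends on subtle cancellations among the Galois conjugates that are not forced by the general setup. I would establish the base cases by computing $T_n \pmod{2^N}$ for a few specific values of $n$ in each exceptional class via the recurrence, which pins down the constants $d_c \in \{-1, -1, +1, +1\}$ appearing in the formula $\nu_2(T_n) = \nu_2(n + c) + d_c$, and simultaneously verifies that the threshold on $s$ needed in the inductive step is compatible with the range $n \geq 1$ prescribed by the theorem.
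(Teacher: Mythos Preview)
The paper does not prove this theorem at all: it is quoted from Marques and Lengyel~\cite{ML14} as motivation, and the paper's own analytic framework (Sections~\ref{spadic}--\ref{sex}) explicitly excludes $p=2$ because $\calD(0,\rho)\ne\calD(0,1)$ when $p=2$ (see Remark~\ref{rrhoone}). So there is no ``paper's own proof'' to compare against; the relevant comparison is with the original argument in~\cite{ML14}.

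That said, your outline is sound and is essentially the standard route to such a result. Your computations check out: $P(Y+1)=Y^3+2Y^2-2$ has Newton slope $-1/3$, so $\Q_2(\lambda)/\Q_2$ is totally ramified of degree~$3$ with uniformizer $\pi=\lambda-1$; the factorization $P'(\lambda)=(3\lambda+1)(\lambda-1)$ gives $v(P'(\lambda))=2$ and hence $v(c_\lambda)=-2$; and the doubling recursion yields $v(\lambda^{2^s}-1)=3s-2$ for $s\ge 2$. The recursion $S(s+1,c)=2S(s,c)+\sum_\lambda c_\lambda\lambda^{-c}u_{\lambda,s}^2$ is correct, and the error term has $v\ge 6s-6$. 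Comparing with the target $v(S(s,c))=3(s+d_c)$, the inductive step requires $6s-6>3+3(s+d_c)$, i.e.\ $s\ge 3$ for $d_c=-1$ and $s\ge 5$ for $d_c=+1$; the residue classes in the statement force $s\ge 4$ (for $c\in\{0,4\}$) and $s\ge 5$ (for $c\in\{1,17\}$), so the thresholds match. The analogous bound on the $k\ge 2$ binomial terms also goes through in these ranges.

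What remains genuinely incomplete is exactly what you flag: the base values $\nu_2(S(s_0,c))$ and the constant cases modulo $16$. These are finite computations with the recurrence, but they are not optional---the sign of $d_c$ (and the split between moduli $16$ and $32$) is invisible from the valuation estimates alone. Once those are tabulated your argument becomes a complete proof; as written it is a correct and well-organized plan rather than a proof.
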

Encouraged by their result for the prime $p=2$, they conjectured that such formulas should hold for $\nu_p(T_n)$ for every prime $p$. More precisely, here is their conjecture.

\begin{conjecture}[Conjecture~8 from~\cite{ML14}]
\label{conml}
Let~$p$ be a prime number.
There exists a positive integer~$Q$ such that for every ${i\in \{0,1, \ldots, Q-1\}}$ we have one of the following two options.
\begin{enumerate}
\item[(C)]
There exists ${\kappa_i \in \Z_{\ge 0}}$ such that for all but finitely many ${n\in \Z}$ satisfying ${n\equiv i \pmod Q}$ we have ${\nu_p(T(n))=\kappa_i}$. 

\item[(L)]
There exist 
$$
a_i\in \Z, \qquad \kappa_i\in \Z, \qquad  \mu_i \in \Z_{>0}
$$
satisfying 
\begin{equation}
\label{enup}
\nu_p(a_i-i) \ge \nu_p(Q),
\end{equation}
such that for all but finitely many ${n\in \Z}$ satisfying ${n\equiv i \pmod Q}$ we have
\begin{equation}
\label{ehrs}
\nu_p(T(n)) =\kappa_i+\mu_i\nu_p(n-a_i). 
\end{equation} 
\end{enumerate}
\end{conjecture}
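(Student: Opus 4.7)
The plan is to adapt the classical Skolem–Mahler–Lech approach to $T$: interpolate $T$ $p$-adically, apply the Weierstrass preparation / Strassman theorem, and read off the claimed dichotomy from the local shape of the interpolating function near its zeros.

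\textbf{Setup.} Let $K\subset\C_p$ be a finite extension of $\Q_p$ containing $\Lambda$, and extend $\nu_p$ to $K$. Since $\lambda_1\lambda_2\lambda_3=1$, every $\lambda\in\Lambda$ is a unit in $O_K$. Choose $Q\in\Z_{>0}$ so that for each $\lambda\in\Lambda$ the element $\lambda^Q$ is close enough to $1$ that $\ell_\lambda:=\log\lambda^Q$ is defined and $|z\ell_\lambda|_p<p^{-1/(p-1)}$ on the closed unit disk of $\C_p$; this forces $Q$ to be divisible by a high power of $p$ times the residual orders of the $\lambda$'s. For each $i\in\{0,\dots,Q-1\}$ set
$$
f_i(z)=\sum_{\lambda\in\Lambda}c_\lambda\lambda^i\exp(z\ell_\lambda),\qquad\text{so } f_i(m)=T(i+Qm)\text{ for every }m\in\Z.
$$
The sum is $\Gal(K/\Q_p)$-invariant, so $f_i$ is in fact a $\Q_p$-analytic function on the closed unit disk. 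Because $\mathcal Z_T$ is finite and $\Z$ is dense in $\Z_p$, no $f_i$ vanishes identically.

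\textbf{Weierstrass preparation and refinement.} Apply Weierstrass preparation to write $f_i=P_iU_i$ with $P_i\in\Q_p[z]$ a distinguished polynomial and $U_i\in\Q_p\{z\}^\times$ a unit, and split $P_i=P_i^{\mathrm r}P_i^{\mathrm{irr}}$, where $P_i^{\mathrm r}$ collects the $\Q_p$-rational (hence $\Z_p$) roots. The functions $m\mapsto\nu_p(U_i(m))$ and $m\mapsto\nu_p(P_i^{\mathrm{irr}}(m))$ are bounded and locally constant on $\Z_p$; replacing $Q$ by $Qp^N$ for $N$ large enough, we can simultaneously (i) force both of them to be constant on every refined residue class, (ii) force every refined class to contain at most one root $\hat a_i\in\Z_p$ of $P_i^{\mathrm r}$, and (iii) make every refined class narrower than all pairwise distances between the rational roots of $\prod_i P_i$.

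\textbf{Dichotomy.} Fix a refined residue class, indexed by $i\in\{0,\dots,Q-1\}$. If the associated disk in $\Z_p$ contains no root of $P_i^{\mathrm r}$, then $\nu_p(T(i+Qm))$ equals a constant $\kappa_i\in\Z_{\ge0}$ for every $m\in\Z$ in the disk, proving case~(C). Otherwise let $\hat a_i\in\Z_p$ be the unique root of $P_i^{\mathrm r}$ in the disk, of multiplicity $\mu_i\ge 1$. On the disk,
$$
\nu_p\bigl(T(i+Qm)\bigr)=\mu_i\,\nu_p(m-\hat a_i)+\kappa_i',
$$
with $\kappa_i'\in\Z$ independent of $m$. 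Approximating $i+Q\hat a_i\in\Z_p$ by an integer $a_i$ with $\nu_p(a_i-i-Q\hat a_i)$ larger than any $\nu_p(m-\hat a_i)$ that occurs gives $\nu_p(n-a_i)=\nu_p(Q)+\nu_p(m-\hat a_i)$ for $n=i+Qm$, as well as $\nu_p(a_i-i)\ge\nu_p(Q)$, so that~\eqref{ehrs} holds with $\kappa_i=\kappa_i'-\mu_i\nu_p(Q)$.

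\textbf{Main obstacle.} The delicate point is the \emph{uniform} refinement producing a single $Q$ that works for all residue classes at once. That the zeros of $f_i$ in $K\setminus\Q_p$ contribute a bounded quantity is elementary (the distance of such a zero from $\Z_p$ is bounded below), and that $\nu_p$ of a $p$-adic analytic function without zeros is locally constant is standard; but bundling these observations for every one of the $Qp^N$ refined classes simultaneously, so that a finite~$N$ suffices, requires a careful compactness/Hensel-type argument. A more interesting obstruction lies in making the proof \emph{effective}, as in the Marques–Lengyel theorem for $p=2$: this would demand explicit control of the splitting type of $P$ over $\Q_p$, of the residues of the $\lambda$'s, and of the resultants of the $P_i$, none of which is furnished by the present qualitative approach.
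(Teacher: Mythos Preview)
The statement you set out to prove is Conjecture~\ref{conml}, and the paper \emph{disproves} it for infinitely many primes (Theorem~\ref{ththird}) and for most primes below $600$ (Theorem~\ref{thm5to283}). So your argument necessarily contains an error; here is where it breaks.

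The gap is in the ``Dichotomy'' paragraph, at the approximation step. You have a root $\hat a_i\in\Z_p$ of $P_i^{\mathrm r}$ and propose to replace the $p$-adic number $i+Q\hat a_i$ by an integer $a_i$ chosen so that $\nu_p\bigl(a_i-(i+Q\hat a_i)\bigr)$ exceeds ``any $\nu_p(m-\hat a_i)$ that occurs''. But as $m$ ranges over the integers in the given residue disk, the quantity $\nu_p(m-\hat a_i)$ is \emph{unbounded}: $\Z$ is dense in $\Z_p$, and $\hat a_i$ lies in that very disk. Hence no such integer $a_i$ exists unless $\hat a_i\in\Z$ already. When $\hat a_i\in\Z_p\setminus\Z$, fix any candidate $a_i\in\Z$ and set $M:=\nu_p\bigl(a_i-(i+Q\hat a_i)\bigr)<\infty$. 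There are infinitely many integers $n\equiv i\pmod Q$ with $\nu_p\bigl(n-(i+Q\hat a_i)\bigr)>M$; for each of these $\nu_p(n-a_i)=M$ is constant while $\nu_p(T(n))\to\infty$, so~\eqref{ehrs} fails on an infinite set, not merely a finite one. Option~(C) is ruled out for the same reason.

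This is exactly the content of Theorem~\ref{thstatements} in the paper: Conjecture~\ref{conml} holds for a given $p$ if and only if, for every $\ell$, every zero $b\in\Z_p$ of the interpolating function $f_\ell$ satisfies $\ell+Nb\in\calZ_T\subset\Z$. Your Weierstrass-preparation reduction to the zeros of the $f_i$ is correct and is essentially what the paper does; the mistake is the tacit assumption that a $\Z_p$-rational zero can always be swapped for an integer, which is precisely the point at issue and is what the paper exploits to produce counterexamples. The ``main obstacle'' you flag (uniform refinement, effectivity) is a red herring by comparison.
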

Note that our statement looks different from Conjecture~8 from~\cite{ML14}, but, in fact, it is equivalent to it.

Informally, in the case~(C) (that is, ``constant'') $\nu_p(T(n))$ is a constant function on the entire residue class ${n\equiv i \pmod Q}$ with finitely many~$n$ removed, while in the case~(L) (``linear'') it is a linear function of ${\nu_p(n-a_i)}$. 

\begin{remark}
\label{rimp}
Let us comment on condition~\eqref{enup}, which does not appear in~\cite{ML14}. This condition is needed to ensure that the right-hand side of~\eqref{ehrs} is not constant (in which case option~(C) would hold for the class ${n\equiv i\pmod Q}$). To be precise, the following three statements are equivalent: 
\begin{enumerate}
\item
\label{ih}
\eqref{enup} holds; 
\item
\label{inc}
$\nu_p(n-a_i)$ is not constant on the residue class ${n\equiv i\pmod Q}$;
\item
\label{inb}
$\nu_p(n-a_i)$ is not bounded on the residue class ${n\equiv i\pmod Q}$.
\end{enumerate} 
Indeed, if ${\nu_p(a_i-i) < \nu_p(Q)}$ then  ${\nu_p(n-a_i)=\nu_p(i-a_i)}$ for  ${n\equiv i \pmod Q}$, which proves the implication \ref{inc}.$\Rightarrow$\ref{ih}. The implication \ref{inb}.$\Rightarrow$\ref{inc}.\ is obvious. Finally, assume that~\eqref{enup} holds. Denoting ${\nu:=\nu_p(Q)}$, for every ${k\ge \nu}$ the Chinese remainder theorem provides ${m_k\in \Z}$ satisfying
$$
m_k\equiv \frac{a_i-i}{p^\nu} \pmod{p^{k-\nu}}, \qquad m_k \equiv 0 \pmod {Qp^{-\nu}}. 
$$
Then ${n_k:=i+m_kp^\nu}$ satisfies 
${n_k\equiv a_i \pmod{p^{k}}}$ and ${n_k \equiv i \pmod {Q}}$,  
which proves the implication \ref{ih}.$\Rightarrow$\ref{inb}. 
\end{remark}

Already the case $p=3$ looks encouraging.

\begin{theorem}
\label{thm:p=3}
For $n\ge 1$, we have
$$
\nu_3(T_n)=\left\{\begin{matrix} 
0, &{\text{\rm if}} & n\equiv 1,2,3,4,5,6,8,10,11\pmod {13};\\
1, & {\text{\rm if}} & n\equiv 7\pmod {13};\\
\nu_3(n)+2, & {\text{\rm if}}  & n\equiv 0\pmod {13};\\
\nu_3(n+1)+2, & {\text{\rm if}} & n\equiv 12\pmod {13};\\
4, &{\text{\rm if}} & n\equiv 9\pmod {39};\\
\nu_3(n+17)+4, & {\text{\rm if}} & n\equiv 22\pmod {39};\\
\nu_3(n+4)+4, &{\text{\rm if}} & n\equiv 35\pmod {39}.
\end{matrix}
\right.
$$
\end{theorem}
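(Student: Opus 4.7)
My approach is $3$-adic analysis of the closed form $T(n) = \sum_{\lambda \in \Lambda} c_\lambda \lambda^n$. I turn each arithmetic progression modulo the period of $T \bmod 3$ into a $3$-adic analytic function on $\Z_3$, then apply Strassmann's theorem case by case.

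\emph{Setup.} The polynomial $P(X) = X^3 - X^2 - X - 1$ remains irreducible modulo $3$ (it has no roots in $\mathbb{F}_3$), so its roots $\lambda_1, \lambda_2, \lambda_3$ lie in the unramified cubic extension $K/\Q_3$ as units in $\mathcal{O}_K$. The order of each $\lambda_i$ in $\mathbb{F}_{27}^\times$ divides $26 = 2\cdot 13$; computing $T \bmod 3$ shows the period is exactly $13$, so the order is $13$. Write $\lambda_i^{13} = 1 + 3\mu_i$ with $\mu_i \in \mathcal{O}_K$, and for each $r \in \{0, \ldots, 12\}$ define
$$
f_r(m) = \sum_{i=1}^{3} c_{\lambda_i}\, \lambda_i^r (1 + 3\mu_i)^m \qquad (m \in \Z_3).
$$
Then $f_r$ is $3$-adic analytic on $\Z_3$ (since $\nu_3(\log(1 + 3\mu_i)) \ge 1 > 1/(3-1)$), takes values in $\Z_3$ by Galois invariance, and interpolates $T(13m + r)$ for $m \in \Z$. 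Thus Theorem~\ref{thm:p=3} amounts to computing $\nu_3(f_r(m))$ for each $r$ and all $m \in \Z_3$.

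\emph{Case analysis.} For the nine residues $r \in \{1, 2, 3, 4, 5, 6, 8, 10, 11\}$ one has $T(r) \not\equiv 0 \pmod 3$, so $\nu_3(f_r) \equiv 0$ (first branch). The remaining four residues are dictated by $\mathcal{Z}_T = \{0, -1, -4, -17\}$, whose reductions modulo $13$ are $0, 12, 9, 9$. In classes $r = 0$ and $r = 12$, one zero each appears; factor $f_0(m) = m\, g_0(m)$ and $f_{12}(m) = (m+1) g_{12}(m)$, and show $\nu_3(g_0) \equiv \nu_3(g_{12}) \equiv 2$, yielding the third and fourth branches via $\nu_3(n) = \nu_3(m)$ and $\nu_3(n+1) = \nu_3(m+1)$. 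In class $r = 9$, \emph{both} remaining zeros appear (at $m = -1$ for $n = -4$ and at $m = -2$ for $n = -17$); factor $f_9(m) = (m+1)(m+2) g_9(m)$, and show $\nu_3(g_9) \equiv 4$. Splitting $n \bmod 39$ according to $m \bmod 3$, and using $\nu_3(m+1) = \nu_3((n+4)/13) = \nu_3(n+4)$ and similarly $\nu_3(m+2) = \nu_3(n+17)$, produces the three branches indexed by $9, 22, 35 \pmod{39}$. The remaining class $r = 7$ contains no zero of $T$; here $f_7(0) = T(7) = 24$ has $\nu_3 = 1$, and one must show $\nu_3(f_7) \equiv 1$ on $\Z_3$ (second branch).

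\emph{Main obstacle.} The technical core is proving the claimed \emph{constancy} of $\nu_3$ for the four auxiliary functions $f_7$, $g_0$, $g_{12}$, $g_9$. I would expand each as a Mahler (or power) series with $\Z_3$-coefficients, compute the first few coefficients as Galois-symmetric functions of $\lambda_i, \mu_i, c_{\lambda_i}$ (accessible from $P$ via Newton's identities), and apply Strassmann's theorem: constancy of $\nu_3(g(m)) = \kappa$ on $\Z_3$ amounts to the first Newton slope being strictly positive and the leading coefficient having valuation exactly $\kappa$. The $g_9$ case is the heaviest, since one divides out a quadratic factor; matching $\nu_3(g_9(0)) = 4$ with $T(9) = 3^4$ is the first verification, and strictness of subsequent slopes must be obtained by computing or bounding further coefficients (equivalently, further values such as $T(22)$ and $T(35)$ with known valuations). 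Once these four Strassmann bounds are in place, the seven branches of the theorem follow by direct reassembly.
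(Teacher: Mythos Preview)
Your plan is correct and, for the residue classes $r\in\{1,2,3,4,5,6,8,10,11\}$, $r=7$, and $r\in\{0,12\}$, it coincides with the paper's argument almost verbatim: one checks $T(r)\not\equiv 0\pmod 3$ in the first group, verifies $T(13m+7)\equiv 6\pmod 9$ (equivalently, your Strassmann bound on $f_7$) in the second, and for $r=0,12$ factors out the unique zero and reads off the constant valuation of the quotient from the coefficient estimates $\nu_3(\beta_1)=1$, $\nu_3(\beta_k)\ge 2$ for $k\ge 2$.

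The one genuine divergence is the class $r=9$. You propose factoring $f_9(m)=(m+1)(m+2)\,g_9(m)$ and proving $\nu_3(g_9)\equiv 4$ on $\Z_3$. The paper instead \emph{refines the period}: it passes from $N=13$ to $N_1=3N=39$, which separates the two zeros $-4$ and $-17$ into distinct residue classes modulo $39$. Each of the three resulting functions (for $\ell\equiv 9,-4,-17\pmod{39}$) then has at most one zero, so the analysis reduces to the same linear template already used for $r=0,12$---factor out the single zero (or none, for $\ell\equiv 9$) and bound one linear coefficient. Your route is perfectly valid, but it forces you to divide a power series by a quadratic and control the resulting coefficients to higher $3$-adic precision; the paper's trick keeps every case uniformly ``one zero, one Strassmann slope,'' which is why their computation stays light. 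If you carry out your version, the key checks are $\nu_3(g_9(0))=\nu_3(T(9)/2)=4$ together with $\nu_3$ of the linear and quadratic coefficients of $g_9$ being $\ge 5$; these follow from the same finite list of values $T(9),T(22),T(35),T(48),\ldots$ that the paper uses, just reorganized.
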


However, the following theorem shows that Conjecture~\ref{conml} fails for infinitely many primes.

\begin{theorem}
\label{ththird}
There is an infinite set\footnote{We will see that this set of primes is not only infinite, but is of relative density $1/12$ in the set of all primes.} of prime numbers congruent to ${2\pmod 3}$   such that for every prime~$p$ from this set the following holds. 
\begin{enumerate}
\item
For each ${n\in \Z}$ satisfying ${n\equiv 1/3\pmod {p-1}}$ we have 
$$
\nu_p(T(n)) \ge\nu_p(n-1/3). 
$$

\item
For each ${n\in \Z}$ with ${n\equiv -5/3\pmod {p-1}}$ we have 
$$
\nu_p(T(n)) \ge \nu_p(n+5/3).
$$ 
\end{enumerate}
\end{theorem}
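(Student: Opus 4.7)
The plan rests on an algebraic identity that I would uncover first: computing $X^4$ modulo $P(X)$ yields $X^4\equiv 2X^2+2X+1\pmod{P(X)}$, whence $(X+1)^3\equiv 2X^4\pmod{P(X)}$. Consequently, for any root $\lambda$ of $P$ in a field containing a cube root $k$ of $1/2$, the element $\mu:=k(\lambda+1)/\lambda$ satisfies $\mu^3=\lambda$ — the cube root of $\lambda$ is a rational function of $\lambda$ itself, with coefficients involving $k=2^{-1/3}$.

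Next I would restrict to primes $p$ with $p\equiv 2\pmod 3$ and $P$ splitting completely modulo $p$. Since the splitting field $M$ of $P$ has Galois group $S_3$ with unique quadratic subfield $\Q(\sqrt{-11})$, which is disjoint from $\Q(\zeta_3)$, these two Chebotarev conditions are independent and their joint density is $\tfrac12\cdot\tfrac16=\tfrac{1}{12}$, matching the footnote. For such $p$, Hensel's lemma lifts the roots to $\lambda_i\in\Z_p^*$, and since $\gcd(3,p-1)=1$, the element $k=2^{-1/3}\in\Z_p^*$ exists uniquely; thus each $\lambda_i$ has the unique $p$-adic cube root $\mu_i=k(\lambda_i+1)/\lambda_i$. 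The central identity then becomes
$$
\sum_i c_{\lambda_i}\mu_i \;=\; k\sum_i c_{\lambda_i}\Bigl(1+\tfrac{1}{\lambda_i}\Bigr) \;=\; k\bigl(T(0)+T(-1)\bigr) \;=\; 0,
$$
using $T(0)=\sum_i c_{\lambda_i}=0$ and $T(-1)=\sum_i c_{\lambda_i}/\lambda_i=0$.

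To pass from this vanishing to the stated divisibility, I would $p$-adically interpolate $T$ along the residue class $1/3\pmod{p-1}$. Choosing $r\in\Z$ with $r\equiv 1/3\pmod{p-1}$ and setting $F(t):=\sum_i c_{\lambda_i}\lambda_i^r(\lambda_i^{p-1})^t$ for $t\in\Z_p$ yields a $p$-adic analytic function (because $\lambda_i^{p-1}\in 1+p\Z_p$, which is where the splitting hypothesis is essential) that interpolates $T(r+(p-1)k)$ at integers $k\ge 0$. Evaluating at $t_0=(1/3-r)/(p-1)\in\Z_p$ recovers $F(t_0)=\sum_i c_{\lambda_i}\mu_i=0$, and standard $p$-adic Weierstrass-style reasoning gives $\nu_p(F(t))\ge\nu_p(t-t_0)=\nu_p(n-1/3)$ after substituting $n=r+(p-1)t$ (and using $\nu_p(p-1)=0$). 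Part~(2) is entirely analogous: the same identity yields $\mu^{-5}=2k\lambda/(\lambda+1)^2$, and the vanishing $\sum_i c_{\lambda_i}\lambda_i/(\lambda_i+1)^2=0$ drops out of the partial-fraction expansion $X/P(X)=\sum_\lambda c_\lambda/(X-\lambda)$ evaluated together with its derivative at $X=-1$ (where $P(-1)=-2$ and $P'(-1)=4$ yield $\sum_\lambda c_\lambda/(\lambda+1)=-\tfrac12$ and $\sum_\lambda c_\lambda/(\lambda+1)^2=-\tfrac12$, which combine to zero).

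The main obstacle I foresee is the initial \emph{discovery} of the identity $(X+1)^3\equiv 2X^4\pmod P$ — without this miracle the sums $\sum_i c_{\lambda_i}\mu_i$ look intractable. A secondary point to verify is sharpness of the hypothesis: for $p\equiv 2\pmod 3$ where $P$ fails to split (e.g.\ $p=5$, where $T(7)=24\not\equiv 0\pmod 5$), some $\lambda_i^{p-1}$ falls outside $1+p\Z_p$ and the interpolation fails, explaining why the density $1/12$ rather than $1/2$ is the correct answer.
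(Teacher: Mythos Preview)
Your argument is correct and reaches the same conclusion via the same $p$-adic interpolation machinery, but the way you establish the central vanishing $\sum_\lambda c_\lambda\lambda^{1/3}=0$ is genuinely different from the paper's and worth highlighting. The paper first proves abstractly (via the factorisation of $X_1^3+X_2^3+X_3^3-3X_1X_2X_3$) that \emph{some} choice of cube roots makes the sum vanish, and then uses a separate Galois-theoretic argument to show that the unique $p$-adic cube roots are the correct choice. You bypass both steps with the explicit polynomial identity $(X+1)^3\equiv 2X^4\pmod{P(X)}$, which gives a closed-form $\lambda^{1/3}=2^{-1/3}(\lambda+1)/\lambda$ and reduces the vanishing to the triviality $T(0)+T(-1)=0$; the companion computation for $-5/3$ via partial fractions of $X/P(X)$ at $X=-1$ is equally clean. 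Your route is more constructive and explains \emph{why} $1/3$ and $-5/3$ arise as the non-integral twisted zeros (they are linked directly to the integral zeros $0,-1$ and $-4$ through the identity), whereas the paper's argument is more abstract but would transfer more readily to other cubic recurrences where such a miracle identity might not be visible. The interpolation half of your proof (the function $F(t)$, the evaluation at $t_0$, and the Lipschitz bound $\nu_p(F(t))\ge\nu_p(t-t_0)$) is essentially identical to the paper's Section~5, just written with period $p-1$ instead of the minimal period~$N$.
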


Clearly, Theorem~\ref{ththird} contradicts Conjecture~\ref{conml}. Indeed, let~$p$ be as in the theorem, and let $(n_k)$ be a sequence of integers satisfying 
$$
n_k\equiv 1/3\pmod{(p-1)p^k}.
$$
If Conjecture~\ref{conml} is true for this~$p$ then for some ${i\in \{0,\ldots, Q-1\}}$ the residue class ${i\pmod Q}$ contains infinitely many~$n_k$. Since ${\nu_p(n_k-1/3)\to \infty}$,  we have ${\nu_p(T(n))\to \infty}$. Hence for this~$i$ we must have option~(L) of Conjecture~\ref{conml}:
$$
\nu_p(T(n_k))= \kappa_i+\mu_i\nu_p(n_k-a_i). 
$$
Moreover, we must have ${\nu_p(n_k-a_i)\to \infty }$ as well. But,
since ${a_i\in \Z}$, we have ${a_i\ne 1/3}$, and hence  
${\nu_p(n_k-1/3)}$ and  ${\nu_p(n_k-a_i)}$ cannot both tend to infinity. 

One may hope to rescue Conjecture~\ref{conml} by allowing~$a_i$ to be
rational numbers, as below:

\begin{conjecture}
\label{conmlrat}
Let~$p$ be a prime number.
There exists a positive integer~$Q$ such that for every ${i\in \{0,1, \ldots, Q-1\}}$ we have one of the following two options.
\begin{enumerate}
\item[(C)]
There exists ${\kappa_i \in \Z_{\ge 0}}$ such that for all but finitely many ${n\in \Z}$ satisfying ${n\equiv i \pmod Q}$ we have ${\nu_p(T(n))=\kappa_i}$. 

\item[(L)]
There exist 
$$
a_i\in \Q, \qquad \kappa_i\in \Z, \qquad  \mu_i \in \Z_{>0}
$$
satisfying ${\nu_p(a_i-i) \ge \nu_p(Q)}$, such that for all but finitely many ${n\in \Z}$ satisfying ${n\equiv i \pmod Q}$ we have
${\nu_p(T(n)) =\kappa_i+\mu_i\nu_p(n-a_i)}$. 

\end{enumerate}
\end{conjecture}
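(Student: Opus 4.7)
\medskip

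\noindent\textbf{Proof proposal for Conjecture~\ref{conmlrat}.}

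The plan is to interpolate $T(n)$ $p$-adic analytically on each residue class modulo a suitable $Q$, apply Strassmann's theorem, and then argue that the resulting $p$-adic zeros are rational. Let $K$ denote the splitting field of $P(X)$ over $\Q_p$, with ring of integers $\calO$ and residue field of size $q$. Since $P(0)=-1$, each root ${\lambda_j\in\calO^{\times}}$; and since the discriminant of $P$ equals $-44$, for ${p\notin\{2,11\}}$ the coefficients ${c_{\lambda_j}=\lambda_j P'(\lambda_j)^{-1}}$ lie in $\calO^{\times}$ as well (the two small primes will be absorbed into the choice of $Q$). I would pick $Q$ to be a sufficiently large multiple of $q-1$, times a high enough power of $p$, so that $L_j:=\log(\lambda_j^{Q})$ converges in $\calO$ and satisfies ${\nu_p(L_j)>1/(p-1)}$ for each~$j$. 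Then for every residue class ${i\in\{0,\dots,Q-1\}}$ the formula
$$
g_i(m)\;=\;\sum_{j=1}^{3}c_{\lambda_j}\,\lambda_j^{\,i}\,\exp(mL_j)
$$
defines a $p$-adic analytic function on $\Z_p$ that agrees with ${m\mapsto T(i+Qm)}$ on ${\Z\subseteq\Z_p}$.

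Because $\calZ_T$ is finite, no $g_i$ is identically zero. Strassmann's theorem then yields a finite list of zeros ${\alpha_{i,1},\dots,\alpha_{i,r_i}\in\Z_p}$ with multiplicities $\mu_{i,s}\in\Z_{>0}$. Enlarging $Q$ to ${Q'=Qp^{N}}$ for $N$ sufficiently large, I can split each original class ${i\pmod Q}$ into sub-classes modulo $Q'$ so that each sub-class meets at most one of the disks $\{m:\nu_p(m-\alpha_{i,s})\ge N\}$. On a sub-class disjoint from all such disks, $\nu_p(g_i(m))$ is bounded on ${\Z_p}$ and, since it is continuous and $\Z$-valued, eventually constant on the integers of that sub-class, producing option~(C). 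On a sub-class accumulating at a zero $\alpha_{i,s}$, Weierstrass preparation gives ${g_i(m)=(m-\alpha_{i,s})^{\mu_{i,s}}h(m)}$ with $h(\alpha_{i,s})\ne 0$, so that
$$
\nu_p\bigl(T(n)\bigr)\;=\;\nu_p\bigl(g_i(m)\bigr)\;=\;\mu_{i,s}\,\nu_p(m-\alpha_{i,s})+\nu_p\bigl(h(\alpha_{i,s})\bigr)
$$
for all ${m\in\Z}$ close enough to $\alpha_{i,s}$. Setting ${a_i=i+Q'\alpha_{i,s}}$, ${\mu_i=\mu_{i,s}}$, ${\kappa_i=\nu_p(h(\alpha_{i,s}))}$ gives the exact form of option~(L); the required bound ${\nu_p(a_i-i)\ge\nu_p(Q')}$ is automatic from ${\alpha_{i,s}\in\Z_p}$.

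The main obstacle is to upgrade the $p$-adic integer $a_i\in\Z_p$ produced above to a \emph{rational} $a_i\in\Q$, as demanded by the statement of Conjecture~\ref{conmlrat}. Some zeros come from genuine integer zeros of $T$ (the set ${\{0,-1,-4,-17\}}$) and are trivially rational; Theorem~\ref{ththird} provides further rational zeros, namely ${1/3}$ and ${-5/3}$, arising for a positive-density Chebotarev class of primes. Both families share the feature that the exponential equation ${c_{\lambda_1}\lambda_1^{\,a}+c_{\lambda_2}\lambda_2^{\,a}+c_{\lambda_3}\lambda_3^{\,a}=0}$ is forced to hold at a value $a\in\Q$ because of an algebraic identity in the splitting field of $P$ over $\Q$ rather than by $p$-adic accident. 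The strategy would therefore be to classify all $p$-adic $\alpha\in\Z_p$ at which such an identity can hold: for each splitting type of $P\bmod p$ one would study the equation as an S-unit equation in the three units $\lambda_1,\lambda_2,\lambda_3$ of the tribonacci field, and use tools such as the $p$-adic subspace theorem (or Galois descent applied to the algebraic closure of the relation in $\overline{\Q}$) to conclude that every solution $\alpha$ is algebraic and, in fact, rational. This classification of rational exponent solutions to a $3$-term $S$-unit relation is the genuinely arithmetic heart of the conjecture, and — unlike the $p$-adic analytic skeleton above, which is essentially routine — it is at present open; at best one can hope to establish it for restricted families of primes $p$ (for example those treated by Theorem~\ref{thm:p=3} and Theorem~\ref{ththird}), which already illustrate the mechanism but fall well short of the full statement for \emph{every} $p$.
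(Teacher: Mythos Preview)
The statement you are trying to prove is a \emph{conjecture}, and the paper does not prove it; on the contrary, Theorem~\ref{thm5to283rat} shows that it \emph{fails} for most primes $p\in[5,599]$. So there is no ``paper's own proof'' to compare against, and any purported proof of the full statement must contain an error.

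Your analytic skeleton is essentially correct and coincides with what the paper establishes: Theorem~\ref{thfunml} gives the (C)/(L) dichotomy for any $p$-adic analytic function, and Theorem~\ref{thstatements} shows that Conjecture~\ref{conmlrat} holds for a given~$p$ \emph{if and only if} every zero of every $f_\ell$ lies in $\Q\cap\Z_p$ (equivalently, in $\calQ_T$). You have rediscovered exactly this reduction.

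The gap you flag---upgrading the $p$-adic zero $\alpha_{i,s}\in\Z_p$ to a rational number---is not merely ``at present open''; it is provably impossible in general. Theorem~\ref{thm:NoML} together with the computations of Section~\ref{sec:comput} produces, for each prime in the list of Theorem~\ref{thm5to283rat}(i), an explicit~$\ell$ for which $f_\ell$ has a unique zero $b\in\Z_p$ with $\ell+Nb\not\equiv a\pmod p$ for every $a\in\calQ_T$; by Theorem~\ref{thstatements} this zero is therefore irrational. Hence your proposed strategy of classifying all solutions to ${\sum c_{\lambda_j}\lambda_j^{\,a}=0}$ via $S$-unit or subspace methods and concluding that every $p$-adic solution is rational cannot succeed: for these primes the relation genuinely holds at an irrational $p$-adic exponent, and no amount of Galois descent or Diophantine machinery will change that.
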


However we show that even this  weaker conjecture fails for many primes. 
In fact we provide a method to decide for which
primes~$p$ Conjectures~\ref{conml} and~\ref{conmlrat} hold and for
which they fail.  In some cases our method is unable to
make the desired decision. When the method works and decides that the
conjecture holds, it also determines the parameters $Q$ and
$(a_i,\mu_i)$ for those $i=\{0,\ldots,Q-1\}$ for which option~(L)
takes place.

Concerning Conjecture~\ref{conml}, we have:
\begin{theorem}
\label{thm5to283}
\begin{itemize}
\item[(i)] Conjecture \ref{conml} fails for $p\in [5,599]\backslash \{11,83,103,163,397\}$. 
\item[(ii)] Conjecture \ref{conml} holds for $p\in\{83, 397\}$ in the form 
$$
\nu_p(T_n)=\left\{\begin{matrix} \nu_p(n+c)+1, & {\text{\rm if}} & n\equiv -c\pmod {Q_p},~-c\in {\mathcal Z}_T;\\
0, & {\text{\rm otherwise,}}  & ~~~\\
\end{matrix}
\right. 
$$
with $Q_{83}=287$ and $Q_{397}=132$. 
\end{itemize} 
\end{theorem}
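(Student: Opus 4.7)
My plan is to establish a computational criterion that, given a prime $p$, decides whether Conjecture~\ref{conml} holds for $p$ and, in the affirmative case, extracts the data $(Q,a_i,\kappa_i,\mu_i)$; I then apply this criterion to each prime $p\in[5,599]$. The heart of the method is to realise $T$ as the restriction to $\Z$ of a $p$-adic analytic function on each residue class modulo a suitable period.

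First I would factor $P(X)=X^3-X^2-X-1$ over $\Q_p$, passing if necessary to a small unramified extension $K$ in which the roots $\lambda_1,\lambda_2,\lambda_3$ lie. Let $e=e(p)$ be the least common multiple of the multiplicative orders of the reductions $\bar\lambda_i$ in the residue field of $K$, so that $\lambda_i^e\in 1+\mathfrak{p}_K$ for every $i$. For each residue $r\in\{0,1,\ldots,e-1\}$, I define
$$
F_r(z):=\sum_{i=1}^{3}c_{\lambda_i}\lambda_i^r\,\exp\!\bigl(z\log\lambda_i^e\bigr),
$$
which is a $p$-adic analytic function $\Z_p\to\Q_p$ (the values lie in $\Q_p$ by Galois invariance) satisfying $F_r(n)=T(r+en)$ for every $n\in\Z$. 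By Strassmann's theorem each $F_r$ has only finitely many zeros in $\Z_p$, and the Newton polygon of its power-series expansion locates them rigorously together with their multiplicities.

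Next, letting $r$ range over $\{0,\ldots,e-1\}$, I would compile the finite list of pairs $(a,\mu)$ where $a\in\Z_p$ is a zero of some $F_r$ of multiplicity $\mu$. Near any such zero one has $\nu_p(T(r+en))=\kappa+\mu\,\nu_p(n-a)+O(1)$, so this list is precisely the candidate data for option~(L) of Conjecture~\ref{conml}. The conjecture then holds for $p$ if and only if every zero $a$ on the list lies in $\Z$, in which case necessarily $a\in\mathcal{Z}_T=\{0,-1,-4,-17\}$; the weaker Conjecture~\ref{conmlrat} holds if and only if every such $a$ lies in $\Q$. For the uniform form of part~(ii), one additionally requires each zero to be simple, so that $\mu_i=1$ and the formula reduces to $\kappa_i+\nu_p(n-a_i)$.

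Third, I would run the algorithm on each $p\in[5,599]$. Outside the exceptional set $\{11,83,103,163,397\}$, the computation exhibits at least one zero $a\in\Z_p\setminus\Z$, and the contradiction argument from the paragraph following Theorem~\ref{ththird} (build a sequence of integers $n_k\to a$ in $\Z_p$ with $\nu_p(T(n_k))\to\infty$, lying in any prescribed residue class modulo any $Q$) rules out Conjecture~\ref{conml} for~$p$, giving part~(i). For $p\in\{83,397\}$, the computation returns only zeros at $\{0,-1,-4,-17\}$, each simple. Part~(ii) then follows by choosing $Q_p$ as the smallest modulus for which the residue class of each integer zero $-c\in\mathcal{Z}_T$ is isolated modulo $Q_p$, and reading off from the Taylor expansion of the relevant $F_r$ at $a_i=-c$ that the constant term is $\kappa_i=1$; this produces $Q_{83}=287$ and $Q_{397}=132$.

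The principal obstacle is the rigorous certification of the zero-finding step. Newton-polygon arguments determine the number and valuations of the zeros of $F_r$ only up to the precision to which its coefficients are known, so one must expand each $F_r$ to sufficient $p$-adic precision that Strassmann's bound provably accounts for every zero, uniformly over $r\in\{0,\ldots,e(p)-1\}$ and over $p\in[5,599]$. For small primes with large~$e(p)$ the coefficients live in an unramified extension and decay slowly, so the required working precision can be substantial; the bulk of the work lies in establishing uniform precision estimates that turn the prime-by-prime case analysis into a finite, rigorous verification rather than a floating-point heuristic.
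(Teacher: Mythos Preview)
Your framework is the paper's: interpolate $T$ on each residue class modulo the period $N$ by the $p$-adic analytic functions $f_\ell$ (your $F_r$), invoke Strassmann to control their zeros, and appeal to the equivalence of Theorem~\ref{thstatements} (which you use implicitly). The execution, however, is heavier than what the paper actually does. The paper never computes a Newton polygon or approximates a zero to high precision: it observes that in every case needed the Strassmann number of $f_\ell$ is exactly~$1$, and that this is equivalent to the single congruence $T(\ell+N)\not\equiv T(\ell)\pmod{p^2}$ (condition~\eqref{egpzt}). Under that condition the unique zero $b$ satisfies $\ell+Nb\equiv u\pmod p$ with $u$ the explicit quantity~\eqref{econdbzer}, computable from two Tribonacci values mod $p^2$. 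Part~(i) then amounts to exhibiting, for each non-exceptional $p$, one $\ell$ with $u\not\equiv 0,-1,-4,-17\pmod p$ (Table~\ref{taells}); part~(ii) amounts to checking that for $p\in\{83,397\}$ the only $\ell$ with $p\mid T(\ell)$ are already congruent mod $N$ to an element of $\mathcal Z_T$, so the unique Strassmann zero is forced to be that integer, and the formula with $Q_p=N_p$ follows from $|\beta_1|_p=1$. Your general zero-certification scheme would work, but the paper needs nothing beyond precision $p^2$; the ``substantial working precision'' you anticipate never arises. (One small point you leave unaddressed: $p=11$ is excluded not by the zero analysis but because $P(X)$ ramifies there, so the exponential interpolation as written is unavailable.)
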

Note that our method does not handle the prime $p=11$. As for
$p\in \{103,163\}$, our method failed to decide whether
Conjecture~\ref{conml} holds.

Concerning Conjecture~\ref{conmlrat}, we have:
\begin{theorem}
\label{thm5to283rat}
\begin{itemize}
\item[(i)] Conjecture \ref{conmlrat} fails for $$p\in [5,599]\backslash \{11,47,53,83,103,163,269,397,401,419,499,587\}.$$
\item[(ii)] Conjecture \ref{conmlrat} holds for $p\in\{269, 401,419,499,587\}$ in the form
$$
\nu_p(T_n)=\left\{\begin{matrix} \nu_p(n+c)+1, & {\text{\rm if}} & n\equiv c\pmod {Q_p},\\
&& c\in \{0,-1,-4,-17,1/3,-5/3\};\\
0, & {\text{\rm otherwise,}}  & ~~~\\
\end{matrix}
\right.
$$
with $Q_{269}=268$, $Q_{401}=400$, $Q_{419}=418$, $Q_{499}=166$ and $Q_{587}=293$.
\end{itemize} 
\end{theorem}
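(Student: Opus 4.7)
The plan is to extend the $p$-adic analytic framework behind Theorem~\ref{thm5to283} so as to detect \emph{rational} (not merely integer) zeros of $T$. For each prime $p\in[5,599]$ we factor $P(X)=X^3-X^2-X-1$ over $\Q_p$ and work inside its splitting field $K_p$ with ring of integers $\calO_p$; each $\lambda\in\Lambda$ is a $p$-adic unit because $P(0)=-1$. We choose $Q=Q_p$ as a suitable multiple of the $\lcm$ of the multiplicative orders of the residues $\lambda_j\bmod\gerp$ in $\calO_p/\gerp$ (which divides $p^f-1$ for $f\in\{1,2,3\}$ the residue degree), keeping $Q$ coprime to $3$ when $p\equiv 2\pmod 3$ so that the rational zeros $1/3,-5/3$ from Theorem~\ref{ththird} correspond to distinguished classes modulo $Q$.

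For each $i\in\{0,1,\dots,Q-1\}$, substituting $n=i+Qm$ and expanding via the $p$-adic exponential and logarithm gives
$$
F_i(m)\;=\;T(i+Qm)\;=\;\sum_{\lambda\in\Lambda}c_\lambda\lambda^i\exp\bigl(m\log\lambda^Q\bigr)\;\in\;\calO_p[[m]],
$$
a power series convergent on $\Z_p$ (the choice of $Q$ ensures $\log\lambda^Q\in\gerp$), to which Strassmann's theorem and Weierstrass preparation apply. Writing $F_i(m)=U(m)\prod_j(m-\beta_j)^{d_j}$ with $U$ a unit and $\beta_j\in\overline{\Q}_p$, one has $\nu_p(T(i+Qm))=d_j\,\nu_p(m-\beta_j)+\kappa_j$ for $m\in\Z_p$ close to $\beta_j$, which matches option~(L) of Conjecture~\ref{conmlrat} precisely when $\beta_j\in\Q$. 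Hence Conjecture~\ref{conmlrat} holds for $p$ if and only if every $p$-adic zero $\beta_j$ arising from every $F_i$ is rational.

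The algorithm then proceeds class-by-class: we compute $F_i$ to sufficient $p$-adic precision, read off the zeros $\beta_j$ from the Newton polygon, and test each against rational candidates. For part~(ii), we verify that for $p\in\{269,401,419,499,587\}$ every $\beta_j$ corresponds (under the transformation $n=i+Qm$) to one of $c\in\{0,-1,-4,-17,1/3,-5/3\}$, each a simple zero, which yields the stated formula for $\nu_p(T_n)$ with the prescribed $Q_p$. For part~(i), for each prime in $[5,599]$ outside $\{11,47,53,83,103,163,269,397,401,419,499,587\}$ we exhibit an $i$ such that $F_i$ has a zero $\beta_j$ provably not equal to any admissible rational $a_i$.

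The decisive obstacle is certifying $p$-adic \emph{irrationality}: given a numerically computed zero $\beta\in\Z_p$, one must rule out $\beta=a/b$ for every rational $a/b$. Any rational zero would satisfy an identity of the form $\sum_\lambda c_\lambda\lambda^i(\lambda^{Q/b})^{a}=0$, which by $S$-unit / Baker-type arguments constrains the height of $(a,b)$; computing $\beta$ beyond the resulting precision then decides the question. When the height bound becomes impractically large or the available precision is insufficient, the method is inconclusive, which explains why $p=11$ cannot be handled and why a handful of primes (in particular $\{103,163\}$, and similarly some small primes in the rational setting) remain undecided.
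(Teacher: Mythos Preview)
Your overall framework---interpolating $T$ along residue classes by $p$-adic analytic functions $F_i$ and reducing Conjecture~\ref{conmlrat} to the rationality of their zeros---matches the paper's (this is Theorem~\ref{thstatements}). The genuine gap is in how you certify that a given zero is \emph{not} rational. You propose, for each prime, to run an unspecified $S$-unit/Baker argument to bound the height of any hypothetical rational zero and then compute $\beta$ past that precision. This is not carried out, and even if it were, Baker-type bounds are astronomical; a per-prime, per-class high-precision computation is not a feasible proof of part~(i). The paper avoids this entirely: it proves once and for all (Theorem~\ref{thtwrat}, via Lemma~\ref{ltwisted}) that any rational zero $b$ of $f_\ell$ must satisfy $\ell+Nb\in\calQ_T=\{0,-1,-4,-17,1/3,-5/3\}$. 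The irrationality test then collapses to a single congruence mod~$p$: one Hensel step produces $u\equiv\ell+Nb_0\pmod p$, and condition~\eqref{econdbq} checks $u\not\equiv 0,-1,-4,-17,1/3,-5/3\pmod p$. That, together with~\eqref{efirstcond} and~\eqref{egpzt}, is Theorem~\ref{thm:NoML}; Table~\ref{taells} lists the witnessing~$\ell$ for every prime in part~(i).

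Part~(ii) is handled the same way, without any precision argument: for $p\in\{269,401,419,499,587\}$ one verifies that every $\ell$ with $p\mid T(\ell)$ already satisfies $\ell\equiv a\pmod N$ for some $a\in\calQ_T$, and that~\eqref{egpzt} holds, so $f_\ell$ has a \emph{unique} zero by Proposition~\ref{prvan}, necessarily the obvious one coming from~$a$ (Theorem~\ref{thsuffmlrat}). Your Newton-polygon/Weierstrass description is compatible with this but does not supply the uniqueness criterion that makes the verification finite.

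Your diagnosis of the undecided primes is also off. For $p\in\{103,163\}$ the issue is not precision: for every admissible~$\ell$ the residue~$u$ happens to fall in $\calZ_T\bmod p$, so the mod-$p$ test~\eqref{econdbq} is simply inconclusive. For $p\in\{47,53\}$ the obstruction is that $-17$ coincides with $1/3$ or $-5/3$ modulo~$N$, so the classes cannot be separated. And $p=11$ is excluded structurally because it ramifies in the splitting field, so the $\exp/\log$ interpolation of Section~\ref{sfunction} is unavailable.
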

Note here that (again) our method does not apply to the prime
$p=11$. As for $p \in \{47,53,103,163\}$, our method failed to decide
whether Conjecture~\ref{conmlrat} holds.

\paragraph{Plan of the article}
In Section~\ref{srat} we introduce the basic notions of this article, those of twisted zeros and of rational zeros of the Tribonacci sequence.

In Section~\ref{spadic}  we recall the necessary tools from $p$-adic analysis. In Sections~\ref{sfunction}--\ref{sex} we apply these tools to study the Tribonacci sequence. In particular, Theorem~\ref{ththird} is proved in Section~\ref{sthfive}. In Section~\ref{sstatements} we give a $p$-adic analytic interpretation of Conjectures~\ref{conml} and~\ref{conmlrat}. Using it, we produce in Section~\ref{sex} easily verifiable sufficient conditions for both conjectures to hold and to fail. 

Theorems~\ref{thm:p=3} and~\ref{thm5to283} are proved in Section~\ref{sec:comput}, as application of the previous results together 
with some computations. 

The final Section~\ref{sheur} contains heuristics which suggest that if ${\mathcal ML}$ and ${\mathcal NMLR}$ are the sets of all primes such that Conjecture \ref{conml} holds and Conjecture \ref{conmlrat} and fails, respectively, then both ${\mathcal ML}$ and ${\mathcal NMLR}$ are infinite and maybe even of positive relative densities as subsets of all primes.

\paragraph{A convention}
Unless otherwise stated, all congruences such as
${x\equiv y \pmod N}$ and divisibility relations such as ${x\mid y}$
refer to the ring of rational integers~$\Z$.

We slightly abuse notation by writing ${x\equiv y \pmod N}$ with ${x,y\in \Q}$ if there exists ${m\in \Z}$ with ${\gcd(m,N)=1}$ such that ${mx,my\in \Z}$ and ${mx\equiv my \pmod N}$.

\section{Rational zeros of the Tribonacci sequence}
\label{srat}

As we mentioned in the introduction, ${T(n)=0}$ if and only if~$n$ belongs to the set  ${\calZ_T=\{0,-1,-4,-17\}}$. It turns out that, in a sense, the Tribonacci sequence also ``vanishes'' at some non-integral rational numbers.

\begin{proposition}
For some definition of the cubic roots 
\begin{equation}
\label{ecubicroots}
\lambda^{1/3} \qquad (\lambda\in \Lambda)
\end{equation} 
we have 
\begin{equation}
\label{etonethird}
\sum_{\lambda\in \Lambda} c_\lambda\lambda^{1/3}=0.
\end{equation} 
Similarly, for some definition of the cubic roots~\eqref{ecubicroots} we have  ${\sum_{\lambda\in \Lambda} c_\lambda\lambda^{-5/3}=0}$.
\end{proposition}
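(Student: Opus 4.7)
The plan is to reduce both claims to a single explicit product computation. Fix algebraic cube roots $\mu_\lambda \in \overline{\Q}$ with $\mu_\lambda^3 = \lambda$ for each $\lambda \in \Lambda$, and let $\omega$ denote a primitive cube root of unity. Every admissible choice of cube roots takes the form $(\omega^{\epsilon_\lambda}\mu_\lambda)_\lambda$ with $\epsilon \in (\Z/3\Z)^3$, giving $27$ candidate sums
\[
S(\epsilon) \;:=\; \sum_{\lambda \in \Lambda} c_\lambda\,\omega^{\epsilon_\lambda}\mu_\lambda.
\]
It suffices to show that $\prod_\epsilon S(\epsilon) = 0$: then some factor vanishes, delivering the required choice.

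I would evaluate the $27$-fold product in three stages, each using the identity $\prod_{\epsilon=0}^{2}(A + \omega^\epsilon B) = A^3 + B^3$. Taking the product first over $\epsilon_1$, the ``diagonal'' contribution is the Galois-invariant quantity $\mathcal A := \sum_\lambda c_\lambda^3\lambda$, while the off-diagonal terms are proportional to $\mu_i^2\mu_j$. Running the analogous symmetric-sum calculation over $\epsilon_2$ produces an expression in which each off-diagonal combination ($XY$, $X^3$, $Y^3$) has $\omega^{3\epsilon_3}$ in its $\omega$-exponent and therefore is independent of $\epsilon_3$, so the third product is simply a cube. After simplification the formula should collapse to
\[
\prod_\epsilon S(\epsilon) \;=\; \bigl(\mathcal A^3 - 27\,(c_1c_2c_3)^3\,\lambda_1\lambda_2\lambda_3\bigr)^{\!3}.
\]
Since $\lambda_1\lambda_2\lambda_3 = 1$ and $c_1c_2c_3 = 1/\!\prod_\lambda P'(\lambda) = 1/44$ (using $\mathrm{disc}(P) = -44$), vanishing reduces to the single identity $\mathcal A = 3/44$.

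To verify $\mathcal A = 3/44$ I would work inside the cubic field $K = \Q(\lambda_1)$, using $\lambda^3 = \lambda^2+\lambda+1$ to get $\lambda^4 = 2\lambda^2+2\lambda+1$ and, by a short symbolic expansion, $P'(\lambda)^3 = 4\,d(\lambda)$ with $d(X) := 4X^2 + 10X + 9$. Solving a $3\times 3$ linear system to invert $d(\lambda)$ modulo $P(\lambda)$ and then applying $\mathrm{Tr}_{K/\Q}$ (which sends $1,\lambda,\lambda^2$ to $3,1,3$) yields the three trace identities $\sum_\lambda \lambda^k/d(\lambda) = 3/11,\ 1/11,\ -1/11$ for $k = 0,1,2$; assembling, $\mathcal A = \tfrac14\bigl(2\cdot(-\tfrac{1}{11}) + 2\cdot\tfrac{1}{11} + \tfrac{3}{11}\bigr) = \tfrac{3}{44}$, as required.

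The companion statement about $\lambda^{-5/3}$ follows from the same argument, replacing $\mu_\lambda$ by $\mu_\lambda^{-5}$: because $-5 \equiv 1 \pmod 3$, rotating $\mu_\lambda$ by $\omega^{\epsilon_\lambda}$ induces the same rotation on $\mu_\lambda^{-5}$, so the product has the same structural form with $\mathcal A$ replaced by $\mathcal A^* := \sum_\lambda c_\lambda^3\lambda^{-5} = \sum_\lambda \lambda^{-2}/P'(\lambda)^3$, and the second factor is unchanged since $(\lambda_1\lambda_2\lambda_3)^{-5}=1$. Using $\lambda^{-2} = -\lambda^2 + 2\lambda$ together with the same three trace identities gives $\mathcal A^* = 3/44$. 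The principal obstacle is the clean symbolic collapse of the $27$-term product to the compact form of the second display, which requires careful bookkeeping of cross terms through two nested applications of the $A^3+B^3$ identity; once that identity is in hand, the equalities $\mathcal A = \mathcal A^* = 3/44$ are short Tribonacci-specific finite calculations.
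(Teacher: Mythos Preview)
Your approach is correct and rests on exactly the same two ingredients as the paper's proof: the factorization
\[
X_1^3+X_2^3+X_3^3-3X_1X_2X_3=\prod_{j=0}^{2}\bigl(X_1+\zeta^{j}X_2+\zeta^{-j}X_3\bigr)
\]
and the numerical identity $\mathcal A=\sum_\lambda c_\lambda^3\lambda = 3\,c_{\lambda_1}c_{\lambda_2}c_{\lambda_3}=3/44$ (together with its $\lambda^{-5}$ analogue). The paper, however, short-circuits your $27$-fold product. Because $\lambda_1\lambda_2\lambda_3=1$, one may pick the cube roots so that $\mu_1\mu_2\mu_3=1$; setting $\alpha_i=c_{\lambda_i}\mu_i$ then gives $\alpha_1\alpha_2\alpha_3=c_{\lambda_1}c_{\lambda_2}c_{\lambda_3}$ outright, hence $F(\alpha_1,\alpha_2,\alpha_3)=\mathcal A-3c_{\lambda_1}c_{\lambda_2}c_{\lambda_3}=0$ in one line, and a \emph{single} use of the three-term factorization above finishes. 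Your inner quantity factors as $\mathcal A^3-27(c_{\lambda_1}c_{\lambda_2}c_{\lambda_3})^3=\prod_j(\mathcal A-3\zeta^{j}c_{\lambda_1}c_{\lambda_2}c_{\lambda_3})$, so the vanishing of your $27$-fold product is equivalent to the paper's identity; the outer cube and the two nested $A^3+B^3$ reductions you describe are bookkeeping that the normalization $\mu_1\mu_2\mu_3=1$ eliminates.
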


\begin{proof}
Consider the polynomial 
$$
F(X_1,X_2,X_3)=X_1^3+X_1^3+X_1^3-3X_1X_2X_3 \in \Z[X_1,X_2,X_3]. 
$$
Write again ${\Lambda=\{\lambda_1,\lambda_2,\lambda_3\}}$. Define somehow the cubic roots ${\lambda_1^{1/3},\lambda_2^{1/3}}$ and set ${\lambda_3^{1/3}= (\lambda_1^{1/3}\lambda_2^{1/3})^{-1}}$. Now define
$$
\alpha_i= c_{\lambda_i}\lambda_i^{1/3}, \quad \beta_i= c_{\lambda_i}\lambda_i^{-5/3} \qquad (i=1,2,3). 
$$
A direct verification shows that 
$$
F(\alpha_1,\alpha_2,\alpha_3)=\sum_{\lambda\in \Lambda}c_\lambda^3\lambda-3\prod_{\lambda\in \Lambda}c_\lambda=0, 
$$
and, similarly, ${F(\beta_1,\beta_2,\beta_3)= 0 }$. 
Since ${F(X_1,X_2,X_3)}$ factors as 
$$
F(X_1,X_2,X_3)=(X_1+X_2+X_3)(X_1+\zeta X_2+\overline\zeta X_3)(X_1+\overline\zeta X_2+\zeta X_3), 
$$
where ${\zeta,\overline\zeta}$ are the primitive cubic roots of unity, the result follows. 
\end{proof}

Call ${r\in \Q}$  a \textit{rational zero} of~$T$ 
if for some definition of the rational powers 
$\lambda_1^r,\lambda_2^r,\lambda_3^r$ 
we have ${\sum_{i=1}^3c_{\lambda_i}\lambda_i^r=0}$. 

More generally, call ${r\in \Q}$  a \textit{twisted rational zero} of~$T$ if for some definition of the rational powers 
$\lambda_1^r,\lambda_2^r,\lambda_3^r$
and for some roots of unity $\xi_1,\xi_2,\xi_3$, 
we have ${\sum_{i=1}^3\xi_ic_{\lambda_i}\lambda_i^r=0}$.

We denote $\calQ_T$ the set of twisted rational zeros of~$T$. Clearly, ${\calZ_T\subset\calQ_T}$ and ${1/3,-5/3\in \calQ_T}$. It turns out that~$T$ has no other twisted rational zeros.

\begin{theorem}
\label{thtwrat}
We have 
$$
\calQ_T=\calZ_T\cup\{1/3,-5/3\}=\{0,-1,-4,-17,1/3,-5/3\}.
$$
Moreover, if  ${r\in \calQ_T}$ and the powers $\lambda_1^r,\lambda_2^r,\lambda_3^r$ are suitably defined, then for the roots of unity $\xi_1,\xi_2,\xi_3$ satisfying ${\sum_{i=1}^3\xi_ic_{\lambda_i}\lambda_i^r=0}$ we have ${\xi_1=\xi_2=\xi_3}$. 
\end{theorem}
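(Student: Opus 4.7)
The plan is to reduce the problem to integer zeros of a parametric linear recurrence and then combine Galois-theoretic and Archimedean considerations with the algebraic identity used in the preceding proposition.

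First, writing $r=a/b$ in lowest terms with $b\ge 1$, fix once and for all any $b$-th roots $\mu_i=\lambda_i^{1/b}$. Any other choice of $\lambda_i^r$ equals $\mu_i^a$ times a $b$-th root of unity, which can be absorbed into $\xi_i$. So $r\in\calQ_T$ is equivalent to the existence of roots of unity $\xi_1,\xi_2,\xi_3$ with
$$
U(a):=\sum_{i=1}^3\xi_ic_{\lambda_i}\mu_i^a=0.
$$
The recurrence $U(n)$ has characteristic roots $\mu_i$ satisfying $|\mu_1|>1>|\mu_2|=|\mu_3|$, hence is non-degenerate.

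For the ``moreover'' statement I would exploit $\Gal(L/\Q)\cong S_3$, where $L$ is the splitting field of $P$. Choosing $\mu_1\in\R$ and $\mu_3=\overline{\mu_2}$, complex conjugation sends $U(a)=0$ to a second linear relation among the $x_i:=c_{\lambda_i}\mu_i^a$; a cyclic $3$-permutation $\tau\in\Gal(L/\Q)$ of the $\lambda_i$ yields a third, with auxiliary $b$-th roots of unity $\omega_i=\tau(\mu_i)/\mu_{\tau(i)}$ which we absorb by re-selecting the $\mu_i$ (this is exactly the ``suitably defined'' freedom in the statement). A non-trivial common solution $(x_1,x_2,x_3)\ne 0$ forces the $3\times 3$ coefficient matrix to be singular; analyzing the resulting polynomial equation in the $\xi_i$ and $\omega_i^a$ (all roots of unity, hence algebraic integers of absolute value $1$, to which Kronecker-type rigidity applies) should force $\xi_1=\xi_2=\xi_3$.

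Once $\xi_i=\xi$ for all $i$, the problem reduces to determining the integer zeros of $W_b(n):=\sum_i c_{\lambda_i}\mu_i^n$. Since $\mu_i^b=\lambda_i$, we have $W_b(bk)=T(k)$, giving the ``trivial'' zeros $a=bk$ for $k\in\calZ_T$. For $b=3$, the factorization
$$
X_1^3+X_2^3+X_3^3-3X_1X_2X_3=\prod_{\zeta^3=1}(X_1+\zeta X_2+\zeta^{-1}X_3)
$$
combined with the numerical identity in the preceding proposition supplies the extra zeros $a=1,-5$ of $W_3$, corresponding to $r=1/3,-5/3$. An Archimedean estimate $|W_b(a)|\ge|c_{\lambda_1}||\mu_1|^a-2|c_{\lambda_2}||\mu_2|^a$ for $a\ge 0$ (analogously for $a<0$) confines any remaining zero to a bounded range, which is ruled out by a finite computation.

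The main obstacle is showing no new zero appears for $b\ge 4$. The expected mechanism is a stronger Galois argument: enlarging $L$ by $\zeta_b$ and all $b$-th roots of the $\lambda_i$, the Galois group grows and the system obtained by averaging $U(a)=0$ over its elements becomes over-determined. I expect this to rely on the Kummer structure of $L(\mu_1,\mu_2,\mu_3,\zeta_b)/\Q$ together with the fact that $\zeta_3\notin L$ (since $\mathrm{disc}(P)=-44$ puts $\Q(\sqrt{-11})$, not $\Q(\sqrt{-3})$, inside $L$), so that only $b\in\{1,3\}$ can produce the necessary cancellation; the $p$-adic analytic tools developed in later sections may be a cleaner alternative to this step.
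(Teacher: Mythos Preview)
Your proposal is an outline with acknowledged holes rather than a proof, and the holes are exactly where the real work lies. Two concrete gaps:

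\textbf{The ``moreover'' clause.} Your $3\times3$ singularity argument is not carried out. The cyclic element $\tau\in\Gal(L/\Q)$ acts on the $\lambda_i$, but the $\mu_i=\lambda_i^{1/b}$ and the $\xi_i$ live in a larger field $L(\zeta_b,\mu_1,\mu_2,\mu_3)$; an extension of~$\tau$ to that field need not permute the $\mu_i$ among themselves, and the fudge factors $\omega_i=\tau(\mu_i)/\mu_{\tau(i)}$ cannot simply be ``absorbed by re-selecting the $\mu_i$'' without losing the normalisation $\mu_3=\overline{\mu_2}$ that you need for the complex-conjugation row. Even granting three independent linear relations, the determinant equation you get is a polynomial relation among several roots of unity, and ``Kronecker-type rigidity'' does not by itself force $\xi_1=\xi_2=\xi_3$; one has to classify vanishing sums of roots of unity or argue as in Lemma~\ref{lroots}, which in the rational case is genuinely more delicate.

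\textbf{Bounding the denominator.} Your per-$b$ strategy produces a different recurrence $W_b$ for every $b\ge1$, and your Archimedean bound only confines zeros of $W_b$ to a range that grows with~$b$. So the ``finite computation'' is not finite until $b$ itself is bounded, and that is precisely the step you say you cannot do (``I expect this to rely on the Kummer structure\ldots''). The speculation that only $b\in\{1,3\}$ survives is plausible but unproved; note that $b=2$ is not even addressed.

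For comparison, the paper does not prove Theorem~\ref{thtwrat} here either: it defers the full argument to~\cite{LRS} and only records the strategy. That strategy differs from yours in one essential respect. A Galois-theoretic argument (more elaborate than Lemma~\ref{lroots}) is used to reduce the whole problem---all denominators at once---to the \emph{integer} zeros of a \emph{single} linear recurrence, of order~$4$ rather than~$3$. Those zeros are then pinned down by linear forms in logarithms together with Baker--Davenport reduction. The point of passing to one fixed recurrence is exactly to avoid the infinite family $\{W_b\}_{b\ge1}$ that blocks your approach; Baker's method then gives an effective bound on all zeros in one stroke, with no separate denominator analysis required.
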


The full proof of this theorem will appear in~\cite{LRS}. Now we will  prove only a weaker version of this theorem, addressing  twisted integral zeros. 

\begin{theorem}
\label{thtwint}
Let ${n\in \Z}$ and ${\xi_1,\xi_2,\xi_3}$ roots of unity such that ${\sum_{i=1}^3\xi_ic_{\lambda_i}\lambda_i^n=0}$. Then ${n\in \calZ_T}$ and ${\xi_1=\xi_2=\xi_3=1}$. In particular, the only twisted integral zeros of the Tribonacci sequence are its actual zeros   $0,-1,-4,-17$. 
\end{theorem}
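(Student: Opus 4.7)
The plan is to leverage Galois symmetry to reduce the hypothesis to a circulant linear system in~$(\xi_i)$, extract a rigid algebraic identity from the non-degenerate case, and refute that identity by a local computation. Let $K=\mathbb{Q}(\lambda_1,\lambda_2,\lambda_3)$ be the splitting field of~$P$. Since $\operatorname{disc}(P)=-44$ is not a square, $\Gal(K/\mathbb{Q})\cong S_3$, and its unique quadratic subfield (the fixed field of $A_3$) is $\mathbb{Q}(\sqrt{-11})$. Fix a generator $\sigma$ of $A_3$, cyclically permuting $\lambda_1\to\lambda_2\to\lambda_3\to\lambda_1$. Setting $L=K(\omega,\xi_1,\xi_2,\xi_3)$ with $\omega$ a primitive cube root of unity, the Kronecker--Weber observation that $K\cap\mathbb{Q}^{\mathrm{cyc}}\subseteq\mathbb{Q}(\sqrt{-11})$, combined with $\sigma|_{\mathbb{Q}(\sqrt{-11})}=\mathrm{id}$, lets us lift $\sigma$ to an automorphism of~$L$ fixing every root of unity. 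Applying $\sigma$ and $\sigma^2$ to the hypothesis then yields the augmented system $M\mathbf{A}=0$, where~$M$ is the circulant matrix with first row $(\xi_1,\xi_2,\xi_3)$ and $\mathbf{A}=(c_{\lambda_1}\lambda_1^n,c_{\lambda_2}\lambda_2^n,c_{\lambda_3}\lambda_3^n)^T$.

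Next I diagonalize~$M$ via the discrete Fourier transform. The eigenvectors $e_k=(1,\omega^k,\omega^{2k})^T$, $k=0,1,2$, come with eigenvalues $\chi_k=\xi_1+\omega^k\xi_2+\omega^{2k}\xi_3$, and writing $\mathbf{A}=\sum_k v_k e_k$ gives $v_0=T(n)/3$, $v_2=S(n)/3$, $v_1=S'(n)/3$, where $S(n)=\sum_i\omega^{i-1}c_{\lambda_i}\lambda_i^n$ and $S'$ is the $\Gal(L/K)$-conjugate under $\omega\leftrightarrow\omega^2$. The system $M\mathbf{A}=0$ becomes $\chi_k v_k=0$ for every~$k$, and Galois conjugacy of $S$ and $S'$ gives $v_1=0\Leftrightarrow v_2=0$. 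The case $T(n)=0$ (i.e.\ $n\in\calZ_T$) is disposed of quickly: the nonvanishing of~$\mathbf{A}$ forces $v_1,v_2\neq0$, so $\chi_1=\chi_2=0$, and adding and subtracting those two linear relations yields $\xi_1=\xi_2=\xi_3$, as required.

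The substantive case is $T(n)\neq 0$, where $\chi_0=0$ is forced. If any $v_k$ with $k\geq 1$ were nonzero, all three $\chi_k$ would vanish (the $k=0$ one being forced, the others by Galois conjugacy), making~$M$ the zero matrix and contradicting $\xi_i\neq 0$. Hence $v_1=v_2=0$, so $\mathbf{A}=v_0(1,1,1)^T$, meaning $c_{\lambda_1}\lambda_1^n=c_{\lambda_2}\lambda_2^n=c_{\lambda_3}\lambda_3^n$. Passing to moduli and using $|\lambda_2|^2=1/\lambda_1$ (which follows from $\lambda_1\lambda_2\lambda_3=1$ and $\lambda_3=\overline{\lambda_2}$), I obtain $\lambda_1^{3n/2}=|c_{\lambda_2}|/c_{\lambda_1}$; squaring and invoking $c_{\lambda_1}c_{\lambda_2}c_{\lambda_3}=1/44$ together with $c_{\lambda_1}=\lambda_1/P'(\lambda_1)$ converts this into the algebraic identity $P'(\lambda_1)^3=44\lambda_1^{3n+3}$ in $\mathbb{Z}[\lambda_1]$.

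The main obstacle is refuting this last identity for every $n\in\mathbb{Z}$. Since $\lambda_1$ is a unit of $\mathbb{Z}[\lambda_1]$ (its norm to $\mathbb{Q}$ equals~$1$), the identity reduces, at the level of ideals in $\calO_{\mathbb{Q}(\lambda_1)}$, to the $n$-independent equality $(P'(\lambda_1))^3=(44)$. I propose refuting this by a local computation at the ramified prime~$11$. The factorization $P(X)\equiv(X-7)^2(X-9)\pmod{11}$ yields $(11)=\gerp^2\gerp'$ with $\gerp=(11,\lambda_1-7)$ and $\gerp'=(11,\lambda_1-9)$, and $P'(9)\equiv 4\pmod{11}$ shows that $\gerp'$ does not divide $P'(\lambda_1)$. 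Therefore $v_{\gerp'}((P'(\lambda_1))^3)=0$ while $v_{\gerp'}((44))=v_{\gerp'}((11))=1$, and this contradiction closes out the only remaining case, completing the proof of Theorem~\ref{thtwint}.
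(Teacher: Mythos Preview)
Your argument is correct and reaches the same conclusion as the paper, but by a genuinely different route. The paper isolates a general lemma (Lemma~\ref{lroots}) asserting that if~$\alpha$ is cubic with $\Q(\alpha)/\Q$ non-Galois and the splitting field free of primitive cube roots, then ${\alpha_1\xi_1+\alpha_2\xi_2+\alpha_3\xi_3=0}$ forces ${\xi_1=\xi_2=\xi_3}$; the proof is a short case analysis using order-two elements of the Galois group. Lemma~\ref{lgam} then checks these hypotheses for ${\alpha=\lambda^n/P'(\lambda)}$, the key point being that ${\alpha\notin\Q}$, established by a one-line archimedean computation: equating ${\lambda_1^n/P'(\lambda_1)=\lambda_2^n/P'(\lambda_2)}$ and taking absolute values solves for~$n$ as a non-integer real number. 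Your approach replaces the case analysis by a cleaner structural argument (lifting the $3$-cycle to~$L$, circulant system, DFT diagonalization), which transparently separates the two cases ${T(n)=0}$ and ${T(n)\ne0}$. In the latter case you recover exactly the obstruction ${A_1=A_2=A_3}$ that the paper rules out archimededanly; you instead pass through the identity ${P'(\lambda_1)^3=44\,\lambda_1^{3n+3}}$ and refute it $11$-adically via ${(11)=\gerp^2\gerp'}$ and ${\gerp'\nmid P'(\lambda_1)}$. Both refutations are valid; the paper's is shorter (a single logarithm), while yours has the virtue of being purely algebraic and $n$-independent once the ideal equation $(P'(\lambda_1))^3=(44)$ is extracted. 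The DFT framework you set up would also generalize more readily to longer recurrences, whereas the paper's Lemma~\ref{lroots} is tailored to degree three.
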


\begin{remark}
Of course, twisted zeros can be defined for any linear recurrent sequence, not just of the Tribonacci sequence: if $U(n)$ is a linear recurrence with Binet expansion 
$$
U(n)=P_1(n)\gamma_1^n+\cdots + P_s(n) \gamma_s^n
$$
(where ${\gamma_1, \ldots, \gamma_s}$ are non-zero algebraic numbers
and $P_1, \ldots, P_s$ are polynomials with algebraic coefficients),
then we call ${r\in \Q}$ a twisted rational zero of~$U$ if for some
definition of the powers ${\gamma_1^r, \ldots, \gamma_s^r}$ and some
roots of unity ${\xi_1, \ldots, \xi_s}$ we have
${\xi_1P_1(r)\gamma_1^r+\cdots +\xi_sP_s(r)\gamma_s^r=0}$. Note that
the analogue of Theorem~\ref{thtwint} does not hold for any linear recurrent sequence. For instance,  the binary sequence ${U(n) =2^n+1^n }$ has no integral zeros, but it has a twisted zero at ${n=0}$, the relevant roots of unity being~$1$ and $-1$:
$$
1\cdot 2^0+(-1)\cdot 1^0=0. 
$$
\end{remark}

For the proof of Theorem~\ref{thtwint} we need some lemmas. 

\begin{lemma}
\label{lroots}
Let~$\alpha$ be an algebraic number of degree~$3$. Assume that $\Q(\alpha)$ is not a Galois extension of~$\Q$.
Let ${\alpha_1(=\alpha), \alpha_2, \alpha_3}$ be the conjugates of~$\alpha$ over~$\Q$. Assume further that the field ${\Q(\alpha_1,\alpha_2,\alpha_3)}$ does not contain primitive cubic roots of unity.  

Let ${\xi_1,\xi_2,\xi_3}$ be roots of unity such that 
$$
\alpha_1\xi_1+\alpha_2\xi_2+\alpha_3\xi_3=0.
$$
Then ${\xi_1=\xi_2=\xi_3}$ and hence ${\alpha_1+\alpha_2+\alpha_3=0}$. 
\end{lemma}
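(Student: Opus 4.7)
The plan is to exploit the two hypotheses---that $K:=\Q(\alpha_1,\alpha_2,\alpha_3)$ is not abelian over $\Q$ (so its Galois group is $S_3$) and that the primitive cube root of unity $\omega$ lies outside $K$---to produce a Galois automorphism that fixes every $\xi_i$ while cyclically permuting the $\alpha_j$. Applying it and its square to the given relation will yield a $3\times 3$ linear system $C(\xi_1,\xi_2,\xi_3)^{T}=0$ whose determinant factors, up to sign, as $(\alpha_1+\alpha_2+\alpha_3)(\alpha_1+\omega\alpha_2+\omega^2\alpha_3)(\alpha_1+\omega^2\alpha_2+\omega\alpha_3)$. The hypothesis $\omega\notin K$ will force the last two factors to be nonzero, so the trace must vanish and the kernel of $C$ must be one-dimensional; this collapses to $\xi_1=\xi_2=\xi_3$.

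The main work is the construction of that automorphism, and it is where the non-Galois hypothesis is essential. Since $\Q(\alpha)/\Q$ is not Galois, $K$ is its Galois closure with $\Gal(K/\Q)\cong S_3$, whose proper nontrivial normal subgroup $A_3$ has fixed field the quadratic subfield $F$, making $F$ the maximal abelian subextension of $K$. Letting $M=\Q(\xi_1,\xi_2,\xi_3)$---abelian over $\Q$ as a cyclotomic field---the intersection $K\cap M$ lies in $F$, and any 3-cycle $\tau\in A_3$ fixes $F$ and hence $K\cap M$. Standard Galois theory then produces $\sigma\in\Gal(KM/\Q)$ with $\sigma|_K=\tau$ and $\sigma|_M=\mathrm{id}$, so $\sigma$ fixes every $\xi_i$. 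If $\Q(\alpha)/\Q$ were Galois, then $K$ would be abelian and lie inside some cyclotomic field, leaving no nontrivial $\sigma$ available; that is the substantive obstacle the hypothesis removes.

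Choose $\tau\colon\alpha_1\mapsto\alpha_2\mapsto\alpha_3\mapsto\alpha_1$ and, after dividing the given relation by $\xi_3$ and relabeling, assume $\xi_3=1$. Applying $\mathrm{id},\sigma,\sigma^2$ produces the system $C(\xi_1,\xi_2,1)^{T}=0$ where $C$ is the matrix whose rows are the three cyclic shifts of $(\alpha_1,\alpha_2,\alpha_3)$. A direct expansion gives $\det C=-(\alpha_1^3+\alpha_2^3+\alpha_3^3-3\alpha_1\alpha_2\alpha_3)$, which factors as claimed via the classical identity $x^3+y^3+z^3-3xyz=(x+y+z)(x+\omega y+\omega^2 z)(x+\omega^2 y+\omega z)$. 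Vanishing of either non-trace factor would place $\omega$ in $K$---for instance $\alpha_1+\omega\alpha_2+\omega^2\alpha_3=0$ gives $\omega=(\alpha_3-\alpha_1)/(\alpha_2-\alpha_3)$---so the trace $\alpha_1+\alpha_2+\alpha_3$ must vanish. To pin down the kernel I would check that the $2\times 2$ minor $\alpha_1\alpha_3-\alpha_2^2$ is nonzero: its vanishing forces $\alpha_2^3\in\Q$, so the minimal polynomial of $\alpha_2$ would be $X^3-\alpha_2^3$, whose splitting field contains $\omega$---another contradiction. Hence $C$ has rank $2$, its kernel already contains $(1,1,1)^{T}$ by the trace relation, and so $(\xi_1,\xi_2,1)^{T}$ must be a scalar multiple of $(1,1,1)^{T}$; comparing third entries forces $\xi_1=\xi_2=1$, i.e.\ in the original notation $\xi_1=\xi_2=\xi_3$.
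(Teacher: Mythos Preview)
Your proof is correct and takes a genuinely different route from the paper's. The paper first shows that $\xi_1,\xi_2\in K$ (hence in the quadratic subfield $K_0$) by applying a nontrivial element of $\Gal(K(\xi_1,\xi_2)/K)$ and deriving that otherwise $\alpha_1/\alpha_2\in K_0$, which forces $(\alpha_1/\alpha_2)^3=1$. It then applies the two transpositions of $S_3$ to the relation, obtaining first $\xi_2^\iota=\xi_1$ and then $\xi_1^3=1$, whence $\xi_1=\xi_2=1$.

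Your argument bypasses the question of whether the $\xi_i$ lie in $K$ entirely: using that $K\cap M$ sits in the maximal abelian subfield $F$, you lift a $3$-cycle of $\Gal(K/\Q)$ to an automorphism of $KM$ that fixes all $\xi_i$, and then reduce everything to the rank of a circulant matrix. This is more structural and linear-algebraic; the paper's approach is a hands-on case analysis with order-$2$ elements. Your method has the advantage that the determinant factorisation $x^3+y^3+z^3-3xyz=(x+y+z)(x+\omega y+\omega^2 z)(x+\omega^2 y+\omega z)$ makes the role of the hypothesis $\omega\notin K$ completely transparent, and the rank computation via the minor $\alpha_1\alpha_3-\alpha_2^2$ (whose vanishing would again force $\omega\in K$) is a clean way to pin down the kernel. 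The paper's approach, on the other hand, avoids any linear algebra and stays entirely within manipulations of the single relation under various Galois elements.
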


\begin{proof}
We may assume that ${\xi_3=1}$, so that
${\alpha_1\xi_1+\alpha_2\xi_2+\alpha_3=0}$. We want to prove that ${\xi_1=\xi_2=1}$. 

Denote ${{\mathbb K}=\Q(\alpha_1,\alpha_2,\alpha_3)}$ and
let~${\mathbb K}_0$ be the unique quadratic subfield of~${\mathbb K}$; note that~${\mathbb K}_0$ is the maximal abelian subfield of~${\mathbb K}$. 

Assume for a contradiction that
${\{\xi_1,\xi_2\}\not\subset {\mathbb K}}$.  Then there is a
non-trivial element
$\sigma\in {\Gal({\mathbb K}(\xi_1,\xi_2)/{\mathbb K})}$. We have
${\alpha_1\xi_1^\sigma+\alpha_2\xi_2^\sigma+\alpha_3=0}$ and, without
loss of generality, ${\xi_1^\sigma\ne \xi_1}$. It follows that
${\eta := \alpha_1/\alpha_2 =
  -(\xi_2-\xi_2^\sigma)/(\xi_1-\xi_1^\sigma)}$.  In particular,~$\eta$
belongs to an abelian field and so ${\eta\in {\mathbb K}_0}$.  But the
elements of~${\mathbb K}_0$ are fixed by a cyclic permutation of
${\alpha_1,\alpha_2,\alpha_3}$. Hence
${\eta= \alpha_1/\alpha_2=\alpha_2/\alpha_3=\alpha_3/\alpha_1}$. It
follows that
${\eta^3=(\alpha_1/\alpha_2)(\alpha_2/\alpha_3)(\alpha_3/\alpha_1)=1}$,
contradicting the hypothesis that $\mathbb{K}$ contain no primitive
cubic roots of unity.  We conclude that $\xi_1$ and~$\xi_2$ belong
to~${\mathbb K}$ and hence also to~${\mathbb K}_0$.

Observe that any element of $\Gal({\mathbb K}/\Q)$ of order 2
restricts to the non-trivial element $\iota$ of $\Gal({\mathbb K}_0/\Q)$.
Consider first the element of $\Gal({\mathbb K}/\Q)$ that
switches ${\alpha_1,\alpha_2}$ and fixes~$\alpha_3$.  Now we
have ${\alpha_1\xi_2^\iota+\alpha_2\xi_1^\iota+\alpha_3=0}$. If
${\xi_2^\iota\ne \xi_1}$ then
${\alpha_1/\alpha_2 = (\xi_1^\iota-\xi_2)/(\xi_1-\xi_2^\iota)\in
  {\mathbb K}_0}$, and we finish as before. Thus,
${\xi_2^\iota= \xi_1}$ (and ${\xi_1^\iota= \xi_2}$). Applying next the
element that switches ${\alpha_1,\alpha_3}$ and fixes~$\alpha_2$, we
obtain ${\alpha_1+ \alpha_2\xi_1+ \alpha_3\xi_2=0}$. Multiplying by
${\xi_1=\xi_2^{-1}}$, we get
${\alpha_1\xi_1+ \alpha_2\xi_1^2+ \alpha_3=0}$. Hence
${\alpha_2(\xi_1^2-\xi_2)=0}$, which shows that
${\xi_1^2=\xi_2=\xi_1^{-1}}$. Thus, ${\xi_1^3=1}$, which implies that
${\xi_1=1}$ by our hypothesis. Hence ${\xi_2=\xi_1^\iota=1}$ as well,
and we are done.
\end{proof}

\begin{lemma}
\label{lgam}
Let~$\lambda$ be a root of ${P(X)=X^3-X^2-X-1}$, and ${n\in \Z}$. Then ${\alpha=\lambda^n/P'(\lambda)}$ satisfies the hypothesis of Lemma~\ref{lroots}. 
\end{lemma}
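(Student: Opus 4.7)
The hypotheses of Lemma~\ref{lroots} that I need to verify for $\alpha = \lambda^n/P'(\lambda)$ are threefold: (i) $[\Q(\alpha):\Q] = 3$, (ii) $\Q(\alpha)/\Q$ is not Galois, and (iii) the splitting field $\mathbb{K} = \Q(\lambda_1,\lambda_2,\lambda_3)$ does not contain a primitive cubic root of unity. The plan is to control everything through the discriminant of~$P$. A direct computation gives $\mathrm{disc}(P) = -44 = -2^2 \cdot 11$, which is not a rational square, so $\Gal(\mathbb{K}/\Q) \simeq S_3$, $[\mathbb{K}:\Q] = 6$, and the unique quadratic subfield of~$\mathbb{K}$ is $\Q(\sqrt{-11})$.

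Conditions (i) and (ii) reduce to showing $\alpha \notin \Q$: since $\alpha \in \Q(\lambda)$ and $[\Q(\lambda):\Q] = 3$ is prime, either $\alpha \in \Q$ or $\Q(\alpha) = \Q(\lambda)$, and in the latter case $\Q(\alpha)$ is automatically non-Galois of degree~$3$ (because $|\Gal(\mathbb{K}/\Q)|=6$). To rule out $\alpha \in \Q$, the key observation is that $\alpha \in \Q$ would force $\alpha_1 = \alpha_2 = \alpha_3 = \alpha$, hence $\alpha^3 = \prod_{i=1}^{3}\alpha_i$. By Vieta, $\prod_i \lambda_i = 1$, so $\prod_i \lambda_i^n = 1$ for every ${n\in \Z}$. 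Using the factorisation $P'(X) = (3X+1)(X-1)$ together with the identity $\prod_i(\lambda_i - a) = -P(a)$, a short calculation gives
$$
\prod_{i=1}^{3} P'(\lambda_i) \;=\; 3^3 \cdot \bigl(-P(-1/3)\bigr) \cdot \bigl(-P(1)\bigr) \;=\; 27 \cdot \tfrac{22}{27} \cdot 2 \;=\; 44.
$$
Hence $\prod_i \alpha_i = 1/44$, which is not the cube of any rational number, contradicting $\alpha \in \Q$.

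For condition (iii), I argue by contradiction: suppose $\zeta_3 \in \mathbb{K}$. Then $\Q(\zeta_3) = \Q(\sqrt{-3}) \subset \mathbb{K}$; since this extension is abelian over~$\Q$, it must lie inside the maximal abelian subfield of~$\mathbb{K}$. Because the commutator subgroup of~$S_3$ is~$A_3$, that maximal abelian subfield is precisely the unique quadratic extension $\Q(\sqrt{-11})$ identified above. But $\Q(\sqrt{-3}) \ne \Q(\sqrt{-11})$, since $-3$ and $-11$ differ by a non-square in $\Q^\times$, giving the contradiction. The only step demanding any real care is the product computation for (i)--(ii); everything else is routine $S_3$-Galois theory.
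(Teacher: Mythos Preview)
Your proof is correct, but both main steps proceed along a different route from the paper's.

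For the claim $\alpha\notin\Q$, the paper uses an \emph{archimedean} argument: if all three $\lambda_i^n/P'(\lambda_i)$ coincide, then equating the real root $\lambda_1$ with a complex root $\lambda_2$ and taking absolute values forces
\[
n=\frac{\log|P'(\lambda_2)/P'(\lambda_1)|}{\log|\lambda_1/\lambda_2|}\approx -0.718,
\]
which is not an integer. Your argument is purely algebraic: compute $\prod_i\alpha_i = (\prod_i\lambda_i)^n/\prod_iP'(\lambda_i)=1/44$ and observe this is not a rational cube. Your version avoids numerical computation entirely and makes the argument uniform in~$n$; the paper's version is shorter but relies on a decimal check.

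For the absence of primitive cube roots, the paper invokes ramification: since $3\nmid\mathrm{disc}(P)=-44$, the prime~$3$ is unramified in~$\mathbb{K}$, whereas it ramifies in $\Q(\zeta_3)$. You instead identify the unique quadratic subfield as $\Q(\sqrt{-11})$ and note $\Q(\sqrt{-3})\ne\Q(\sqrt{-11})$. These are two faces of the same discriminant computation; yours is slightly more self-contained for a reader without ramification theory at hand.
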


\begin{proof}
Clearly, ${\alpha \in \Q(\lambda)}$, which is field of degree~$3$. If ${\Q(\alpha)\ne \Q(\lambda)}$ then ${\alpha \in \Q}$. Hence, denoting ${\lambda_1,\lambda_2,\lambda_3}$ the roots of $P(X)$, the three numbers ${\lambda_i^n/P'(\lambda_i)}$ must be equal. In particular, if~$\lambda_1$ is the real root, and $\lambda_2,\lambda_3$ are the complex conjugate roots, then
${\lambda_1^n/P'(\lambda_1)=\lambda_2^n/P'(\lambda_2)}$, which implies that 
$$
n= \frac{\log|P'(\lambda_2)/P'(\lambda_1)|}{\log|\lambda_1/\lambda_2|} = -0.718\dots\notin\Z,
$$
a contradiction.

Thus, ${\Q(\alpha)=\Q(\lambda)}$ is not a Galois extension of~$\Q$. It remains to note that its Galois closure ${\Q(\lambda_1,\lambda_2,\lambda_3)}$ may not contain primitive cubic roots of unity, because prime~$3$ is not ramified therein. 
\end{proof}

\begin{proof}[Proof of Theorem~\ref{thtwint}]
If ${\sum_{i=1}^3\xi_ic_{\lambda_i}\lambda_i^n=0}$ then the above lemmas imply that ${\xi_1=\xi_2=\xi_3}$. Hence ${T(n)=0}$, and we are done. 
\end{proof}

Theorem~\ref{thtwrat} is proved in two steps. Using a Galois-theoretic argument similar to that of Lemma~\ref{lroots}, but more involved, one reduces the problem to finding actual integral zeros of another linear recurrence, of order~$4$. Those are determined using standard technique, with logarithmic forms and Baker-Davenport reduction. See~\cite{LRS} for the details.

\section{$p$-adic analytic functions}
\label{spadic}
In this section we recall some very basic facts about $p$-adic analytic functions. Most of them are quite standard. All missing proofs, unless indicated otherwise, can be found in any standard text like~\cite{Go20}.

Let~$p$ be a prime number and let~${\mathbb K}$ be a finite extension of~$\Q_p$. We extend the standard $p$-adic absolute value ${|\cdot|}$ from~$\Q_p$ to~${\mathbb K}$, so that ${|p|_p=p^{-1}}$. We will also use the additive valuation $\nu_p$ defined by ${\nu_p(z)= - \log|z|_p/\log p}$ for ${z\in K^\times}$, with the convention ${\nu_p(0)=+\infty}$.

For ${a\in {\mathbb K}}$ and ${r>0}$ we denote $\calD(a,r)$ and $\overline\calD(a,r)$ the open and the closed disk with center~$a$ and radius~$r$:
$$
\calD(a,r)=\{z \in K: |z-a|_p<r\}, \qquad \overline \calD(a,r)=\{z \in K: |z-a|_p\le r\}.
$$ 
We denote by $\calO_{\mathbb K}$, or simply by~$\calO$ if this does not lead to a confusion, the ring of integers of~${\mathbb K}$:
$$
\calO=\{z\in {\mathbb K}: |z|_p \le 1\} = \overline\calD(0,1).  
$$
We call ${f:\calO\to\calO}$ an analytic function if there is a sequence 
${\alpha_0, \alpha_1, \alpha_2, \ldots \in \calO}$  with ${\lim_{n\to\infty}|\alpha_n|_p=0}$ such that 
$$
f(z) =\sum_{n=0}^\infty \alpha_n z^n \qquad (z\in \calO). 
$$
Note that for any ${b\in \calO}$ we have  
\begin{equation}
\label{equb}
f(z) = \sum_{k=0}^\infty \beta_k(z-b)^k, 
\end{equation} 
where 
$$ 
\beta_k=\frac{f^{(k)}(b)}{k!}= \sum_{n=k}^\infty \binom nk \alpha_n b^{n-k}.
$$

\subsection{$p$-adic order of values of an analytic function}
We start from the following trivial, but useful observation.

\begin{proposition}
\label{ptrivial}
Let $f(z)$ be an analytic function. Then for any ${a,b\in \calO}$ we have 
${|f(a)-f(b)|_p \le |a-b|_p}$. 
\end{proposition}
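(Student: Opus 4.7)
The plan is to expand $f(a)-f(b)$ as a convergent $p$-adic series in which $a-b$ factors out, and then bound the remaining factor by~$1$ using the ultrametric inequality.

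First I would write $f(z)=\sum_{n=0}^\infty \alpha_n z^n$ with $\alpha_n\in\calO$ and $|\alpha_n|_p\to 0$, as in the definition. Then termwise subtraction gives
\[
f(a)-f(b)=\sum_{n=1}^\infty \alpha_n(a^n-b^n)=(a-b)\sum_{n=1}^\infty \alpha_n\bigl(a^{n-1}+a^{n-2}b+\cdots+b^{n-1}\bigr).
\]
Since $a,b\in\calO$, each monomial $a^ib^{n-1-i}$ satisfies $|a^ib^{n-1-i}|_p\le 1$, and by the ultrametric inequality the inner sum has $p$-adic absolute value at most~$1$. Combined with $|\alpha_n|_p\le 1$ and $|\alpha_n|_p\to 0$, the outer series converges in $\calO$, and its value has $p$-adic absolute value at most~$1$. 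The desired inequality $|f(a)-f(b)|_p\le|a-b|_p$ follows.

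Alternatively, one could invoke the expansion~\eqref{equb} of~$f$ around~$b$: from the explicit formula $\beta_k=\sum_{n\ge k}\binom{n}{k}\alpha_n b^{n-k}$ one checks that $\beta_k\in\calO$ and $|\beta_k|_p\to 0$, so $f(a)-f(b)=(a-b)\sum_{k\ge 1}\beta_k(a-b)^{k-1}$ with the second factor in~$\calO$. Either route is completely routine; there is no real obstacle, since the whole point of restricting the coefficients to~$\calO$ and the variable to~$\calO$ is precisely to make this contraction property automatic from the ultrametric inequality.
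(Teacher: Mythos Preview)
Your proposal is correct, and in fact your second route is exactly the paper's proof: expand~$f$ around~$b$ via~\eqref{equb}, note $\beta_0=f(b)$ and $\beta_k\in\calO$, and factor out $a-b$. Your first route (directly factoring $a^n-b^n$) is a harmless variant that avoids passing through the re-expansion coefficients~$\beta_k$; both are equally short and rely only on the ultrametric inequality plus the fact that all data lie in~$\calO$.
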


\begin{proof}
Substituting ${z=a}$ into~\eqref{equb} and noting that ${\beta_0=f(b)}$,  we obtain 
$$
|f(a)-f(b)|_p= |b-a|_p \left|\sum_{k=1}^\infty\beta_k|a-b|^{k-1}\right|_p. 
$$
All terms in the sum on the right belong to~$\calO$, whence the result. 
\end{proof}

Assume now that~$f$ is not identically~$0$. Then the set of zeros of~$f$ is finite, because it is a discrete subset of the compact set~$\calO$; we denote this set~$\calA$. 

\begin{theorem}
\label{thfunml}
Let~$e$ be the ramification index of $\K/\Q_p$. 
Then there exists a positive integer~$k$ such that  for every  ${i\in \{0,1, \ldots, p^k-1\}}$ we have one of the following two options.
\begin{enumerate}
\item[(C)]
There exists ${\kappa_i\in e^{-1}\Z}$ such that for ${z\in \calO}$  satisfying ${z\equiv i \pmod {p^k}}$ we have ${\nu_p(f(z)) =\kappa_i}$; in other words, $\nu_p(f(z))$ is constant on the residue class ${z\equiv i\pmod  {p^k}}$. 

\item[(L)]
There exist 
$$
a_i\in \calA, \qquad \kappa_i\in e^{-1}\Z, \qquad  \mu_i \in \Z_{>0}
$$ 
such that for   ${z\in \calO}$  satisfying ${z\equiv i \pmod {p^k}}$ we have 
$$
\nu_p(f(z)) =\kappa_i+\mu_i\nu_p(z-a_i). 
$$
\end{enumerate}

\end{theorem}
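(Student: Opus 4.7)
The plan is to localize the behaviour of $f$ near each zero $a\in\calA$ and use compactness of $\calO$ elsewhere.  For each $a\in\calA$, I would expand $f$ at $a$ via~\eqref{equb} and write $f(z)=(z-a)^{m_a}g_a(z)$, where $m_a\ge 1$ is the order of vanishing of $f$ at $a$ and $g_a$ is analytic on $\calO$ with coefficients in $\calO$ and $g_a(a)\ne 0$.  Applying Proposition~\ref{ptrivial} to $g_a$ yields $|g_a(z)-g_a(a)|_p\le|z-a|_p$, so the ultrametric inequality forces $|g_a(z)|_p=|g_a(a)|_p$ on any disk $D_a:=\overline\calD(a,r_a)$ with $r_a<|g_a(a)|_p$; after further shrinking the $r_a$, one may also assume the $D_a$, $a\in\calA$, are pairwise disjoint.

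Since $\calA$ exhausts the zeros of $f$ in $\calO$, the function $f$ is non-vanishing on the compact set $\calO\setminus\bigcup_{a\in\calA}D_a$, and hence $|f(z)|_p\ge p^{-M}$ there for some integer $M$.  I would then pick $k$ large enough that $p^{-k}\le\min_a r_a$ and $p^{-k}<p^{-M}$.  For each $i\in\{0,\ldots,p^k-1\}$, the residue class $C_i:=i+p^k\calO$ is a $p$-adic disk of radius $p^{-k}$, so the ``nested or disjoint'' property of ultrametric disks implies that either $C_i\subseteq D_a$ for some (necessarily unique) $a\in\calA$, or $C_i\cap D_a=\emptyset$ for every $a\in\calA$.

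In the first case, $\nu_p(f(z))=m_a\nu_p(z-a)+\nu_p(g_a(a))$ on $C_i$.  If $a\in C_i$ (equivalently $|a-i|_p\le p^{-k}$), then $\nu_p(z-a)$ varies unboundedly and we obtain option~(L) with $a_i=a$, $\mu_i=m_a$, $\kappa_i=\nu_p(g_a(a))$; otherwise $\nu_p(z-a)=\nu_p(i-a)$ is constant by the ultrametric and option~(C) holds.  In the second case, Proposition~\ref{ptrivial} together with the lower bound gives $|f(z)-f(z')|_p\le p^{-k}<p^{-M}\le |f(z)|_p$ for all $z,z'\in C_i$, which forces $|f|_p$ to be constant on $C_i$ and option~(C) to hold.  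The main technical point to justify is that the coefficients of the local expansion of $f$ at $a$ really lie in $\calO$ and that $g_a$ is analytic on all of $\calO$ (rather than only near $a$); both follow from~\eqref{equb}, the integrality of binomial coefficients, and the convergence $|\alpha_n|_p\to 0$, so I do not anticipate any serious obstacle beyond this setup.
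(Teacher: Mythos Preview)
Your argument is correct and follows the same overall architecture as the paper's proof: factor out $(z-a)^{\mu}$ at each zero of~$f$, and use compactness of~$\calO$ to control the zero-free part.  The execution differs in two places, and it is worth noting what each choice buys.

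First, where the paper reparametrises via ${f_j(z)=f(j+p^mz)}$ to reduce to the case of at most one zero (and then to no zeros), you instead keep all the zeros in play simultaneously and separate them by choosing pairwise disjoint disks~$D_a$.  This is equivalent but slightly more constructive: your~$k$ is explicit in terms of ${\min_a r_a}$ and the lower bound~$p^{-M}$, whereas the paper obtains~$k$ only existentially.

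Second, for the zero-free region the paper argues topologically: $|f|_p$ is bounded below, so $f(z)$ lands in finitely many of the open sets ${\pi^j\calO^\times}$, whose preimages are open and hence are finite unions of residue classes.  You instead reuse Proposition~\ref{ptrivial} directly, via the inequality ${|f(z)-f(z')|_p\le p^{-k}<p^{-M}\le |f(z)|_p}$, which forces $|f|_p$ to be constant on each~$C_i$.  Your route is more hands-on and avoids the extra topological step; the paper's route is shorter once one is willing to invoke that openness argument.

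The point you flag at the end --- that the expansion~\eqref{equb} at~$a$ has coefficients in~$\calO$ tending to~$0$, so that $g_a$ is genuinely analytic on all of~$\calO$ and Proposition~\ref{ptrivial} applies to it --- is exactly what needs checking, and your justification (integrality of binomial coefficients together with ${|\alpha_n|_p\to 0}$) is the right one.
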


\begin{proof}
Let~$m$ be a positive integer, and for every ${j\in \{0,1, \ldots, p^m-1\}}$ define 
${f_j(z)=f(j+p^mz)}$. Clearly, if the statement holds true for every~$f_j$ then it holds for~$f$ as well. Taking~$m$ so large that every residue class ${z\equiv j\pmod{p^m}}$ contains at most one element from~$\calA$, we reduce the theorem to the case when~$f$ has at most one zero. If~$f$ does have a zero, say~$a$, then, expanding 
$$
f(z) = \alpha_\mu(z-a)^\mu+\alpha_{\mu+1}(z-a)^{\mu+1}+\cdots,
$$
with ${\mu\ge 1}$ and ${\alpha_\mu\ne 0}$, we note that the statement holds for~$f$ as soon as it holds for the analytic function ${\alpha_\mu+\alpha_{\mu+1}(z-a)+\cdots}$, which has no zero at all.

Thus, it suffices to consider the case ${\calA=\varnothing}$. We need to show that  the $p$-adic order $\nu_p(f(z))$ is constant on every residue class modulo a suitable power of~$p$.  

Since~$f$ does not vanish on~$\calO$, then, by compactness, ${|f(z)|_p}$ must be bounded from below by some strictly positive number. It follows that $f(z)$ belongs to one of the finitely many sets 
$$
\calO^\times, \pi\calO^\times, \ldots, \pi^n\calO^\times, 
$$
where~$\pi$ is a primitive element of~$\K$ and~$n$ is some positive integer. Note that ${\nu_p(\pi)=e^{-1}}$.  

Since these sets are open, their inverse images by~$f$ are open as well. Hence each of these inverse images is a union of finitely many residue classes modulo some power of~$p$. This completes the proof. 
\end{proof}

\subsection{Vanishing of power series}
\label{sshens}
In this subsection we recall two fundamental results about vanishing of a power series on~$\calO$: Hensel's Lemma and Strassman's Theorem.

Hensel's Lemma is the principal technical tool of $p$-adic
analysis. It is usually stated for polynomials, but in this article we
need a slightly more general version, for power series.

\begin{proposition}[Hensel's Lemma for power series]
\label{phens}
Let ${b_0\in \calO}$ be such that ${|f(b_0)|_p<1}$ and ${|f'(b_0)|_p=1}$. Then there exists a unique ${b\in \calO}$ such that ${f(b)=0}$  and ${|b-b_0|_p<|f(b_0)|_p}$. 
\end{proposition}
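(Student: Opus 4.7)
The plan is to run the classical Newton iteration
$b_{n+1}=b_n-f(b_n)/f'(b_n)$ and show it converges quadratically to the desired root, then settle uniqueness by the usual ultrametric trick.

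First I would expand $f$ around an arbitrary $c\in\calO$ using the formula~\eqref{equb}: writing
$f(z)=\sum_{k\ge 0}\beta_k(c)(z-c)^k$
where $\beta_k(c)=\sum_{n\ge k}\binom{n}{k}\alpha_nc^{n-k}$, the hypotheses $\alpha_n\in\calO$, $c\in\calO$ and $\binom{n}{k}\in\Z$ give $\beta_k(c)\in\calO$, with $\beta_0(c)=f(c)$ and $\beta_1(c)=f'(c)$. This is the key input that makes the power-series case no harder than the polynomial case: around every point in $\calO$ we still have a convergent Taylor series with coefficients in $\calO$.

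Next, setting $\varepsilon_n:=|f(b_n)|_p$, I would prove by induction on $n$ that $b_n\in\calO$, $|f'(b_n)|_p=1$, and $\varepsilon_{n+1}\le\varepsilon_n^2$. Granted the induction up to step $n$, the quantity $h_n:=-f(b_n)/f'(b_n)$ lies in $\calO$ with $|h_n|_p=\varepsilon_n<1$, so $b_{n+1}=b_n+h_n\in\calO$. Substituting into the Taylor expansion around $b_n$ the first two terms cancel, giving
$f(b_{n+1})=\sum_{k\ge 2}\beta_k(b_n)h_n^k,$
whose $p$-adic norm is at most $|h_n|_p^2=\varepsilon_n^2$. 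Expanding $f'(z)$ around $b_n$ in the same way (its coefficients still lie in $\calO$, since they are sums with integer multipliers of the $\alpha_n$), one gets $|f'(b_{n+1})-f'(b_n)|_p\le|h_n|_p<1$, so $|f'(b_{n+1})|_p=1$ by the ultrametric inequality.

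The estimate $\varepsilon_n\le\varepsilon_0^{2^n}$ and $|b_{n+1}-b_n|_p=\varepsilon_n$ show that $(b_n)$ is Cauchy, hence converges to some $b\in\calO$; Proposition~\ref{ptrivial} gives $f(b)=\lim f(b_n)=0$, and the ultrametric triangle inequality yields $|b-b_0|_p\le\varepsilon_0=|f(b_0)|_p$. For uniqueness, suppose $b'\in\calO$ is another root with $|b'-b_0|_p\le|f(b_0)|_p<1$, so $|b'-b|_p<1$; expanding around $b$ and using that $|f'(b)|_p=1$ (by the same Taylor argument applied to $f'$), the identity
$0=f(b')=f'(b)(b'-b)+\sum_{k\ge 2}\beta_k(b)(b'-b)^k$
forces $|b'-b|_p\le|b'-b|_p^2$, which is impossible unless $b'=b$.

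The only genuine point that needs care is the verification that every Taylor recentring of $f$ (and of $f'$) preserves integrality of the coefficients; once that is in place, the rest is the standard quadratic Newton estimate together with the ultrametric inequality, so there is no real obstacle beyond routine bookkeeping.
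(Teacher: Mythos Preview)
Your Newton-iteration argument is correct and is the standard proof; the paper itself does not prove this proposition at all but simply refers the reader to textbooks (Conrad and Schikhof), so there is no in-paper proof to compare against. One small remark: your iteration actually yields $|b-b_0|_p\le|f(b_0)|_p$ (indeed equality when $f(b_0)\ne0$), not the strict inequality printed in the statement; this appears to be a slip in the proposition as stated, and in any case the paper only ever uses the mere existence of the root $b$, so nothing downstream is affected.
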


The proof can be found, for instance, in~\cite{Co}, see Theorems~8.2 and~9.4 therein, or in~\cite{Sc06}, see Theorem~{27.6} therein.

The number of zeros can be estimated using Strassman's Theorem. 

\begin{theorem}
[Strassman]
\label{thstra}
Assume that $f(z)$ does not vanish identically on $\calO$; equivalently, the coefficients $\alpha_0, \alpha_1, \ldots$ are not all~$0$. Define~$\mu$ as the largest~$m$ with the property 
$$
|\alpha_m|_p=\max\{|\alpha_n|_p: n=0,1,\ldots\}. 
$$
(Since ${|\alpha_n|_p \to 0}$, such~$\mu$ must exist.) 
Then $f(z)$ has at most~$\mu$ zeros on $\calO$. 
\end{theorem}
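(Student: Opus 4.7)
The plan is to prove Strassman's Theorem by induction on the index~$\mu$. Throughout, write $M = \max_n |\alpha_n|_p$, so $|\alpha_\mu|_p = M$ and $|\alpha_n|_p < M$ for all $n > \mu$.

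For the base case $\mu = 0$, for any ${z \in \calO}$ and any ${n \ge 1}$ we have ${|\alpha_n z^n|_p \le |\alpha_n|_p < |\alpha_0|_p = M}$, so the ultrametric inequality gives ${|f(z)|_p = |\alpha_0|_p = M \ne 0}$. Hence $f$ has no zeros and the bound $\mu = 0$ holds trivially.

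For the inductive step, assume the statement for ${\mu - 1}$. If $f$ has no zero in $\calO$ there is nothing to prove. Otherwise, fix some zero ${a \in \calO}$. The key technical point, and the main thing one must check, is that $f$ admits a factorization ${f(z) = (z-a)g(z)}$ where $g$ is again a power series ${g(z) = \sum_{n \ge 0} \gamma_n z^n}$ with coefficients in $\calO$ and ${|\gamma_n|_p \to 0}$. Formally comparing coefficients forces the relation ${\alpha_n = \gamma_{n-1} - a\gamma_n}$ (with ${\gamma_{-1}=0}$); when ${a \ne 0}$ one solves this and, using ${f(a)=0}$ to convert the finite partial sum into a tail, obtains the convergent expression
$$
\gamma_n = \sum_{k \ge n+1} \alpha_k\, a^{k-n-1},
$$
which manifestly lies in $\calO$ and tends to $0$ in absolute value. (When ${a=0}$ one simply has ${\gamma_n = \alpha_{n+1}}$, and the argument is even simpler.)

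Given this factorization, the remaining task is to compute the Strassman index of $g$. For ${n \ge \mu}$ every term ${\alpha_k a^{k-n-1}}$ with ${k > n \ge \mu}$ satisfies ${|\alpha_k|_p < M}$, so ${|\gamma_n|_p < M}$. For ${n = \mu - 1}$ the tail contains the single dominant term ${\alpha_\mu}$ of absolute value $M$, and all other terms have strictly smaller absolute value, giving ${|\gamma_{\mu-1}|_p = M}$ by the ultrametric equality. For ${n < \mu - 1}$ one has only ${|\gamma_n|_p \le M}$. Consequently the largest index where $|\gamma_n|_p$ attains its maximum is exactly ${\mu - 1}$. By the inductive hypothesis $g$ has at most ${\mu - 1}$ zeros in $\calO$, and since every zero of $f$ is either $a$ or a zero of $g$, $f$ has at most $\mu$ zeros in $\calO$, completing the induction. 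The main obstacle is purely the algebraic manipulation that produces the factorization with integral, nullsequential coefficients; once this is in hand, tracking the Strassman index is a direct computation with the ultrametric inequality.
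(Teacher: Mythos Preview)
Your proof is correct and is in fact the standard inductive argument for Strassman's Theorem. The paper itself does not supply a proof of this result; it simply cites a reference (Cassels, \emph{Local Fields}, Theorem~4.1). The argument you give is essentially the one found there, so there is nothing to contrast.
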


The proof can be found in many sources; see, for instance, \cite[Theorem~4.1]{Ca86}.

\subsection{Functions $\exp$ and $\log$ in the $p$-adic domain}
\label{ssexplog}
We denote ${\rho=p^{-1/(p-1)}}$. Let us recall the definition and the basic properties of the $p$-adic exponential and logarithmic function. 

\begin{enumerate}
\item
For ${z\in \calD(0,\rho)}$ we define  
$$
\exp(z) =\sum_{n=0}^\infty \frac{z^n}{n!}. 
$$
For ${z,w\in \calD(0,\rho)}$ we have
$$
|\exp(z)-1|_p=|z|_p, \quad \exp(z+w)=\exp(z)\exp(w), \quad \exp'(z)=\exp(z). 
$$
\item
For ${z\in \calD(1,1)}$ we define  
$$
\log(z) =\sum_{n=1}^\infty \frac{(-1)^{n-1}(z-1)^n}{n}. 
$$
For ${z,w\in \calD(1,1)}$ we have
$$
\log(zw)=\log(z)+\log(w), \quad \log'(z)=\frac1z. 
$$
\item
For ${z\in \calD(1,\rho)}$ we have 
$$
|\log(z)|_p=|z-1|_p, \quad \exp(\log(z)) =z. 
$$
\item
For ${z\in \calD(0,\rho)}$  we have 
${\log(\exp(z))=z}$. 
\end{enumerate}

\begin{remark}
\label{rrhoone}
Note that, when ${p>2}$ and~$p$ is unramified in~${\mathbb K}$, we have 
$$
\calD(0,\rho)=\calD(0,1), \qquad \calD(1,\rho)=\calD(1,1), \qquad \overline\calD(0,1)=\calD(0,p^{-1}). 
$$
This will always be the case starting from Section~\ref{sfunction}. This excludes the primes $p\in\{2,11\}$ from our analysis. 
\end{remark}

\section{$p$-adic analytic interpolation of the Tribonacci sequence}
\label{sfunction}

Recall that we denote ${\Lambda=\{\lambda_1,\lambda_2,\lambda_3\}}$ the set of roots of the polynomial 
$$
P(X)=X^3-X^2-X-1.
$$
Let~$p$ be a prime number and let ${{\mathbb K}=\Q_p(\lambda_1,\lambda_2,\lambda_3)}$ be the splitting field of $P(X)$ over~$\Q_p$.  As before, we denote~$\calO$ its ring of integers. The discriminant of $P(X)$ is $-44$. Hence, assuming in the sequel that ${p\ne 2,11}$, the field~${\mathbb K}$ is unramified over~$\Q_p$.

We denote ${d=[{\mathbb K}:\Q_p]}$. There are three possibilities. If all the roots of $P(X)$ are in~$\Q_p$ then ${{\mathbb K}=\Q_p}$ and ${d=1}$. If $P(X)$ has exactly one root in~$\Q_p$ then ${d=2}$. Finally, if $P(X)$ is irreducible in~$\Q_p$ then ${d=3}$.  

Recall that 
$$
T(n) = \sum_{\lambda\in \Lambda}c_\lambda\lambda^n, \qquad c_\lambda=\lambda P'(\lambda)^{-1}
$$
Note that, since ${p\ne 2,11}$, we have ${c_\lambda  \in \calO^\times}$ for ${\lambda \in \Lambda}$. 
Recall also that  ${T(n)=0}$ if and only if ${n\in {\mathcal Z}_T}$. 
Note that ${\Lambda \subset \calO^\times}$. Let ${N=N_p}$ be the order of the  subgroup of the multiplicative group $(\calO/p)^\times$ generated by~$\Lambda$. In \cite{ML14} this quantity is denoted $\pi(p)$. Note that 
${N\mid p^d-1}$. When $d=3$, we have the more precise divisibility relation $N\mid p^2+p+1$. 

For ${\ell\in\{0,1,\ldots, N-1\}}$ we consider the analytic function ${f_\ell:\Z_p\to\Z_p}$ defined by 
\begin{equation}
\label{efell}
f_\ell(z) = \sum_{\lambda\in \Lambda} c_\lambda\lambda^\ell\exp\bigl(z\log (\lambda^N)\bigr).
\end{equation}
Note that by the definition of~$N$ we have 
$$
\lambda^N\in \calD(1,1)=\calD(1,\rho)
$$
(see Remark~\ref{rrhoone}), so $f_\ell(z)$ is indeed well-defined for ${z\in \Z_p}$. Furthermore,  for ${m\in \Z}$ we have 
\begin{equation}
\label{efkt}
f_\ell(m) =T(\ell+mN) \in \Z.
\end{equation}
Since~$\Z$ is dense in~$\Z_p$ and~$f_\ell$ is continuous, we indeed have ${f_\ell(z)\in \Z_p}$ for ${z\in \Z_p}$. 

Note also that ${f_\ell(z)}$ does not vanish identically on~$\Z_p$: this also follows from equation~\eqref{efkt}.

\section{Proof of Theorem~\ref{ththird}} 
\label{sthfive}
We use the terminology and the notation of Section~\ref{sfunction}. In this section~$p$ is a prime number, distinct from~$2$ and~$11$, and satisfying the following two conditions: ${p\equiv 2 \pmod 3}$ and ${\Lambda \subset \Q_p}$. The last condition means that ${\K=\Q_p}$ and ${d=1}$. By the Chebotarev Density Theorem, the set of such~$p$ is infinite and, moreover, it is of density $1/12$ in the set of all primes.

We are going to show that for every such~$p$ both statements of Theorem~\ref{ththird} hold true. Actually, we will prove only the former statement: 
\begin{equation}
\label{eonethird}
n\equiv 1/3 \pmod {p-1} \Longrightarrow \nu_p(T(n)) \ge \nu_p(n-1/3), 
\end{equation}
because the second statement, with $1/3$ replaced by $-5/3$, is proved absolutely similarly.

To start with, let us make the following observation: since  ${p\equiv 2 \pmod 3}$, every element of $\Z_p^\times$ has a single cubic root in $\Z_p$. In particular, for every ${\lambda \in \Lambda}$ there is a well-defined cubic root ${\lambda^{1/3}\in \Z_p}$. 

It turns out that these cubic roots are exactly those for which we have~\eqref{etonethird}. 

\begin{proposition}
\label{ponethirdvanish}
With our choice of the cubic roots $\lambda^{1/3}$ we have 
$$
\sum_{\lambda\in \Lambda}c_\lambda\lambda^{1/3}=0.
$$
\end{proposition}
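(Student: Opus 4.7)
The plan is to reduce to the earlier proposition by observing that the $\Z_p$-cubic roots chosen automatically satisfy the normalization used there, and then isolate the correct factor of $F$ using Galois theory over $\Q_p(\zeta)$.

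Since $P(X)$ has constant term $-1$, one has $\lambda_1\lambda_2\lambda_3=1$. The hypothesis $p\equiv 2\pmod 3$ makes cubing a bijection on $\Z_p^\times$, so the unique $\Z_p$-cubic root of $1$ is $1$; hence the chosen cubic roots satisfy $\lambda_1^{1/3}\lambda_2^{1/3}\lambda_3^{1/3}=1$. Setting $\alpha_i=c_{\lambda_i}\lambda_i^{1/3}\in\Q_p$, the algebraic computation already carried out in the previous proposition yields $F(\alpha_1,\alpha_2,\alpha_3)=0$, where $F=X_1^3+X_2^3+X_3^3-3X_1X_2X_3$. Factoring $F$ over $\Q_p(\zeta)$ (with $\zeta$ a primitive cubic root of unity, which does not lie in $\Q_p$ precisely because $p\equiv 2\pmod 3$), one of the three factors
\[
\alpha_1+\alpha_2+\alpha_3,\qquad \alpha_1+\zeta\alpha_2+\overline\zeta\alpha_3,\qquad \alpha_1+\overline\zeta\alpha_2+\zeta\alpha_3
\]
must vanish in $\Q_p(\zeta)$; the task is to show that it is the first one.

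If the second factor vanishes, then applying the nontrivial element of $\Gal(\Q_p(\zeta)/\Q_p)$ (which fixes each $\alpha_i\in\Q_p$ and swaps $\zeta$ with $\overline\zeta$) forces the third factor to vanish as well. Adding and subtracting these two relations, and using $\zeta+\overline\zeta=-1$ together with $\zeta\neq\overline\zeta$, one obtains $\alpha_1=\alpha_2=\alpha_3=:\alpha$.

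The main obstacle is to rule out this last scenario. The equality would give $\alpha^3=\alpha_1\alpha_2\alpha_3=\prod_i c_{\lambda_i}=1/44$, using $\prod_i\lambda_i^{1/3}=1$ together with $\prod_i P'(\lambda_i)=44$ (which follows from the discriminant value $-44$ stated in the paper). Since moreover $\alpha_i^3=\lambda_i^4/P'(\lambda_i)^3$, the polynomial $H(X):=44X^4-P'(X)^3$ would have to vanish at each $\lambda_i$, hence $P(X)$ would have to divide $H(X)$ in $\Q_p[X]$. A short reduction using $X^3\equiv X^2+X+1$ and $X^4\equiv 2X^2+2X+1\pmod{P(X)}$ gives
\[
H(X)\equiv 8(3X+1)^2 \pmod{P(X)},
\]
which is nonzero modulo $P$ because $-1/3$ is not a root of $P$ (indeed $P(-1/3)=-22/27$). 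This contradiction confirms that the first factor vanishes, i.e.\ $\sum_{\lambda\in\Lambda}c_\lambda\lambda^{1/3}=0$.
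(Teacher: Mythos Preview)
Your proof is correct and follows essentially the same strategy as the paper's: both exploit that the $\alpha_i=c_{\lambda_i}\lambda_i^{1/3}$ lie in $\Q_p$ while $\zeta\notin\Q_p$, and use the Galois involution of $\Q_p(\zeta)/\Q_p$ to force the untwisted factor of $F$ to be the one that vanishes.

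The one substantive difference is in the final step. The paper, in its case~\eqref{ebarzeta}, derives $(\zeta-\overline\zeta)(c_{\lambda_2}\lambda_2^{1/3}-c_{\lambda_3}\lambda_3^{1/3})=0$ and declares this ``again a contradiction'' without saying why $c_{\lambda_2}\lambda_2^{1/3}\neq c_{\lambda_3}\lambda_3^{1/3}$. You instead push the Galois argument to the conclusion $\alpha_1=\alpha_2=\alpha_3$ and then rule this out by an explicit polynomial computation: from $\alpha_i^3=\lambda_i^4/P'(\lambda_i)^3=1/44$ you deduce $P\mid 44X^4-P'(X)^3$, and the reduction $44X^4-P'(X)^3\equiv 8(3X+1)^2\pmod{P(X)}$ gives the contradiction. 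This is a genuine (if minor) improvement: it supplies the missing justification that the paper's proof omits.
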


\begin{proof}
Assuming the contrary, we must have one of the options 
\begin{align}
\label{ezeta}
c_{\lambda_1}\lambda_1^{1/3}+c_{\lambda_2}\lambda_2^{1/3}+\zeta c_{\lambda_3}\lambda_3^{1/3}&=0,\\
\label{ezetazeta}
c_{\lambda_1}\lambda_1^{1/3}+\zeta c_{\lambda_2}\lambda_2^{1/3}+\zeta c_{\lambda_3}\lambda_3^{1/3}&=0,\\
\label{ebarzeta}
c_{\lambda_1}\lambda_1^{1/3}+\zeta c_{\lambda_2}\lambda_2^{1/3}+\overline\zeta c_{\lambda_3}\lambda_3^{1/3}&=0,  
\end{align}
where~$\zeta$ and~$\overline\zeta$ are the primitive cubic roots of unity. Option~\eqref{ezetazeta} reduces to~\eqref{ezeta}, so we only need to rule out~\eqref{ezeta} and~\eqref{ebarzeta}.

Note  that ${\zeta\notin \Q_p}$ because ${p\equiv 2\bmod 3}$. Therefore, by Galois conjugation,  in the case~\eqref{ezeta} we also have 
${c_{\lambda_1}\lambda_1^{1/3}+c_{\lambda_2}\lambda_2^{1/3}+\overline\zeta c_{\lambda_3}\lambda_3^{1/3}=0}$. 
Hence ${(\zeta-\overline\zeta)c_{\lambda_3}\lambda_3^{1/3}=0}$, a contradiction.

Similarly, in the case~\eqref{ebarzeta} we also have 
${c_{\lambda_1}\lambda_1^{1/3}+\overline\zeta c_{\lambda_2}\lambda_2^{1/3}+ \zeta c_{\lambda_3}\lambda_3^{1/3}=0}$. 
It follows that ${(\zeta-\overline\zeta)(c_{\lambda_2}\lambda_2^{1/3} -c_{\lambda_3}\lambda_3^{1/3})=0}$, again a contradiction.  
\end{proof}

Now we are in a position to prove~\eqref{eonethird}. We define ${N=N_p}$ as in Section~\ref{sfunction}; note that in our special case ${d=1}$ and so ${N\mid p-1}$. In particular, the residue class ${1/3\pmod N}$ is well-defined.

Let ${n\in \Z }$ and ${\ell\in \{0,\ldots, N-1\}}$ satisfy 
$$
n\equiv \ell\equiv 1/3\pmod N.
$$
We define $f_\ell(z)$ as in~\eqref{efell}.  Write ${n=\ell+Nm}$ and ${1/3=\ell+Nb}$ with ${m\in \Z}$ and ${b\in \Z_p\cap\Q}$. We have clearly 
${T(n) = f_\ell(m)}$. We claim that ${f_\ell(b)=0}$. Indeed, 
for ${\lambda\in \Lambda}$ we have 
$$
\bigl(\lambda^\ell\exp(b\log(\lambda^N)\bigr)^3 =\lambda^{3(\ell+Nb)}=\lambda.  
$$
Since ${\lambda^\ell\exp(b\log(\lambda^N)\in \Z_p}$, it must be equal to the cubic root $\lambda^{1/3}$ specified above. Hence 
\begin{equation}
\label{eflb}
f_\ell(b) = \sum_{\lambda\in \Lambda} c_\lambda\lambda^\ell\exp\bigl(b\log (\lambda^N)\bigr)=\sum_{\lambda\in \Lambda} c_\lambda\lambda^{1/3} =0. 
\end{equation}
by Proposition~\ref{ponethirdvanish}. 

Now we are done: Proposition~\ref{ptrivial} implies that 
$$
\nu_p(T(n)) = \nu_p(f_\ell(m) -f_\ell(b)) \ge \nu_p(m-b)= \nu_p(n-1/3),
$$
as wanted. Note that $p \equiv 2 \mod 3$ was only required to ensure that $3 \nmid N$. The argument above can be generalized for all $p$ such that $3 \nmid N$ and $\Lambda \subset \Q_p$.

\section{Analytic form of Conjectures~\ref{conml} and \ref{conmlrat}}
\label{sstatements}

In this section~$p$ is a prime number distinct from $2,3,11$.  We continue using the notation of Section~\ref{sfunction}. 

We are going to show that Conjectures~\ref{conml} and~\ref{conmlrat} have very natural interpretations in terms of the zeros of the functions $f_\ell(z)$.

\begin{theorem}
\label{thstatements}
\begin{enumerate}
\item
The following three statements are equivalent. 

\begin{enumerate}[ref=\textbf{(\alph*)}]
\item 
\label{imlholds}
Conjecture~\ref{conml} holds for the given~$p$. 

\item
\label{izerosn}
For every  ${\ell\in \{0, \ldots, N-1\}}$, the zeros of the function $f_\ell(z)$ belong to $N^{-1}\Z$. 

\item
\label{izt}
For every~$\ell$ the following holds: if ${b\in \Z_p}$ is a zero of $f_\ell(z)$ then ${\ell+Nb\in \calZ_T}$. 

\setcounter{jump}{\value{enumii}}
\end{enumerate}

\item
The following three statements are equivalent. 

\begin{enumerate}[ref=\textbf{(\alph*)}]
\setcounter{enumii}{\value{jump}}

\item 
\label{imlratholds}
Conjecture~\ref{conmlrat} holds for the given~$p$. 

\item
\label{izerosq}
For every ${\ell\in \{0, \ldots, N-1\}}$, the zeros of the function $f_\ell(z)$ belong to ${\Q\cap\Z_p}$. 

\item
\label{iqt}
For every~$\ell$ the following holds: if ${b\in \Z_p}$ is a zero of $f_\ell(z)$ then ${\ell+Nb\in \calQ_T}$. 
\end{enumerate}
\end{enumerate}
\end{theorem}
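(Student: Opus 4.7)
I treat both parts together, since they share the same structure. The core translation is that, for $N \mid Q$ and $\ell \in \{0,\ldots,N-1\}$, the residue classes $n \equiv \ell + Ni' \pmod{Q}$ and $m \equiv i' \pmod{Q/N}$ are in bijection via $n = \ell + Nm$, with $T(n) = f_\ell(m)$ whenever $m \in \mathbb{Z}$. Because $\gcd(N,p) = 1$, the identity $n - a = N(m - (a-\ell)/N)$ transfers valuations: $\nu_p(n-a) = \nu_p(m - (a-\ell)/N)$ for any $a \in \mathbb{Z}_{(p)}$, which lets the structure statements for $\nu_p(T(n))$ (Conjectures~\ref{conml} and~\ref{conmlrat}) and for $\nu_p(f_\ell(m))$ (Theorem~\ref{thfunml}) speak to each other directly.

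\textbf{Equivalence of (b) and (c).} The implications (c) $\Rightarrow$ (b) are immediate from $\mathcal{Z}_T \subset \mathbb{Z}$ and $\mathcal{Q}_T \subset \mathbb{Q}$. For (b) $\Rightarrow$ (c) in part~1, given a zero $b = s/N \in N^{-1}\mathbb{Z} \cap \mathbb{Z}_p$ of $f_\ell$, I would observe that $u_\lambda := \exp((1/N)\log \lambda^N)$ is an $N$-th root of $\lambda^N$, so $u_\lambda = \zeta_\lambda \lambda$ for some $N$-th root of unity $\zeta_\lambda \in \mathbb{K}$. Then $f_\ell(b) = \sum_\lambda \zeta_\lambda^s c_\lambda \lambda^{\ell+s} = 0$ is a twisted integral vanishing, and Theorem~\ref{thtwint} forces $\ell + s = \ell + Nb \in \mathcal{Z}_T$. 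For part~2, writing $b = s/t \in \mathbb{Q} \cap \mathbb{Z}_p$ in lowest terms, the element $\lambda^\ell \exp(b \log \lambda^N)$ satisfies $(\lambda^\ell \exp(b\log\lambda^N))^t = \lambda^{t(\ell + Nb)}$ and so constitutes a valid definition of $\lambda^{\ell + Nb}$; thus $f_\ell(b) = 0$ directly exhibits $\ell + Nb$ as a rational (hence twisted rational) zero of $T$, placing it in $\mathcal{Q}_T$.

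\textbf{Implication (b) $\Rightarrow$ (a).} Applying Theorem~\ref{thfunml} to each $f_\ell$ yields a common $k$ such that on every residue class $m \equiv i' \pmod{p^k}$ the valuation $\nu_p(f_\ell(m))$ is either constant or of the form $\kappa + \mu\, \nu_p(m - b)$ with $b$ a zero of $f_\ell$. Setting $Q := Np^k$ and transferring via the dictionary above, I recover the structure of Conjecture~\ref{conml} with $a_i := \ell + Nb$; assumption (b) gives $b \in N^{-1}\mathbb{Z}$, hence $a_i \in \mathbb{Z}$. The required $\nu_p(a_i - i) \geq \nu_p(Q) = k$ reduces to $\nu_p(b - i') \geq k$, which is automatic since $b$ lies in the residue class of $i'$ modulo $p^k$. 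The argument for Conjecture~\ref{conmlrat} is identical, using only that $\mathcal{Q}_T \subset \mathbb{Q}$.

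\textbf{Implication (a) $\Rightarrow$ (b): the main obstacle.} Let $b \in \mathbb{Z}_p$ be a zero of $f_\ell$; choose $k$ so large that $b$ is the unique zero of $f_\ell$ in its residue class modulo $p^k$ (possible by discreteness of the zero set in the compact $\mathbb{Z}_p$). Theorem~\ref{thfunml} then gives $\nu_p(f_\ell(m)) = \kappa + \mu \nu_p(m-b)$ on that class, which translates to $\nu_p(T(n)) = \kappa + \mu \nu_p(n-a)$ with $a = \ell + Nb$ on the corresponding class of $n$ modulo $Np^k$. Meanwhile Conjecture~\ref{conml} (respectively Conjecture~\ref{conmlrat}) supplies some $a_i \in \mathbb{Z}$ (respectively $a_i \in \mathbb{Q}$) with $\nu_p(T(n)) = \kappa_i + \mu_i \nu_p(n - a_i)$ on the same class, after refining both moduli to their least common multiple. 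Option~(C) of the conjecture is ruled out because $\nu_p(T(n))$ is unbounded on this class. Picking an integer sequence in the class with $\nu_p(n - a) \to \infty$ (available by the Chinese remainder theorem, since $\gcd(N,p)=1$), the Theorem formula forces $\nu_p(T(n)) \to \infty$, so the conjecture formula forces $\nu_p(n - a_i) \to \infty$ as well; but then $\nu_p(a - a_i) \geq \min(\nu_p(n-a),\nu_p(n-a_i)) \to \infty$, so $a = a_i$. Hence $a = \ell + Nb \in \mathbb{Z}$ (respectively $\mathbb{Q}$), whence $b \in N^{-1}\mathbb{Z}$ (respectively $\mathbb{Q}$), which is~(b). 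The delicate point is disentangling the finite set of exceptional $n$ allowed by the conjecture from the asymptotic behaviour near $b$, which is handled by passing to an infinite subsequence in the class.
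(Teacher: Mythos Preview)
Your proof is correct and follows essentially the same approach as the paper: Theorem~\ref{thfunml} supplies the direction from the zero-location hypotheses to the conjectures, a limiting/pigeonhole argument (the paper's Lemma~\ref{lcholds}) handles the reverse direction, and the link between (b) and (c) goes through the twisted-zero classification (Theorems~\ref{thtwint} and~\ref{thtwrat}). Your treatment of \ref{izerosq}$\Rightarrow$\ref{iqt} is in fact slightly cleaner than the paper's, since by taking $\lambda^\ell\exp(b\log\lambda^N)$ itself as the determination of $\lambda^{\ell+Nb}$ you exhibit a genuine rational zero and avoid the detour through Lemma~\ref{ltwisted} and Theorem~\ref{thtwrat}.
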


This theorem is very useful for producing counter-examples to both
conjectures, see Section~\ref{sex}. More importantly, it provides a
clear motivation why the conjectures can only be expected to hold for
relatively few primes. Indeed, there is absolutely no reason to expect
that every $f_\ell(z)$ would have only zeros in~$\Q$, and it is even
less of a reason to expect that it would not vanish outside a fixed
set of six elements.

Let us start with some lemmas. 

\begin{lemma}
\label{ltwisted}
If ${b\in \Q\cap \Z_p}$ is a zero of $f_\ell(z)$ then ${\ell+Nb}$ is a twisted rational zero of~$T$, as defined in Section~\ref{srat}.  
\end{lemma}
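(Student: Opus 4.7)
The plan is to unwind the hypothesis $f_\ell(b)=0$ by interpreting each summand $\mu_\lambda := \lambda^\ell\exp(b\log\lambda^N)\in\K$ as a $p$-adic realization of some algebraic $t$-th root of $\lambda^{tr}$, where $r := \ell+Nb$ and $t$ is the denominator of $b$ in lowest terms. Once this is established, $\mu_\lambda$ itself may be taken as the definition of the rational power $\lambda^r$, and the relation $\sum_\lambda c_\lambda \mu_\lambda = 0$ will then directly witness $r\in\calQ_T$; in fact the twisting roots of unity can all be chosen to be $1$, so we will obtain the stronger statement that $r$ is a rational (not merely twisted rational) zero of $T$.

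First I would write $b = s/t$ with $\gcd(s,t)=1$ and $t\geq 1$, so that $tr = \ell t + Ns \in \Z$. The central computation is
$$
\mu_\lambda^t = \lambda^{\ell t}\exp\!\bigl(s\log \lambda^N\bigr) = \lambda^{\ell t}(\lambda^N)^s = \lambda^{tr},
$$
where the middle equality uses the identity $\exp(s\log z) = z^s$ valid for $s\in\Z$ and $z\in\calD(1,\rho)$, combined with $\lambda^N \in \calD(1,\rho)$ as recorded in Section~\ref{sfunction}. Hence $\mu_\lambda$ is a root of $X^t - \lambda^{tr} \in \Q(\lambda)[X]\subset\overline\Q[X]$, so $\mu_\lambda$ is algebraic over $\Q$.

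Fixing once and for all an embedding $\iota \colon \overline\Q\hookrightarrow\overline\Q_p$ compatible with the chosen identification of $\lambda_1,\lambda_2,\lambda_3$ inside $\K$, each $\mu_\lambda\in\K$ lifts uniquely to an element $\tilde\mu_\lambda\in\overline\Q$ satisfying $\tilde\mu_\lambda^t = \lambda^{tr}$. Declaring $\lambda^r := \tilde\mu_\lambda$ as our definition of the rational power (with all $\xi_i = 1$), the identity $\sum_\lambda c_\lambda \mu_\lambda = 0$ in $\K$ descends via injectivity of $\iota$ to $\sum_\lambda c_\lambda \lambda^r = 0$ in $\overline\Q$, which is exactly the assertion that $r = \ell + Nb$ is a (twisted) rational zero of $T$. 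The only nontrivial point is the embedding bookkeeping — essentially the observation that a root of a polynomial with coefficients in $\overline\Q$ that lies in $\overline\Q_p$ is automatically in the image of $\iota$ — but this is routine and no other obstacle arises.
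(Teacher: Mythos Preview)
Your argument is correct and is essentially the paper's own proof: both raise $\mu_\lambda := \lambda^\ell\exp\bigl(b\log\lambda^N\bigr)$ to an integer power $m$ with $mb\in\Z$ (you take $m=t$, the exact denominator of~$b$; the paper allows any such~$m$) to obtain $\mu_\lambda^m = \lambda^{m(\ell+Nb)}$, so that $\mu_\lambda$ coincides with a determination of $\lambda^r$ up to a root of unity. The paper records this as $\mu_\lambda = \xi_\lambda\lambda^a$ for a pre-chosen determination~$\lambda^a$, whereas you declare $\lambda^r := \tilde\mu_\lambda$ directly and are more explicit about the embedding $\overline\Q\hookrightarrow\overline\Q_p$ that the paper leaves implicit; the substance is the same.
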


\begin{proof}
Denote ${a=\ell+Nb}$ and for every ${\lambda \in \Lambda}$ choose some determination for~$\lambda^a$. 

Let~$m$ be a non-zero integer such that ${mb\in \Z}$.  Then 
$$
\Bigl(\lambda^\ell\exp\bigl(b\log(\lambda^N)\bigr)\Bigr)^{m}= \lambda^{m\ell}\exp\bigl(mb\log(\lambda^N)\bigr)=\lambda^{ma}. 
$$
Hence ${\lambda^\ell\exp\bigl(b\log(\lambda^N)\bigr) =\xi_\lambda\lambda^a}$, where~$\xi_\lambda$ is a root of unity. 
It follows that 
$$
0=f_\ell(b) = \sum_{\lambda\in \Lambda}\xi_\lambda c_\lambda \lambda^a, 
$$
as wanted. 
\end{proof}

\begin{lemma}
\label{lcholds}
Assume that Conjecture~\ref{conmlrat} holds for a given~$p$. 
\begin{enumerate}
\item
\label{ibtoai}
Let ${\ell\in \{0,1, \ldots, N-1\}}$ and let ${b\in \Z_p}$ be a zero of $f_\ell(z)$. Then there exists ${i\in \{0,1, \ldots, Q-1\}}$ such that option~(L) holds for the residue class of~$i$, and such that ${a_i= \ell+Nb}$. In particular, ${b \in \Q}$, and if ${a_i\in \Z}$ then ${b\in N^{-1}\Z}$.

\item
\label{iaitob}
Conversely, let ${i\in \{0,1, \ldots, Q-1\}}$ be such that option~(L) holds for the residue class of~$i$. Then there exists ${\ell\in \{0,1,\ldots,  N-1\}}$   such that 
$$
f_\ell\left(\frac{a_i-\ell}N\right)=0.
$$ 
\end{enumerate}
\end{lemma}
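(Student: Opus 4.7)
For Part~\ref{ibtoai}, my plan is to produce an integer sequence approaching the zero~$b$, push it through the identity $f_\ell(m) = T(\ell + Nm)$, and then route the resulting integers through Conjecture~\ref{conmlrat}. First, pick integers $m_k$ with $m_k \equiv b \pmod{p^k}$; Proposition~\ref{ptrivial} applied to $f_\ell$ at~$b$ and~$m_k$ gives $\nu_p(f_\ell(m_k)) \geq \nu_p(m_k - b) \geq k$. Setting $n_k = \ell + N m_k$, the interpolation identity~\eqref{efkt} yields $\nu_p(T(n_k)) \to \infty$. By pigeonhole applied to the finitely many residue classes modulo~$Q$, some class $i \pmod Q$ contains infinitely many of the $n_k$; after passing to this subsequence, $\nu_p(T(n))$ is unbounded on that class, which rules out option~(C). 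Hence option~(L) must hold for~$i$, and for $k$ large the formula $\nu_p(T(n_k)) = \kappa_i + \mu_i \nu_p(n_k - a_i)$ forces $\nu_p(n_k - a_i) \to \infty$, i.e.\ $n_k \to a_i$ in $\Q_p$. Combining with $n_k \to \ell + Nb$ gives $a_i = \ell + Nb$, so $b = (a_i - \ell)/N \in \Q$; if moreover $a_i \in \Z$, then $Nb = a_i - \ell \in \Z$, whence $b \in N^{-1}\Z$.

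For Part~\ref{iaitob}, I reverse the construction. By Remark~\ref{rimp}, there exist integers $n_k$ with $n_k \equiv i \pmod Q$ and $\nu_p(n_k - a_i) \to \infty$. By pigeonhole, some $\ell \in \{0,1,\ldots,N-1\}$ satisfies $n_k \equiv \ell \pmod N$ for infinitely many~$k$; restrict to this subsequence and set $m_k = (n_k - \ell)/N \in \Z$. Since $\gcd(N,p) = 1$ (using $N \mid p^d - 1$ from Section~\ref{sfunction}), $N$ is a unit in $\Z_p$, and $n_k \to a_i$ in $\Q_p$ implies $m_k \to (a_i - \ell)/N$ in $\Z_p$. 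Option~(L) together with $T(n_k) = f_\ell(m_k)$ gives $\nu_p(f_\ell(m_k)) \to \infty$ for $k$ large, and continuity of $f_\ell$ then yields $f_\ell((a_i - \ell)/N) = 0$.

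The only real subtlety is the pigeonhole step in Part~\ref{ibtoai}, where one must explicitly rule out option~(C) using the unboundedness of $\nu_p(T(n_k))$ on the chosen class, and verify that the ``all but finitely many'' exception in option~(L) still leaves a cofinite subsequence on which the linear formula applies (so that passing to the limit is legitimate). Everything else is routine: continuity of the analytic functions~$f_\ell$, the interpolation identity~\eqref{efkt}, and the coprimality $\gcd(N,p) = 1$ that makes $N$ invertible in~$\Z_p$ and guarantees that $(a_i - \ell)/N$ lies in the domain of~$f_\ell$.
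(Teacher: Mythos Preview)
Your proposal is correct and follows essentially the same approach as the paper: for Part~\ref{ibtoai} you approximate~$b$ by integers, push through~\eqref{efkt}, pigeonhole into a residue class mod~$Q$, rule out~(C), and read off $a_i=\ell+Nb$ from the two $p$-adic limits; for Part~\ref{iaitob} you invoke Remark~\ref{rimp} to build the~$n_k$, pigeonhole mod~$N$, and use continuity of~$f_\ell$. If anything, you are slightly more careful than the paper in flagging the ``all but finitely many'' caveat and the coprimality $\gcd(N,p)=1$ needed so that $(a_i-\ell)/N\in\Z_p$.
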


Only item~\ref{ibtoai} will be used, but we include the converse statement for completeness. 

\begin{proof}[Proof of item~\ref{ibtoai}]
This is the argument that already appeared in the introduction. 
Let $(m_k)$ be a sequence of rational integers satisfying ${m_k\equiv b\pmod {p^k}}$, and set ${n_k=\ell+Nm_k}$. Then
$$
\nu_p(T(n_k)) = \nu_p(f_\ell(m_k)) \ge \nu_p(n_k-b) \ge k, 
$$ 
and, in particular, ${\nu_p(T(n_k)\to \infty}$ as ${k\to \infty}$. Infinitely many of the numbers~$n_k$ belong to the same residue class ${i\pmod Q}$, and we will assume in the sequel that all~$n_k$ do, by taking a subsequence.  Since ${\nu_p(T(n_k))\to \infty}$, we must have option~(L) for this residue class, and moreover, we must have ${\nu_p(n_k-a_i) \to\infty}$. Since we also have ${\nu_p(n_k-(\ell+Nb)) \to\infty}$, we obtain ${a_i=\ell+Nb}$. 
\end{proof}

\begin{proof}[Proof of item~\ref{iaitob}]
It is similar, but other way round. As in Remark~\ref{rimp}, we find a sequence of integers $(n_k)$ such that 
${n_k\equiv i \pmod Q}$ and ${n_k\equiv a_i \pmod {p^k}}$. By choosing a subsequence, we find ${\ell \in \{0,1, \ldots, N-1\}}$  that ${n_k\equiv \ell \pmod N}$ for all~$k$.

Define ${m_k = (n_k-\ell)/N}$. Then the sequence $(m_k)$ converges $p$-adically to ${(a_i-\ell)/N}$. 
Since 
${\nu_p(f_\ell(m_k))=\nu_p(T(n_k) \ge k}$, 
the sequence $(f_\ell(m_k))$ converges $p$-adically to~$0$. Hence ${f_\ell((a_i-\ell)/N)=0}$. 
\end{proof}

\begin{proof}[Proof of Theorem~\ref{thstatements}]
The implications \ref{imlholds}$\Rightarrow$\ref{izerosn}  and \ref{imlratholds}$\Rightarrow$\ref{izerosq} follow from item~\ref{ibtoai} of Lemma~\ref{lcholds}. The converse implications \ref{izerosn}$\Rightarrow$\ref{imlholds}  and \ref{izerosq}$\Rightarrow$\ref{imlratholds} follow  from Theorem~\ref{thfunml}, applied to the functions $f_\ell(z)$. Implications \ref{izerosn}$\Rightarrow$\ref{izt}  and \ref{izerosq}$\Rightarrow$\ref{iqt} follow by combining Lemma~\ref{ltwisted} with Theorems~\ref{thtwint} and~\ref{thtwrat}, respectively. Finally, the converse implications \ref{izt}$\Rightarrow$\ref{izerosn}  and \ref{iqt}$\Rightarrow$\ref{izerosq} are trivial.
\end{proof}

As a byproduct, we also established the following. 

\begin{corollary}
If Conjecture~\ref{conml} holds for the given~$p$, then the numbers $a_i$ emerging in the residue classes with option~(L) belong to the set~$\calZ_T$. If Conjecture~\ref{conmlrat} holds, then  $a_i$ belong to~$\calQ_T$. 
\end{corollary}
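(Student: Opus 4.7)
The plan is to combine the two items of Lemma~\ref{lcholds} with the equivalences already established in Theorem~\ref{thstatements}. Concretely, fix a prime~$p$ (distinct from $2,3,11$) for which Conjecture~\ref{conml} holds, and let ${i\in \{0,1,\ldots,Q-1\}}$ be any residue class for which option~(L) occurs. First I would invoke item~\ref{iaitob} of Lemma~\ref{lcholds}: it produces an index ${\ell\in\{0,1,\ldots,N-1\}}$ such that $(a_i-\ell)/N$ lies in $\Z_p$ and is a zero of the analytic function $f_\ell(z)$.

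Next I would apply the equivalence \ref{imlholds}$\Leftrightarrow$\ref{izt} of Theorem~\ref{thstatements}: since Conjecture~\ref{conml} holds, every $p$-adic zero $b$ of $f_\ell$ satisfies ${\ell+Nb\in \calZ_T}$. Applying this to $b=(a_i-\ell)/N$ gives ${a_i = \ell + N\cdot(a_i-\ell)/N \in \calZ_T}$, which is the first claim. The second claim, under Conjecture~\ref{conmlrat}, is completely parallel: Lemma~\ref{lcholds}\ref{iaitob} is stated directly for Conjecture~\ref{conmlrat}, and then the equivalence \ref{imlratholds}$\Leftrightarrow$\ref{iqt} yields ${a_i\in \calQ_T}$ by the same substitution.

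There is no real obstacle here, since all the work has been done in Theorem~\ref{thstatements} and Lemma~\ref{lcholds}; the corollary is a matter of chaining the two statements in the correct direction. The only point deserving a word of care is that Lemma~\ref{lcholds}\ref{iaitob} is written for Conjecture~\ref{conmlrat}, and I would simply note that whenever Conjecture~\ref{conml} holds it implies Conjecture~\ref{conmlrat} with the same parameters $a_i$ (now integers), so the lemma may be invoked in both settings without change.
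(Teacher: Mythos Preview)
Your argument is correct and is essentially the route the paper has in mind: the paper does not write out a separate proof but simply says the corollary is a ``byproduct'' of the material just above, and your chaining of Lemma~\ref{lcholds}\ref{iaitob} with the equivalence \ref{imlholds}$\Leftrightarrow$\ref{izt} (respectively \ref{imlratholds}$\Leftrightarrow$\ref{iqt}) of Theorem~\ref{thstatements} is precisely how that byproduct is obtained. Your remark that Conjecture~\ref{conml} implies Conjecture~\ref{conmlrat} with the same (integral) $a_i$, so that Lemma~\ref{lcholds} applies in both cases, is also correct and worth saying.
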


\section{Detecting zeros of $f_\ell(z)$}
\label{sfinding}

To make use of Theorem~\ref{thstatements}, we must develop a practical method for locating zeros of $f_\ell(z)$. 
As in the previous sections,~$p$ is  a prime number distinct from~$2$ and~$11$, and ${\ell \in \{0,1, \ldots,N-1\}}$. 

\subsection{A non-vanishing condition}

To start with, let us give a simple sufficient condition for~$f_\ell$ be non-vanishing on~$\Z_p$. 

\begin{proposition}
\label{prnonvan}
If ${p\nmid T(\ell)}$ then ${f_\ell(z)\ne 0 }$ for ${z\in \Z_p}$. 
\end{proposition}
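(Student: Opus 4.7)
The plan is to exploit the fact that $f_\ell(0)=T(\ell)$, and to show that the ``perturbation'' $f_\ell(z)-f_\ell(0)$ lies in $p\calO$ for every $z\in \Z_p$; then the ultrametric inequality will force $|f_\ell(z)|_p=|T(\ell)|_p=1$. Setting $z=0$ in the defining formula~\eqref{efell} and using $\exp(0)=1$ immediately gives
\[
f_\ell(0)=\sum_{\lambda\in \Lambda}c_\lambda \lambda^\ell = T(\ell),
\]
so the hypothesis $p\nmid T(\ell)$ is exactly the assertion $|f_\ell(0)|_p=1$.

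Next I would estimate $f_\ell(z)-f_\ell(0)$ term by term. For each $\lambda\in \Lambda$, the very definition of $N$ gives $\lambda^N\equiv 1\pmod{p\calO}$, i.e.\ $\lambda^N\in \calD(1,p^{-1})$. The identity $|\log(w)|_p=|w-1|_p$ on $\calD(1,\rho)$ (recalled in Section~\ref{ssexplog}, valid since $p\ne 2,11$ makes ${\mathbb K}$ unramified over~$\Q_p$, cf.\ Remark~\ref{rrhoone}) then yields $\nu_p(\log(\lambda^N))\ge 1$. Consequently, for $z\in \Z_p$ we have $z\log(\lambda^N)\in p\calO\subset \calD(0,\rho)$, and the identity $|\exp(w)-1|_p=|w|_p$ gives
\[
\nu_p\bigl(\exp(z\log(\lambda^N))-1\bigr)\ge 1.
\]
Since $c_\lambda\lambda^\ell\in \calO^\times$ (as noted in Section~\ref{sfunction}), each summand of
\[
f_\ell(z)-T(\ell)=\sum_{\lambda\in \Lambda}c_\lambda\lambda^\ell\bigl(\exp(z\log(\lambda^N))-1\bigr)
\]
lies in $p\calO$, hence so does the whole sum.

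Finally, writing $f_\ell(z)=T(\ell)+\bigl(f_\ell(z)-T(\ell)\bigr)$ and applying the ultrametric inequality, the hypothesis $|T(\ell)|_p=1>p^{-1}\ge |f_\ell(z)-T(\ell)|_p$ forces $|f_\ell(z)|_p=1$, and in particular $f_\ell(z)\ne 0$. There is no real obstacle here; the only mild subtlety is that the naive Lipschitz bound of Proposition~\ref{ptrivial} only yields $|f_\ell(z)-f_\ell(0)|_p\le 1$, which is insufficient, so one must exploit the extra factor of $p$ coming from $\log(\lambda^N)\in p\calO$ to upgrade this to a strict inequality.
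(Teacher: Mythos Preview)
Your proof is correct. Both your argument and the paper's establish the congruence $f_\ell(z)\equiv T(\ell)\pmod{p}$ for all $z\in\Z_p$, but by different routes: the paper observes that for integer $m$ one has $f_\ell(m)=T(\ell+mN)\equiv T(\ell)\pmod p$ by the very definition of $N$ as the period of $T$ modulo~$p$, and then invokes density of $\Z$ in $\Z_p$ and continuity to pass to all $z\in\Z_p$. You instead work directly with the analytic formula, bounding $\exp(z\log(\lambda^N))-1$ via the identities $|\log w|_p=|w-1|_p$ and $|\exp w - 1|_p=|w|_p$, which lets you conclude $f_\ell(z)-T(\ell)\in p\calO$ for every $z$ without a limiting argument. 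Your route is a bit more self-contained within the $p$-adic analytic framework and makes explicit where the extra factor of~$p$ comes from; the paper's route is shorter and leans on the arithmetic meaning of~$N$. The underlying mechanism, namely $\lambda^N\equiv 1\pmod p$, is the same in both.
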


\begin{proof}
By the definition of~$N$ we have ${f(n)\equiv f(\ell) \pmod p}$ when ${n\equiv \ell\pmod N}$. In particular, for such~$n$ we have ${|T(n)|_p=|T(\ell)|_p=1}$. In other words, for ${m\in \Z}$ we have ${|f_\ell (m)|_p=1}$. By continuity, ${|f_\ell (z)|_p=1}$ for ${z\in \Z_p}$. This completes the proof. 
\end{proof}

\subsection{The first vanishing condition}
Now let study sufficient conditions for $f_\ell(z)$ to have a zero $\Z_p$. As follows from above, the first condition must be
\begin{equation} 
\label{efirstcond}
\framebox{$p\mid T(\ell)$.}
\end{equation}
This will be assumed for the rest of the section. 

It will be more convenient to work with the function 
$$
g(z) = \frac{f_\ell(z)}{p}
$$
instead of $f_\ell(z)$ itself. 
For further use,  note that $g(z)$ has the expansion 
\begin{equation}
\label{expg}
g(z) =\sum_{k=0}^\infty \beta_k z^k
\end{equation}
with the following properties:
\begin{align}
\label{ebeinzp}
\beta_k &\in \Z_p && (k=0,1,2\ldots);\\
\label{epmidbe}
\beta_k &\in p\Z_p && (k=2,3,\ldots);\\
\label{ebetoz}
|\beta_k|_p &\to 0 && (k\to \infty). 
\end{align}
Indeed,
\begin{equation}
\label{ebetaz}
\beta_0= g(0)=\frac{f_\ell(0)}p=\frac{T(\ell)}p \in \Z
\end{equation}
by our choice of~$\ell$. Furthermore,  we have 
\begin{equation}
\label{ebek}
\beta_k =\frac{p^{k-1}}{k!}\sum_{\lambda\in \Lambda} c_\lambda\lambda^\ell\left(\frac{\log(\lambda^N)}{p}\right)^k.  
\end{equation}
Since ${\lambda^N\equiv 1\pmod p}$, we have ${\log(\lambda^N)\equiv 0\pmod p}$, which shows that the sum in~\eqref{ebek} belongs to~$\Z_p$. We also have ${p^{k-1}/k! \in \Z_p}$ when ${p\ge 3}$ and ${k\ge 1}$. Hence ${\beta_k\in \Z_p}$ for ${k\ge 1}$ as well.  This proves~\eqref{ebeinzp}. 

Next, since the sum in~\eqref{ebek} belongs to~$\Z_p$, we have
${\nu_p (\beta_k) \ge k-1-\nu_p(k!)}$. It is known that ${\nu_p(k!) <k/(p-1)}$ for ${k\ge 1}$. In particular, ${\nu_p(k!) < k/2}$ for ${p\ge 3}$. It follows that  ${\nu_p (\beta_k) >0}$ for ${k\ge 2}$ and ${\nu_p(\beta_k) \to +\infty}$ as ${k\to \infty}$. This proves~\eqref{epmidbe} and~\eqref{ebetoz}.  

Note the following consequence of~\eqref{epmidbe}: for ${z\in \Z_p}$ we have
\begin{equation}
\label{egpcgp}
g'(z) \equiv g'(0) \pmod p. 
\end{equation}
Indeed, 
$$
g'(z)= \beta_1+ \sum_{k=2}^\infty k\beta_kz^{k-1}. 
$$
Here ${\beta_1=g'(0)}$ and each term in the sum is divisible by~$p$ by~\eqref{epmidbe}.  

\subsection{The second vanishing condition}
The second condition that we impose is 
\begin{equation}
\label{egpz}
\framebox{$g'(0)\ne 0\pmod p.$}
\end{equation}
This condition means that ${\beta_1=g'(0) \in \Z_p^\times}$. 
Hence there exists ${b_0\in\Z}$   such that 
\begin{equation}
\label{edefbz}
b_0\equiv -\beta_0\beta_1^{-1}\pmod p. 
\end{equation}
Substituting ${z=b_0}$ into expansion~\eqref{expg}, and using~\eqref{epmidbe}, we obtain 
\begin{equation}
\label{epmidgm}
p\mid g(b_0). 
\end{equation}
On the other hand,~\eqref{egpcgp} and~\eqref{egpz} imply that ${g'(b_0)\equiv g'(0)\not\equiv 0\pmod p}$. 
Together with~\eqref{epmidgm} this can be expressed as 
$$
|g(b_0)|_p <1, \qquad |g'(b_0)|_p=1.
$$
Now using Hensel's Lemma as given in Proposition~\ref{phens}, we find ${b\in \Z_p}$ such that ${g(b)=0}$. Then we also have ${f_\ell(b)=0}$.   

Actually, we have even more.

\begin{proposition}
\label{prvan}
Assume that~\eqref{efirstcond} and~\eqref{egpz} hold. Then ${f_\ell(z)}$ has exactly one zero on~$\Z_p$. 
\end{proposition}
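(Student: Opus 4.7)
The plan is to establish uniqueness via Strassman's Theorem (Theorem~\ref{thstra}); existence of at least one zero has already been produced by the Hensel argument immediately preceding the statement, so the whole proposition follows by combining the two.

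First I would read off the shape of the coefficients $\beta_k$ of the expansion $g(z)=\sum_{k\ge 0}\beta_kz^k$. By assumption~\eqref{egpz} we have $\beta_1=g'(0)\in\Z_p^\times$, hence $|\beta_1|_p=1$. By~\eqref{epmidbe} we have $\beta_k\in p\Z_p$ for all $k\ge 2$, so $|\beta_k|_p\le p^{-1}<1$ for $k\ge 2$. Finally $|\beta_0|_p\le 1$ by~\eqref{ebeinzp}. Therefore
\[
\max\{|\beta_n|_p:n=0,1,2,\ldots\}=1,
\]
and the largest index~$m$ at which this maximum is attained is $m=1$, because every $\beta_k$ with $k\ge 2$ has strictly smaller $p$-adic absolute value. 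Thus in the notation of Theorem~\ref{thstra} we have $\mu=1$.

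Next I would apply Strassman's Theorem to $g(z)$ on $\Z_p$ (which is legitimate since~\eqref{ebetoz} ensures convergence and $g$ does not vanish identically, because $\beta_1\ne 0$). The conclusion is that $g$ has at most~$1$ zero in $\Z_p$. On the other hand, the Hensel's Lemma argument preceding the statement of the proposition produces an element $b\in\Z_p$ with $g(b)=0$. Hence $g$ has exactly one zero in $\Z_p$, and since $f_\ell=pg$, the functions $f_\ell$ and $g$ have the same zero set; so $f_\ell$ has exactly one zero on $\Z_p$.

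There is essentially no obstacle beyond correctly identifying the Strassman index $\mu$: the two explicit hypotheses~\eqref{efirstcond} and~\eqref{egpz}, together with the coefficient bounds~\eqref{ebeinzp}--\eqref{ebetoz} that were already verified above, give $\mu=1$ directly, and the uniqueness is then immediate. The only subtle point worth flagging, should one want to be careful, is that $\beta_0$ is allowed to have $|\beta_0|_p=1$ (it need not lie in $p\Z_p$), but this does not enlarge $\mu$, since Strassman's $\mu$ is defined as the \emph{largest} index achieving the maximum, and that largest index is still~$1$.
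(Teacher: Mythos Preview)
Your proof is correct and follows essentially the same approach as the paper: existence comes from the preceding Hensel argument, and uniqueness from Strassman's Theorem with $\mu=1$, deduced from $|\beta_1|_p=1$ and~\eqref{epmidbe}. Your remark about $|\beta_0|_p$ possibly equaling~$1$ is a harmless extra observation; the paper simply omits it.
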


\begin{proof}
Existence of a zero is already proved above. To show uniqueness, we invoke Strassman's Theorem~\ref{thstra}. Since ${|\beta_1|_p=1}$ by~\eqref{egpz}, the quantity~$\mu$ from Theorem~\ref{thstra} must be~$1$ by~\eqref{epmidbe}. Whence the result. 
\end{proof}

\section{Sufficient conditions for validity and for failure of Con\-jec\-tures~\ref{conml} and~\ref{conmlrat}} 
\label{sex}

To implement this in practice, we need to express condition~\eqref{egpz} in terms of the Tribonacci numbers $T(n)$ rather than the function $g(z)$. This is not hard. For ${z \in p\calO}$ we have 
$$
\log z \equiv z-1 \pmod {p^2}. 
$$
In particular, for ${\lambda\in \Lambda}$ we 
$$
\frac{\log (\lambda^N)}p\equiv \frac{\lambda^N-1}{p} \pmod p.  
$$
Hence, 
\begin{equation}
\label{egprimezer}
g'(0) =\beta_1= \sum_{\lambda\in \Lambda} c_\lambda\lambda^\ell\frac{\log(\lambda^N)}{p}\equiv \sum_{\lambda\in \Lambda} c_\lambda\lambda^\ell\frac{\lambda^N-1}{p} \equiv \frac{T(\ell+N)-T(\ell)}{p} \pmod p. 
\end{equation}
Therefore condition~\eqref{egpz} is equivalent to 
\begin{equation}
\label{egpzt}
\framebox{$T(\ell+N)\not\equiv T(\ell) \pmod {p^2}.$} 
\end{equation}

Now, to disprove Conjecture~\ref{conml} for some prime number~$p$, we must find~$\ell$ such that both~\eqref{efirstcond} and~\eqref{egpzt} are satisfied, and such that the resulting zero~$b$ of $f_\ell(z)$ satisfies 
$$
\ell+bN\not\in {\mathcal Z}_T.
$$
It suffices to show that 
$$
\ell+bN\not\equiv 0, -1,-4,-17 \pmod p. 
$$
Moreover, since ${b\equiv b_0\pmod p}$, this can be re-written as 
$$
\ell+b_0N\not\equiv 0, -1,-4,-17 \pmod p. 
$$
Using~\eqref{edefbz} and~\eqref{egprimezer}, this translates into 
\begin{equation}
\label{econdbzer}
\framebox{$u:=\displaystyle \ell- \frac{T(\ell)}p \left(\frac{T(\ell+N)-T(\ell)}{p}\right)^{-1} N\not\equiv 0, -1,-4,-17 \pmod p$. }
\end{equation}
Similarly, when ${p\ne 3}$, then Conjecture~\ref{conmlrat} would fail if 
\begin{equation}
\label{econdbq}
\framebox{$u\not\equiv 0, -1,-4,-17,1/3,-5/3 \pmod p$. }
\end{equation}

Let us summarize what we proved.

\begin{theorem}
\label{thm:NoML}
Let ${p\ne 2,11}$ be a prime number, and let ${\ell \in\{0,1, \ldots N_p-1\}}$ be such that~\eqref{efirstcond},~\eqref{egpzt} and~\eqref{econdbzer} hold true. Then Conjecture~\ref{conml} fails for this~$p$. Similarly, if ${p\ne 3}$ and ~\eqref{efirstcond},~\eqref{egpzt} and~\eqref{econdbq} hold true then Conjecture~\ref{conmlrat} fails for this~$p$.\qed
\end{theorem}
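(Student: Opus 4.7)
The plan is simply to stitch together the machinery already built in the preceding sections and verify that the three hypotheses translate precisely into the conditions of Theorem~\ref{thstatements}~(c) (respectively, (f)) being violated at the chosen~$\ell$.

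First, I would invoke Proposition~\ref{prvan}: hypotheses~\eqref{efirstcond} and~\eqref{egpzt} (the latter being equivalent to~\eqref{egpz} via the congruence~\eqref{egprimezer}) guarantee that the analytic function $f_\ell(z)$ has exactly one zero ${b\in \Z_p}$. Moreover, Hensel's Lemma furnishes~$b$ as a lift of the integer $b_0$ defined by~\eqref{edefbz}, so that ${b\equiv b_0 \pmod p}$. Consequently ${\ell+Nb\equiv \ell+Nb_0 \pmod p}$, and substituting the explicit formula~\eqref{edefbz} together with~\eqref{egprimezer} yields ${\ell+Nb\equiv u\pmod p}$, where~$u$ is the quantity appearing in~\eqref{econdbzer} and~\eqref{econdbq}.

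For the first assertion, hypothesis~\eqref{econdbzer} then reads ${\ell+Nb\not\equiv 0,-1,-4,-17\pmod p}$, which in particular means ${\ell+Nb\notin \calZ_T}$. By the equivalence \ref{imlholds}$\Leftrightarrow$\ref{izt} in Theorem~\ref{thstatements}, the existence of such a zero of $f_\ell(z)$ implies that Conjecture~\ref{conml} fails for this~$p$. For the second assertion, the same argument, now with the additional assumption ${p\ne 3}$ (so that the classes ${1/3,-5/3\pmod p}$ are defined) and using hypothesis~\eqref{econdbq}, gives ${\ell+Nb\notin \calQ_T}$, and the equivalence \ref{imlratholds}$\Leftrightarrow$\ref{iqt} of Theorem~\ref{thstatements} yields the failure of Conjecture~\ref{conmlrat}.

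There is no genuine obstacle here; the theorem is essentially a packaging statement that makes the criterion effective. The only mild care point is to track that every step is congruence-safe modulo~$p$: the definition of~$u$ involves an inverse modulo~$p$, which is legitimate precisely because~\eqref{egpzt} says the relevant denominator is a $p$-adic unit, and the replacement of~$b$ by~$b_0$ in the residue of $\ell+Nb\pmod p$ is legitimate because $N\in \Z$. Once these elementary checks are made, the proof reduces to applying Proposition~\ref{prvan} and Theorem~\ref{thstatements} in sequence.
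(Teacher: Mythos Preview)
Your proposal is correct and follows essentially the same route as the paper: the paper's argument is precisely the discussion immediately preceding the theorem statement (which is why it ends with \qed), and it too reduces the claim to producing a Hensel zero $b\equiv b_0\pmod p$ via Proposition~\ref{prvan}, rewriting $\ell+Nb\pmod p$ as the quantity~$u$, and then appealing (implicitly) to the characterization in Theorem~\ref{thstatements}. Your write-up is slightly more explicit in naming the equivalence \ref{imlholds}$\Leftrightarrow$\ref{izt} (resp.\ \ref{imlratholds}$\Leftrightarrow$\ref{iqt}), but the content is the same.
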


Now let us give  sufficient conditions of validity of each conjecture. For Conjecture~\ref{conmlrat} we will restrict to the primes congruent to~$2$ modulo~$3$. 

\begin{theorem}
\label{thsuffml}
Let~$p$ be a prime number distinct from~$2$ and~$11$. 
Assume that for every~$\ell$ satisfying~\eqref{efirstcond}, condition~\eqref{egpzt} holds true as well, and the following also holds:  ${\ell\equiv a\pmod N}$ for some ${a\in \calZ_T}$. Then Conjecture~\ref{conml} holds for this~$p$.   
\end{theorem}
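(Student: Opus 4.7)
My plan is to apply Theorem~\ref{thstatements} (part 1), specifically the equivalence \ref{imlholds}$\Leftrightarrow$\ref{izt}: Conjecture~\ref{conml} holds for $p$ if and only if, for every $\ell \in \{0,1,\ldots,N-1\}$, every zero $b \in \Z_p$ of $f_\ell(z)$ satisfies $\ell + Nb \in \calZ_T$. I would verify this condition $\ell$ by $\ell$, splitting into the cases $p \nmid T(\ell)$ and $p \mid T(\ell)$.

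In the first case, Proposition~\ref{prnonvan} immediately gives that $f_\ell$ has no zeros on $\Z_p$, so the required implication holds vacuously. In the second case, the hypothesis of the theorem supplies two ingredients simultaneously: condition~\eqref{egpzt} holds, so Proposition~\ref{prvan} guarantees that $f_\ell$ has exactly one zero in $\Z_p$; and there exists $a \in \calZ_T$ with $\ell \equiv a \pmod N$. Writing $a = \ell + Nm$ with $m \in \Z$, formula~\eqref{efkt} yields $f_\ell(m) = T(\ell + Nm) = T(a) = 0$, so $m$ is a zero of $f_\ell$ lying in $\Z \subset \Z_p$. By uniqueness, the sole zero $b$ of $f_\ell$ in $\Z_p$ must coincide with $m$, and hence $\ell + Nb = a \in \calZ_T$, as needed.

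Combining these two cases verifies condition \ref{izt} of Theorem~\ref{thstatements} for every $\ell$, from which Conjecture~\ref{conml} follows for the given~$p$. There is no substantive obstacle here: the argument is essentially a bookkeeping assembly of the results already established in Sections~\ref{sstatements} and~\ref{sfinding}. The only point requiring attention is that the abstract $p$-adic zero produced by Proposition~\ref{prvan} must be identified with a rational integer, and the congruence hypothesis $\ell \equiv a \pmod N$ with $a \in \calZ_T$ is precisely what delivers an explicit such integer $m$ to match against the unique zero.
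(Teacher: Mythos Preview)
Your proposal is correct and follows essentially the same argument as the paper's own proof: split according to whether $p\mid T(\ell)$, invoke Proposition~\ref{prnonvan} in the first case and Proposition~\ref{prvan} in the second, exhibit the integer $m=(a-\ell)/N$ as an explicit zero via~\eqref{efkt}, and conclude by the uniqueness of the zero that condition~\ref{izt} of Theorem~\ref{thstatements} is satisfied. The only difference is notational (the paper writes $b$ for your $m$).
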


For Conjecture~\ref{conmlrat} we will restrict to the primes congruent to~$2$ modulo~$3$. 

\begin{theorem}
\label{thsuffmlrat}
Let~$p$ be a prime number satisfying $\Lambda \subset \Q_p$ and $3 \nmid N$. Assume that for every~$\ell$ satisfying~\eqref{efirstcond}, condition~\eqref{egpzt} holds true as well, and the following also holds:  ${\ell\equiv a\pmod N}$ for some ${a\in \calQ_T}$. Then Conjecture~\ref{conmlrat} holds for this~$p$.   
\end{theorem}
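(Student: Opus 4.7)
The plan is to apply Theorem~\ref{thstatements}, which reduces Conjecture~\ref{conmlrat} for the prime~$p$ to the assertion that for every ${\ell \in \{0, \ldots, N-1\}}$, every zero ${b \in \Z_p}$ of $f_\ell(z)$ satisfies ${\ell + Nb \in \calQ_T}$. Fix such an $\ell$ and $b$. The existence of a zero forces ${p \mid T(\ell)}$ by the contrapositive of Proposition~\ref{prnonvan}, so condition~\eqref{efirstcond} holds and the standing hypothesis becomes available: condition~\eqref{egpzt} holds as well, and ${\ell \equiv a \pmod N}$ for some ${a \in \calQ_T}$. Combining \eqref{efirstcond} with~\eqref{egpzt}, Proposition~\ref{prvan} asserts that~$b$ is the \emph{unique} zero of $f_\ell$ on $\Z_p$. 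It therefore suffices to show that the candidate point ${b_a := (a - \ell)/N}$ lies in $\Z_p$ and satisfies ${f_\ell(b_a) = 0}$; uniqueness will then force ${b = b_a}$ and hence ${\ell + Nb = a \in \calQ_T}$.

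Using the paper's convention for rational congruence modulo~$N$, together with ${3 \nmid N}$ and the harmless exclusion $p \neq 3$, the denominator of $b_a$ divides~$3$ and is coprime to~$p$, so $b_a \in \Z_p$. For each ${\lambda \in \Lambda}$ I would set ${\mu_\lambda := \lambda^\ell \exp(b_a \log \lambda^N) \in \Z_p^\times}$ and let ${q \in \{1, 3\}}$ be the denominator of~$a$. Then ${\mu_\lambda^q = \lambda^{q(\ell + Nb_a)} = \lambda^{qa}}$, so each $\mu_\lambda$ is a legitimate choice of the rational power $\lambda^a$. A direct computation using ${\prod_{\lambda \in \Lambda} \lambda = 1}$ (Vieta applied to $P$) yields ${\prod_\lambda \mu_\lambda = \exp(b_a \log 1) = 1}$, so the triple $(\mu_\lambda)_{\lambda \in \Lambda}$ is \emph{suitably defined} in the sense of Section~\ref{srat}.

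Since ${a \in \calQ_T}$, some choice of powers of $\lambda^a$ and some roots of unity witness a twisted vanishing relation; rewriting that relation in terms of our $\mu_\lambda$, by absorbing the root of unity relating the two choices of $\lambda^a$, produces roots of unity ${\xi_\lambda}$ with ${\sum_\lambda \xi_\lambda c_\lambda \mu_\lambda = 0}$. The \emph{moreover} clause of Theorem~\ref{thtwrat}, applied to the suitable powers $\mu_\lambda$, now forces ${\xi_{\lambda_1} = \xi_{\lambda_2} = \xi_{\lambda_3}}$, collapsing the relation to ${\sum_\lambda c_\lambda \mu_\lambda = 0}$, which is exactly ${f_\ell(b_a) = 0}$. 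The main obstacle is precisely this last step: extracting an \emph{untwisted} vanishing from the twisted vanishing built into the definition of $\calQ_T$. This is handled cleanly by the moreover clause once one verifies the Vieta normalisation ${\prod_\lambda \mu_\lambda = 1}$, and the whole argument can be viewed as a conceptual repackaging of the ad hoc proof of Section~\ref{sthfive}, which treated the special case ${a \in \{1/3, -5/3\}}$ by hand under the stronger hypothesis ${p \equiv 2 \pmod 3}$.
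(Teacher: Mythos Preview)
Your argument follows the paper's proof essentially verbatim: reduce via Theorem~\ref{thstatements} to condition~\ref{iqt}, use Propositions~\ref{prnonvan} and~\ref{prvan} to force $f_\ell$ to have a unique zero whenever~\eqref{efirstcond} holds, and then exhibit ${b_a=(a-\ell)/N}$ as that zero. The only difference is in how you justify ${f_\ell(b_a)=0}$ for ${a\in\{1/3,-5/3\}}$: the paper simply points back to equation~\eqref{eflb} together with the remark at the end of Section~\ref{sthfive} that the computation there extends from ${p\equiv 2\pmod 3}$ to the hypotheses ${\Lambda\subset\Q_p}$ and ${3\nmid N}$, whereas you route the same step through the ``moreover'' clause of Theorem~\ref{thtwrat}.

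One caution on that last step. The phrase ``suitably defined'' in Theorem~\ref{thtwrat} is never made precise in this paper (the full proof is deferred to~\cite{LRS}), and your reading of it as the Vieta normalisation ${\prod_\lambda\mu_\lambda=1}$ cannot be the whole story: there are three inequivalent product-$1$ triples of cube roots, and if $(\tilde\lambda_i)$ is the one from Proposition~2.1 satisfying ${\sum c_i\tilde\lambda_i=0}$, then any other product-$1$ triple ${\mu_i=\omega_i\tilde\lambda_i}$ with the $\omega_i$ not all equal admits the twisted vanishing ${\sum\omega_i^{-1}c_i\mu_i=0}$ with unequal coefficients---so the moreover clause cannot hold for \emph{every} product-$1$ triple. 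Thus the inference ``$\prod\mu_\lambda=1$, hence the moreover clause applies to $(\mu_\lambda)$'' is not justified as written. That said, the paper's own one-line proof is equally terse at exactly this point (it asserts but does not prove the generalisation of~\eqref{eflb} beyond ${p\equiv 2\pmod 3}$), so your argument matches the paper's level of rigour; a watertight treatment of this step lives in~\cite{LRS}.
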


\begin{proof}[Proof of Theorem~\ref{thsuffml}]
Fix ${\ell\in \{0,1, \ldots, N-1\}}$. If ${p\nmid T(\ell)}$ then $f_\ell(z)$ has no zeros on $\Z_p$, see Proposition~\ref{prnonvan}. Now assume that ${p\mid T(\ell)}$. Proposition~\ref{prvan} implies that $f_\ell(z)$ has a single zero on $\Z_p$. 

Now let ${a\in \calZ_T}$ be such that ${\ell\equiv a\pmod N}$. Write ${a=\ell+Nb}$ with ${b\in \Z}$. Then 
${f_\ell(b)= T(\ell+Nb)=0}$. Thus, the single zero of $f_\ell(z)$ is~$b$.

We have just showed that condition~\ref{izt} of Theorem~\ref{thstatements} holds true for this~$p$. The theorem is proved.  
\end{proof}

The proof of Theorem~\ref{thsuffmlrat} is the same, with the exception that this time we may have ${b\notin \Z}$. However, when $\Lambda \subset \Q_p$, $p \ne 2, 11$ and due to $3 \nmid N$, we still have ${f_\ell(b)=0}$, see~\eqref{eflb}.

\section{The proofs of Theorems \ref{thm:p=3}, \ref{thm5to283}, and \ref{thm5to283rat}} 
\label{sec:comput}

We start with the negative part (part (i)) of Theorem \ref{thm5to283}.  We implemented the algorithms implied by Theorem \ref{thm:NoML} in Mathematica for all primes $p\le 600$. There are $109$ primes $p\le 600$. For each prime $p$, we first computed $N:=N_p$, the period of $(T_n)_{n\in {\mathbb Z}}$ 
modulo $p$. Then for each $p$ we searched $\ell$ such that ~\eqref{efirstcond},~\eqref{egpzt} and~\eqref{econdbzer} all hold true. This calculation took a few minutes and found such an example $\ell$ for all 
$p\le 600$ except for $p\in \{2,3,11, 83,103,163,397\}$. See Table~\ref{taells} for the actual data. This proves the negative part of Theorem \ref{thm5to283}.
\begin{table}
\caption{Data for the proofs of Theorems ~\ref{thm5to283} and ~\ref{thm5to283rat}. A $^*$ means that for this prime, Theorem ~\ref{thm5to283rat} does not conclude.}
\label{taells}
\medskip
{\scriptsize{
\begin{tabular}{|c|c|c|c||c|c|c|c||c|c|c|c|}
\hline $p$ & $N$ & $\ell$ & $u$ &  $p$ & $N$ & $\ell$ & $u$ & $p$ & $N$ & $\ell$ & $u$\\  \hline
5 & 31 & 21 & 2 & 179 & 32221 & 100 & 114 & 379 & 48007 & 309 & 76 \\ \hline
7 & 48 & 5 & 1 & 181 & 10981 & 25 & 66 & 383 & 147073 & 219 & 338 \\ \hline
13 & 168 & 6 & 4 & 191 & 36673 & 72 & 22 & 389 & 151711 & 1739 & 354 \\ \hline
17 & 96 & 28 & 7 & 193 & 4656 & 171 & 76 & $401^*$ & 400 & 265 & 132 \\ \hline
19 & 360 & 18 & 12 & 197 & 3234 & 382 & 84 & 409 & 41820 & 365 & 310 \\ \hline
23 & 553 & 29 & 15 & 199 & 198 & 26 & 40 & $419^*$ & 418 & 277 & 138 \\ \hline
29 & 140 & 77 & 24 & 211 & 5565 & 83 & 203 & 421 & 420 & 118 & 214 \\ \hline
31 & 331 & 14 & 22 & 223 & 16651 & 361 & 38 & 431 & 61920 & 465 & 51 \\ \hline
37 & 469 & 19 & 17 & 227 & 17176 & 34 & 57 & 433 & 62641 & 385 & 334 \\ \hline
41 & 560 & 35 & 15 & 229 & 17557 & 249 & 61 & 439 & 6424 & 781 & 160 \\ \hline
43 & 308 & 82 & 11 & 233 & 9048 & 36 & 126 & 443 & 196693 & 516 & 21 \\ \hline
$47^*$ & 46 & 31 & 16 & 239 & 4760 & 28 & 85 & 449 & 202051 & 107 & 229 \\ \hline
$53^*$ & 52 & 33 & 16 & 241 & 29040 & 506 & 57 & 457 & 34808 & 858 & 30 \\ \hline
59 & 3541 & 64 & 34 & 251 & 63253 & 304 & 218 & 461 & 35420 & 192 & 9 \\ \hline
61 & 1860 & 68 & 34 & 257 & 256 & 54 & 34 & 463 & 71611 & 624 & 199 \\ \hline
67 & 1519 & 100 & 43 & 263 & 23056 & 37 & 214 & 467 & 218557 & 1269 & 70 \\ \hline
71 & 5113 & 132 & 62 & $269^*$ & 268 & 177 & 88 & 479 & 76480 & 56 & 8 \\ \hline
73 & 5328 & 31 & 30 & 271 & 73440 & 331 & 165 & 487 & 79219 & 131 & 85 \\ \hline
79 & 3120 & 18 & 76 & 277 & 12788 & 61 & 191 & 491 & 10045 & 802 & 289 \\ \hline
89 & 8011 & 109 & 8 & 281 & 13160 & 536 & 62 & $499^*$ & 166 & 109 & 331 \\ \hline
97 & 3169 & 19 & 51 & 283 & 13348 & 777 & 193 & 503 & 42168 & 107 & 497 \\ \hline
101 & 680 & 186 & 23 & 293 & 28616 & 458 & 200 & 509 & 259591 & 1228 & 433 \\ \hline
107 & 1272 & 184 & 52 & 307 & 31416 & 30 & 163 & 521 & 271963 & 2058 & 220 \\ \hline
109 & 990 & 105 & 62 & 311 & 310 & 123 & 58 & 523 & 273528 & 237 & 16 \\ \hline
113 & 12883 & 172 & 15 & 313 & 32761 & 29 & 184 & 541 & 58536 & 633 & 200 \\ \hline
127 & 5376 & 586 & 30 & 317 & 100807 & 36 & 186 & 547 & 149604 & 104 & 72 \\ \hline
131 & 5720 & 79 & 101 & 331 & 36631 & 188 & 4 & 557 & 103416 & 509 & 424 \\ \hline
137 & 18907 & 11 & 5 & 337 & 16224 & 320 & 103 & 563 & 52828 & 87 & 232 \\ \hline
139 & 3864 & 34 & 49 & 347 & 40136 & 156 & 244 & 569 & 53960 & 322 & 49 \\ \hline
149 & 7400 & 10 & 38 & 349 & 17400 & 1428 & 33 & 571 & 40755 & 527 & 155 \\ \hline
151 & 2850 & 223 & 142 & 353 & 124963 & 95 & 38 & 577 & 111169 & 361 & 85 \\ \hline
157 & 8269 & 71 & 107 & 359 & 42960 & 1204 & 115 & $587^*$ & 293 & 96 & 194 \\ \hline
167 & 9296 & 41 & 68 & 367 & 45019 & 692 & 99 & 593 & 3256 & 849 & 422 \\ \hline
173 & 2494 & 314 & 25 & 373 & 139128 & 279 & 188 & 599 & 598 & 257 & 485 \\ \hline

  \end{tabular}
 }}
\end{table}
 
As for part (ii) of Theorem \ref{thm5to283}, when $p\in\{83,397\}$, we have that $N=N_p$ is $287$ and $132$, respectively. In  both cases, the only $\ell\in \{0,1,\ldots,N-1\}$ such that $T(\ell)\equiv 0\pmod p$ are 
$\ell\equiv -17,-4,-1,0\pmod N$. Furthermore for $\ell\in {\mathcal Z}_T$, we have $(T(N+\ell)-T(\ell))/p=T(N+\ell)/p\not\equiv 0\pmod p$. Thus, taking $\ell\in  {\mathcal Z}_T$ and writing for positive integers $n\equiv \ell\pmod N$, $z=(n-\ell)/N$,  
we have that 
$$
T(n)=f_{\ell}(z)=pg(z)=p\sum_{k\ge 0} \beta_k z^k.
$$
Note that $\beta_0=g(0)=T(\ell)/p=0$, and $\beta_1=g'(0)\equiv T(N+\ell)/p\pmod p$, so $|\beta_1|_p=|g'(0)|_p=1$. Further, since $\nu_p(p^{k-1}/(k-1)!)\ge 1$ for all $k\ge 2$, it follows that $|\beta_k|_p<1$ for $k\ge 2$. This shows that 
$$
\nu_p(T(n))=1+\nu_p(g(z))=1+\nu_p\left(\sum_{k\ge 1} \beta_k z^k\right)=1+\nu_p(z)=1+\nu_p(n-\ell),
$$
which proves Item (ii) of Theorem \ref{thm5to283}. 

Theorem \ref{thm5to283rat} is proved similarly. Only ~\eqref{econdbzer} is exchanged for ~\eqref{econdbq} and $\calZ_T$ for $\calQ_T$.

\begin{proof}[Proof of Theorem \ref{thm:p=3}]
For $p=3$, we have $N=13$. The only $\ell\in \{0,\ldots,12\}$ such that $T(\ell)\equiv 0\pmod 3$ are $\ell\in \{0,7,9,12\}$. When $\ell=7$, the subsequence $T(13n+\ell)$ is constant 6 modulo 9, and so $v_3(T(n)) = 1$ if $n \equiv 7 \mod 13$.

Next assume that $\ell= 0,~-1$. Then $g(0)$ is congruent modulo $3$ to one of $T(13)/3,~T(12)/3$ and they are both $0$, so we need additional terms. We have
\begin{align*}
\beta_1 & =  \sum_{\lambda \in \Lambda} c_{\lambda} \lambda^{\ell} \left(\frac{\log \lambda^{N}}{3}\right)\\
& \equiv   \sum_{\lambda \in \Lambda} c_{\lambda}\lambda^{\ell}\left(\frac{\lambda^{N}-1}{3}-\frac{(\lambda^{N}-1)^2}{2\cdot 3}\right)&&\pmod {3^2}\\
& \equiv   \frac{T(N+\ell)-T(\ell)}{3}-\frac{T(2N+\ell)-2T(N+\ell)+T(\ell)}{2 \cdot 3}&&\pmod {3^2}.
\end{align*}
For both $\ell = 0,~-1$, we have $\nu_3((T(N+\ell)-T(\ell)))=2$ and 
$$
\nu_3(T(2N+\ell)-2T(N+\ell)+T(\ell))=3.
$$ 
Thus, $\nu_3(\beta_1)=1$. 
For $j\ge 4$, we get that $\nu_3(\beta_j) \ge \nu_3(3^{j-1}/j!)\ge 2$. It remains to study $\nu_3(\beta_j)$ for $j=2,3$. But we have 
\begin{align*}
\beta_j & =  \frac{3^{j-1}}{j!} \sum_{\lambda \in \Lambda} c_{\lambda} \lambda^{\ell} \left(\frac{\log \lambda^{N}}{3}\right)^j\\
& \equiv  \frac{3^{j-1}}{j!}  \sum_{\lambda \in \Lambda} c_{\lambda}\lambda^{\ell}\left(\frac{\lambda^{N}-1}{3}\right)^j&&\pmod {3^j}\\
& \equiv   \frac{3^{j-1}}{j!} \left(\frac{\sum_{i=0}^j (-1)^{j-i} \binom{j}{i} T((j-i)N+\ell)}{3^j}\right)&&\pmod {3^j},
\end{align*}
and computations show that for $j=2,3$, we have 
\begin{align*}
\nu_3(T(2N+\ell)-2T(N+\ell)+T(\ell)) & =  3;\\
\nu_3(T(3N+\ell)-3T(2N+\ell)+3T(N+\ell)-T(\ell)) & =  5.
\end{align*}
Since also $\nu_3(3^{j-1}/j!)=1$  for $j=2,3$, we get that $\nu_3(\beta_2)\ge 2,~\nu_3(\beta_3)\ge 2$. Thus, for $n\equiv \ell\pmod {13}$, we have
$$
\nu_3(T(n))=\nu_3\left(\beta_1z+\sum_{k\ge 2} \beta_k z^k\right)=\nu_3(\beta_1 z)=2+\nu_3(n-\ell). 
$$ 
It remains to study the case $\ell=9$. For this, we take $N_1=3N=39$. This case then becomes $\ell\equiv -4,-17,9\pmod {39}$. For $\ell = 9$, the subsequence $T(3Nn + \ell)$ is constantly $3^4 \mod 3^5$, and so $v_3(T(n)) = 4$ for all $n \equiv 9 \mod 3N$. So let $\ell = -4, -17$. Then, if $n\equiv \ell\pmod {3N}$, putting $z=(n-\ell)/3N$, we get 
$$
T(n)=3^2g(z),
$$
where now 
$$
g(z)=\sum_{\lambda\in \Lambda} c_{\lambda} \lambda^{\ell} \left(\frac{\exp(\log \lambda^{3Nz})}{3^2}\right)=\sum_{k\ge 0} \beta_k z^k.
$$
For both possibilities of $\ell$, $\beta_0 = 0$. However, modulo $3^4$ we have
\begin{align*}
\beta_1 & =  \sum_{\lambda\in \Lambda} c_{\lambda} \lambda^{\ell} \left(\frac{\log \lambda^{3N}}{3^2}\right)\\
& \equiv  \frac{1}{3^2}\sum_{\lambda\in \Lambda}c_{\lambda} \lambda^{\ell} \left(\lambda^{3N}-1-\frac{(\lambda^{3N}-1)^2}{2}\right)\\
 & \equiv  \frac{1}{3^2}\left(T(3N+\ell)-T(\ell)-\frac{T(2\cdot 3N+\ell)-2T(3N+\ell)+T(\ell)}{2}\right).
\end{align*}
In both cases, $\nu_3(T(3N+\ell) - T(\ell)=5$ but $\nu_3(T(2\cdot N+\ell)-2T(3N+\ell) + T(\ell))=6$. Thus, $\nu_3(\beta_1) = 3$. Since $v_3(\beta_j) \ge \nu_3(3^{2(j-1)}/j!)\ge 4$ for $j\ge 4$, we only need to calculate $\beta_2$ and $\beta_3$. We find that 
$$
\beta_2\equiv \frac{3^2}{2!} \left(\frac{T(2\cdot 3N+\ell)-2T(3N+\ell)+T(\ell)}{3^4}\right)\equiv 0\pmod {3^4}
$$
and 
$$
\beta_3\equiv \frac{3^4}{3!} \left(\frac{T(3 \cdot 3N+\ell)-3T(2\cdot 3 N+\ell)+3T(3N+\ell)-T(\ell)}{3^6}\right) \equiv 0\pmod {3^6}.
$$
We conclude that 
\begin{align*}
\nu_3(T(n)) & =  \nu_3\left(3^2\left(\sum_{k\ge 0} \beta_k z^k\right)\right)\\
&=\nu_3(3^2\beta_0z^k)\\
& =  5+\nu_3((n+\ell)/39)\\
&=4+\nu_3(n+\ell),
\end{align*}
which completes the proof of this theorem.
\end{proof}

For the primes $p\in \{11,103,163\}$ not covered by Theorem
\ref{thm5to283}, as we previously said, our method does not handle
$11$. As for $p\in\{103,163\}$, a computer calculation found that for such
primes whenever $\ell\in \{0,1,\ldots,N-1\}$ is such that condition
\eqref{efirstcond} is satisfied, then \eqref{egpzt} holds but
\eqref{econdbzer} fails, so our method could not conclude. Similarly,
for the primes $p \in \{11,47,53,103,163\}$ which are not covered by
Theorem \ref{thm5to283rat}, our methods fail. Again, $11$ is excluded
and for $p \in \{103, 163\}$ we cannot conclude for the same
reason. Moreover, $p  \in \{47, 53\}$ suffer the problem that modulo $N$, $-17$ is congruent to either $1/3$ or $-5/3$, and so congruences modulo $p$ are too weak to conclude.

\section{Conjectures and Heuristics}
\label{sheur}
Let ${\mathcal ML}$ and ${\mathcal NMLR}$ be the subsets of primes $p$ such that Conjecture \ref{conml} holds and Conjecture \ref{conmlrat} fails, respectively. We propose the following conjecture.

\begin{conjecture} Both subsets ${\mathcal ML}$ and ${\mathcal NMLR}$ are infinite. In fact, they are both of positive lower density as subsets of the set of all primes.
\end{conjecture}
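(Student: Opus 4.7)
The plan is to reduce both density statements to a probabilistic model for the residues of the Tribonacci sequence modulo $p$ and $p^2$, then combine that model with Chebotarev-type density inputs for the splitting of $P(X)$. By Theorem~\ref{thstatements}, both conjectures reduce to statements about zeros of the interpolating functions $f_\ell(z)$ in $\Z_p$, and the sufficient criteria of Theorems~\ref{thm:NoML} and~\ref{thsuffml} show that membership of $p$ in ${\mathcal ML}$ or ${\mathcal NMLR}$ is determined entirely by the finite data $T(\ell)\bmod p$ and $T(\ell+N_p)\bmod p^2$ for $\ell \in \{0,\ldots,N_p-1\}$. The splitting types $d\in\{1,2,3\}$ occur with densities $1/6$, $1/2$, $1/3$ (since $\operatorname{Gal}(\Q(\Lambda)/\Q)=S_3$ as $\operatorname{disc} P=-44$), and $N_p$ divides $p-1$, $p^2-1$, $p^2+p+1$ respectively. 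The model treats the $N_p-4$ residues $T(\ell)\bmod p$ for $\ell\not\equiv 0,-1,-4,-17\pmod{N_p}$ as i.i.d.\ uniform in $\mathbb F_p$, with $T(\ell+N_p)\bmod p^2$ conditionally uniform on the appropriate fibre when $p\mid T(\ell)$.

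For ${\mathcal NMLR}$, restrict to primes with $d\in\{2,3\}$ and $N_p$ of maximal order $\asymp p^d$; these form a set of density $\ge 5/6 - o(1)$. In the model, the count of ``extra'' $\ell$ with $p\mid T(\ell)$ is approximately Poisson with mean $\gg p^{d-1}\to\infty$. By Theorem~\ref{thm:NoML}, each extra $\ell$ certifies $p\in{\mathcal NMLR}$ unless either \eqref{egpzt} or \eqref{econdbq} fails, an event of probability $O(1/p)$. Hence the chance that no extra $\ell$ works is at most $(O(1/p))^{p^{d-1}}=o(1)$, and ${\mathcal NMLR}$ heuristically contains all but $o(1)$ of the $d\ge 2$ primes, comfortably giving positive lower density.

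For ${\mathcal ML}$, note first that Theorem~\ref{ththird} and the remark following its proof force us to exclude all primes with $\Lambda\subset\Q_p$ and $3\nmid N_p$. We therefore restrict to primes of type $d=1$ with $3\mid N_p$, a subset of $\{p:d=1\}$ of positive relative density by Chebotarev applied to the tower $\Q(\Lambda,\zeta_3)/\Q$. For such primes $N_p\le p-1$, so the expected number of extras is at most $1-4/p$, and a Poisson heuristic gives probability $\ge e^{-1}+o(1)$ that there are none. When this happens, Theorem~\ref{thsuffml} certifies $p\in{\mathcal ML}$ as long as \eqref{egpzt} holds at each of the four forced $\ell\in\{0,N-1,N-4,N-17\}$, which in the model happens with probability $1-O(1/p)$. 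Multiplying gives a lower density bound of roughly $e^{-1}/12$.

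The main obstacle to turning any of this into a theorem is equidistribution. The model requires that $T(\ell)\bmod p^2$ be effectively uniform as $\ell$ ranges over a period, which is a statement about the orbit of an explicit element of $\operatorname{GL}_3(\Z/p^2)$ and seems to need substantial character-sum or sieve input. Already the supporting claim ``$N_p \asymp p^d$ for a positive proportion of primes of type $d$'' is a Tribonacci analogue of Artin's primitive root conjecture, which in the classical case is open unconditionally. A secondary obstacle, specific to ${\mathcal ML}$, is to control possible correlations between the ``no extras'' event and the structural conditions $d=1$, $3\mid N_p$: imposing $3\mid N_p$ already constrains the Tribonacci recurrence modulo cubic characters of $(\calO/p)^\times$, and a careful accounting is needed to rule out systematic shifts in the number of extras. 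A realistic rigorous attempt would presumably proceed under GRH, combining Chebotarev over a tower capturing both $\Lambda$ and the order of $\langle\Lambda\rangle$ in $(\calO/p^2)^\times$ with character-sum estimates for the sparse event $p\mid T(\ell)$.
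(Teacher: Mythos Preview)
This statement is a conjecture, and the paper does not prove it either; Section~\ref{sheur} offers only heuristics. Your proposal is likewise heuristic, and you correctly flag this in your final paragraph. So the comparison is between two heuristic arguments rather than two proofs.

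The two heuristics are in the same spirit but make different choices. For ${\mathcal NMLR}$, the paper restricts to primes with $d=3$ and $p\equiv 2\pmod 3$, then assumes that $P(p^2+p+1)>p^{1.6}$ for a positive proportion of such primes; combined with an Erd\Humlaut{o}s--Murty argument this forces $N>p^{1.6}$, after which an equidistribution theorem from~\cite{EPSW} gives $(1+o(1))N/p$ residues $\ell$ with $p\mid T(\ell)$, overwhelmingly many of which should satisfy \eqref{egpzt} and \eqref{econdbq}. You instead take all $d\ge 2$ primes with $N$ near maximal and run a pure Poisson model. (Minor slip: for $d=3$ one has $N\mid p^2+p+1$, so the maximal order is $\asymp p^2$, not $p^d=p^3$.) The paper's route has the advantage of invoking at least one actual theorem (the count from~\cite{EPSW}) and isolating the unproven input as a single smoothness hypothesis about $p^2+p+1$, whereas your route rests entirely on the probabilistic model.

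For ${\mathcal ML}$ the approaches diverge more. The paper fixes a large integer~$k$ and, via Chebotarev for the splitting field of $P(X^k)$, produces a set of density $\ge 1/(k^2\phi(k))$ of primes with $N\le (p-1)/k$; the expected number of extra $\ell$ with $p\mid T(\ell)$ is then $\le 1/k$, which can be made as small as one likes. You instead keep all $d=1$ primes with $3\mid N$ and accept a Poisson mean near~$1$, trading a smaller success probability ($\approx e^{-1}$) for a much larger base set. Your restriction $3\mid N$ is well motivated by Theorem~\ref{ththird}, but the claim that ``$d=1$ and $3\mid N$'' has positive density does not follow from Chebotarev over $\Q(\Lambda,\zeta_3)$ alone: the condition $3\mid N$ asks whether the $\lambda_i$ are cubes in $(\Z/p)^\times$, which requires adjoining $\lambda^{1/3}$ to the tower. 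This is heuristically harmless but is one more place where an Artin-type input would be needed for a genuine proof.
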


We conclude by offering some heuristics to support our conjecture. Let $k$ be a large positive integer. The splitting field of the polynomial
$$
g(X)=f(X^k)=X^{3k}-X^{2k}-X^k-1
$$
is ${\mathbb L}_k={\mathbb Q}(\sqrt[k]{\alpha},\sqrt[k]{\beta},\zeta_k)$, where $\zeta_k$ is some primitive root of unity  of order $k$. The degree of ${\mathbb L}_k$ is at most $k^2\phi(k)$, where $\phi(k)$ is the Euler function of $k$. By the Chebotarev Density Theorem, the primes such that $p\equiv 1\pmod k$ and also $\alpha^{(p-1)/k}\equiv \beta^{(p-1)/k} \equiv 1\pmod p$ form a set of density which is at least $1/(k^2\phi(k))$. For such primes, $N\mid (p-1)/k$, so $N$ is small. Since $N\le (p-1)/k$ a proportion of only about $1/k$ residues modulo $p$ (at most) are in the image of $\{T_\ell\pmod p: 0\le \ell\le N-1\}$ which suggests that the probability of having an additional  zero modulo $p$; i.e., a positive integer $\ell$ such that $T_\ell\equiv 0\pmod p$ and $\ell\not\equiv 17,-4,-1,0\pmod p$ should be at most $1/k$. Thus, for a positive proportion of such primes, maybe at least $(k-1)/(k^3\phi(k))$ of them, 
have the property that $T_\ell\equiv 0\pmod p$ implies $\ell\equiv -17,-4,-1,0\pmod p$. For such primes, $f_{\ell}(0)=0$, so 
$$
f_{\ell}z=p\sum_{k=1}^{\infty} \beta_k z^k.
$$
If additionally \eqref{egpzt} is satisfied, so $\beta_1\not\equiv 0\pmod p$, which we conjecture it happens for most such primes, then we would get that $\nu_p(T_n)=0$ provided $n\not\equiv -17,-4,-1,0\pmod p$ and 
$\nu_p(T_n)=1+\nu_p(n-c)$ for $n\equiv c\pmod {p}$, with $c\in \{-17,-4,-1,0\}$. This heuristic suggests that ${\mathcal ML}$ is infinite and of positive lower density.

For ${\mathcal NMLR}$ let $p$ be a prime such that $p\equiv 2\pmod 3$ and $f(X)\pmod p$ is irreducible. By the Chebotarev Density Theorem the set of such primes has density $1/6$. For them 
$N\mid p^2+p+1$. Let $P(m)$ be the largest prime factor of the positive integer $m$. For each fixed $u\in (0,1)$, the positive integers $n$ such that $P(n)\le n^{u}$ are called 
{\it smooth}. It is known that the set of smooth numbers has a density $\rho(u)$, where $\rho$ is  the Dickman function. It is conjectured that numbers of the form 
$g(p)$ where $g(X)$ is some irreducible polynomial should behave like random integers with respect to smoothness and in particular that $P(g(p))>g(p)^{u}$ should hold for a positive proportion of primes $p$, but this has only been proved for linear polynomials $g(X)$ and values of $u$ not very close to $1$ (for example, Fouvry \cite{Fou} proved that  for any nonzero integer $a$ the inequality $P(p-a)>p^{0.67}$ holds for a positive proportion of primes $p$). So, let us assume that there is a positive proportion of primes $p$ such $f(X)$ is irreducible modulo $p$ and $P(p^2+p+1)>p^{1.6}$. Let $p$ be such a prime and let $q=P(p^2+p+1)$. Then $N\mid p^2+p+1$. If 
$q\nmid N$, then $N\mid (p^2+p+1)/q<p^{0.4}$. However, an argument of Erd\Humlaut{o}s and Murty from \cite{EM} 
shows that for any positive real number $X$ the number of primes $p\le X$ which divide $N_{{\mathbb K}/{\mathbb Q}} (\alpha^k)$ for some $k\le X^{0.4}$ is $O(X^{0.8})$ 
which is $o(\pi(X))$ as $X\to\infty$. This shows that for most of our primes $p$ (namely, $p\equiv 2\pmod 3$, $f(X)\pmod p$ is irreducible and $P(p^2+p+1)>p^{1.6}$), we have that $q\mid N$. In particular, $N>p^{1.6}$. Now Theorem 7.2 in \cite{EPSW} tells us that 
$$
\#\{0\le \ell\le N-1: T_\ell\equiv 0\pmod p\}=\frac{N}{p}+O(p^{1/2})=(1+o(1))\frac{N}{p}.
$$
Thus, there are many $\ell$ in $[0,N-1]$ with $T_{\ell}\equiv 0\pmod p$. Of these not all might create $p$-adic zeros since for example, it might happen that $(T_{N+\ell}-T_{\ell})/p\equiv 0\pmod p$, or even if this number is nonzero modulo $p$,  it might be that \eqref{econdbq} is not satisfied. However, since we have no reason to believe that the above numbers are anything but random modulo $p$, we assume that the first condition fails with probability $1/p$ and the second one fails with probability $6/p$, getting in this way that the number of $\ell\in [0,N-1]$
such that $\ell\not\equiv -17,-4,-1,0\pmod p$ and both conditions \eqref{egpzt} and \eqref{econdbq} hold is $(1+o(1))N/p+O(N/p^2)=(1+o(1))N/p$. So, for most of such primes Conjecture \ref{conmlrat}
would fail, which suggests that ${\mathcal NMLR}$ is of positive lower density. 

\section*{Acknowledgements}

We thank Keith Conrad for helpful advice. Yu. B. and F. L. worked on this paper during a visit at the MPI-SWS in Spring 2022. These authors thank this institution for hospitality and support. Yu. B. was also supported in part by the ANR project JINVARIANT. 

\footnotesize{

\bibliographystyle{amsplain}
\bibliography{tribo}
}

\end{document}